\def\EMAIL#1{\href{mailto:#1}{#1}}  
\newcommand{\R}{\mathbb{R}}
\newcommand{\set}[1]{\left\{#1\right\}}
\newcommand{\sets}[1]{\{#1\}}
\newcommand{\norms}[1]{\Vert#1\Vert}
\newcommand{\iprods}[1]{\langle #1\rangle}
\newcommand{\Eproof}{\hfill $\square$}
\newcommand{\dom}[1]{\mathrm{dom}(#1)}
\newcommand{\zero}[1]{{\boldsymbol{0}}}
\newcommand{\Expsk}[2]{\mathbb{E}_{#1}\big[#2\big]}
\newcommand{\Expsn}[2]{\mathbb{E}_{#1}[#2]}
\newcommand{\Expn}[1]{\mathbb{E}[#1]}
\newcommand{\Expk}[1]{\mathbb{E}\big[#1\big]}
\newcommand{\zer}[1]{\mathrm{zer}(#1)}
\newcommand{\gra}[1]{\mathrm{gra}(#1)}
\newcommand{\Hc}{\mathcal{H}}
\newcommand{\Bc}{\mathcal{B}}
\newcommand{\Sc}{\mathcal{S}}
\newcommand{\Gc}{\mathcal{G}}
\newcommand{\Lc}{\mathcal{L}}
\newcommand{\Qc}{\mathcal{Q}}
\newcommand{\Tc}{\mathcal{T}}
\newcommand{\Fc}{\mathcal{F}}
\newcommand{\Ec}{\mathcal{E}}
\newcommand{\Rc}{\mathcal{R}}
\newcommand{\BigO}[1]{\mathcal{O}\left(#1\right)}
\newcommand{\BigOs}[1]{\mathcal{O}\big(#1\big)}
\newcommand{\SmallO}[1]{o\left(#1\right)}
\newcommand{\SmallOs}[1]{o\big(#1\big)}
\newcommand{\mbf}[1]{\mathbf{#1}}
\newcommand{\mbb}[1]{\mathbb{#1}}
\newcommand{\myeq}[2]{\vspace{-0.25ex}\begin{equation}\label{#1}#2\vspace{-0.25ex}\end{equation}}
\newcommand{\myeqn}[1]{\vspace{-0.25ex}\begin{equation*}#1\vspace{-0.25ex}\end{equation*}}
\newcommand{\myeqc}[1]{ {\scriptsize\textcircled{#1}} }
\newcommand{\beforesec}{\vspace{-1ex}}
\newcommand{\aftersec}{\vspace{-1ex}}
\newcommand{\beforesubsec}{\vspace{-1ex}}
\newcommand{\aftersubsec}{\vspace{-1ex}}
\begin{document}

\RUNAUTHOR{Quoc Tran-Dinh}

\RUNTITLE{Variance-Reduced Fast Krasnoselkii-Mann Methods for Finite-Sum Root-Finding Problems}

\TITLE{Variance-Reduced Fast Krasnoselkii-Mann Methods for Finite-Sum Root-Finding Problems}

\ARTICLEAUTHORS{%
\AUTHOR{Quoc Tran-Dinh}
\AFF{Department of Statistics and Operations Research \\ The University of North Carolina at Chapel Hill (UNC), 318-Hanes Hall, Chapel Hill, NC27599-3260, USA.\\ 
\EMAIL{quoctd@email.unc.edu}}
} 

\ABSTRACT{
We develop a new class of fast Krasnoselkii--Mann methods with variance reduction to solve a finite-sum co-coercive equation $Gx = 0$. 
Our algorithm is single-loop and leverages a new family of unbiased variance-reduced estimators specifically designed for a wider class of root-finding algorithms.
Our method achieves both $\BigOs{1/k^2}$ and $\SmallOs{1/k^2}$ last-iterate convergence rates in terms of $\Expn{\norms{Gx^k}^2}$, where $k$ is the iteration counter and $\Expn{\cdot}$ is the total expectation.
We also establish almost sure $\SmallOs{1/k^2}$ convergence rates and the almost sure convergence of iterates $\sets{x^k}$ to a solution of $Gx=0$.
We instantiate our framework for two prominent estimators: SVRG and SAGA.
By an appropriate choice of parameters, both variants attain an oracle complexity of $\BigOs{n + n^{2/3}\epsilon^{-1}}$ to reach an $\epsilon$-solution, where $n$ represents the number of summands in the finite-sum operator $G$.
Furthermore, under $\sigma$-strong quasi-monotonicity, our method achieves a linear convergence rate and an oracle complexity of $\BigOs{n+ \max\sets{n, n^{2/3}\kappa} \log(\frac{1}{\epsilon})}$, where $\kappa := L/\sigma$.
We extend our approach to solve a class of finite-sum inclusions (possibly nonmonotone), demonstrating that our schemes retain the same theoretical guarantees as in the equation setting. 
Finally, numerical experiments validate our algorithms and demonstrate their promising performance compared to state-of-the-art methods.
}

\KEYWORDS{
Variance reduction method;
fast Krasnoselkii--Mann method;
Nesterov's accelerated method;
finite-sum co-coercive equation;
finite-sum inclusion.
}
\MSCCLASS{90C25, 90C06, 90-08}
\ORMSCLASS{Operations research; mathematical programming; game theory}

\maketitle

\vspace{-2ex}
\beforesec
\section{Introduction.}\label{sec:intro}
\aftersec
This paper develops and analyzes a new class of \textit{\textbf{stochastic fast}} Krasnoselkii-Mann (KM) methods with \textbf{\textit{variance reduction}} to approximate solutions of finite-sum ``co-coercive'' equations and its application to finite-sum inclusions.

\beforesubsec
\subsection{Problem statement.}\label{subsec:prob_stats}
\aftersubsec
Our first goal in this paper is to solve the following  finite-sum ``co-coercive" equation by novel stochastic methods:
\myeq{eq:ME}{
\arraycolsep=0.2em
\begin{array}{lcl}
\textrm{Find $x^{\star}\in\dom{G}$ such that:}~ Gx^{\star} = 0, \quad\textrm{where}\quad Gx := \frac{1}{n}\sum_{i=1}^n G_ix.
\tag{CE}
\end{array}
}
Here, $G_i : \R^p \to \R^p$ are single-valued mappings for all $i \in [n] := \sets{1,\cdots, n}$ and $\dom{G} := \sets{x \in \R^p : Gx \neq\emptyset}$ is the domain of $G$.
Throughout this paper, we assume that

\begin{assumption}\label{as:A1}
\begin{itemize}
\item[\hspace{-1ex}$\mathrm{(a)}$] Equation \eqref{eq:ME} has a solution, i.e., $\zer{G} := \sets{x^{\star} \in \dom{G} : Gx^{\star} = 0} \neq\emptyset$.
\item[$\mathrm{(b)}$]
 $G_i$ for all $i \in [n]$ satisfy the following ``co-coercivity-type'' condition:
There exists $L \in (0, +\infty)$ such that for all $x, y \in \dom{G}$, we have
\myeq{eq:co-coerciveness_of_G}{
\arraycolsep=0.2em
\begin{array}{lcl}
\frac{1}{n}\sum_{i=1}^n\iprods{G_ix - G_iy, x - y} & \geq & \frac{1}{L} \big[ \frac{1}{n} \sum_{i=1}^n \norms{G_ix - G_iy}^2 \big], \quad 
\end{array}
}
\end{itemize}
\end{assumption}
Note that \eqref{eq:co-coerciveness_of_G} implies the $\frac{1}{L}$-co-coercivity of $G$, i.e. $\iprods{Gx - Gy, x - y} \geq \frac{1}{L}\norms{Gx - Gy}^2$.
Conversely, if $G$ is $\frac{1}{L}$-co-coercive, then it may not satisfy \eqref{eq:co-coerciveness_of_G}.
However, if each $G_i$ is $\frac{1}{L}$-co-coercive for $i \in [n]$, then $G$ satisfies  \eqref{eq:co-coerciveness_of_G}.
Thus \eqref{eq:co-coerciveness_of_G} is stronger than the $\frac{1}{L}$-co-coercivity of $G$ but weaker than the $\frac{1}{L}$-co-coercivity of all the summands $G_i$.

\beforesubsec
\subsection{Proposed method.}\label{subsec:proposed_method}
\aftersubsec
Our method builds upon the following deterministic Nesterov's accelerated KM method, inspired by our recent work \cite{tran2022connection} (see also \cite{attouch2019convergence,bot2022bfast}), for co-coercive equations of the form \eqref{eq:ME}:
\textit{Given $x^0 \in \dom{G}$, we set $x^{-1} := x^0$, and at each iteration $k \geq 0$, we update
\myeq{eq:KM_method}{
x^{k+1} := x^k + \theta_k(x^k - x^{k-1}) - \eta_kS^k, \quad \textrm{where} \quad S^k := Gx^k - \gamma_kGx^{k-1},
}
where $\theta_k \in (0, 1)$ is an inertia or a momentum  parameter, $\eta_k > 0$ is a given stepsize, and $\gamma_k \in (0, 1)$ is a given correction parameter.
}

Since we aim to solve large finite-sum instances of \eqref{eq:ME} (i.e. when $n \gg 1$), where exactly evaluating $S^k$ is often costly or even infeasible, we approximate $S^k$ using its stochastic unbiased estimator $\widetilde{S}^k$ with variance reduction.

\textit{\textbf{Our method.}}
Having $\widetilde{S}^k$, we propose the following method to solve \eqref{eq:ME}:
\textit{
Given $x^0 \in \dom{G}$, we set $x^{-1} := x^0$ and at each iteration $k \geq 0$, we construct an unbiased variance-reduced estimator $\widetilde{S}^k$ of $S^k := Gx^k - \gamma_kGx^{k-1}$ and then update
\myeq{eq:VRKM4ME}{
x^{k+1} := x^k + \theta_k(x^k - x^{k-1}) - \eta_k\widetilde{S}^k,
\tag{VFKM}
}
where $\theta_k \in (0, 1)$, $\eta_k > 0$, and $\gamma_k \in (0, 1)$ are given, determined later, and $\widetilde{S}^0 := S^0$.
}

We do not explicitly specify the form of $\theta_k$, $\eta_k$, and $\gamma_k$ as well as the construction of $\widetilde{S}^k$ here to avoid repetition.
Section~\ref{sec:VRS_Estimator} presents the construction of $\widetilde{S}^k$.
Sections~\ref{sec:AVFR4NE} and \ref{sec:star_monotone_convergence} provide explicit update rules for $\theta_k$, $\eta_k$, and $\gamma_k$, and then analyzes the convergence of \eqref{eq:VRKM4ME} for two different cases, respectively: without \textit{strong quasi-monotonicity} and with \textit{strong quasi-monotonicity}.

\beforesubsec
\subsection{Applications to finite-sum inclusions.}\label{subsec:applications_to_NI}
\aftersubsec
Our second goal in this paper is to extend our method \eqref{eq:VRKM4ME} to solve the following finite-sum inclusion:
\myeq{eq:MI}{
\textrm{Find $u^{\star}\in\dom{\Psi}$ such that:}~ 0 \in \Psi{u^{\star}} :=  Gu^{\star} + Tu^{\star},
\tag{NI}
}
where $Gu := \frac{1}{n}\sum_{i=1}^nG_iu$ as in \eqref{eq:ME} and but each $G_i$ is $\frac{1}{L_g}$-co-coercive for $i \in [n]$, and $T : \R^p \rightrightarrows 2^{\R^p}$ is a maximally $\nu$-co-hypomonotone operator, i.e.  there exists $\nu \geq 0$ such that $\iprods{x - y, u - v} \geq -\nu\norms{x - y}^2$ for all $(u, x), (v, y) \in \gra{T}$, where $2^{\R^p}$ is the set of all subsets of $\R^p$ and $\gra{T}$ is the graph of $T$ (see Section~\ref{sec:AVFR4NI} for more details).

Note that the class of problems covered by \eqref{eq:MI} is not necessarily monotone as shown in Section~\ref{sec:AVFR4NI}.
Our approach to solve \eqref{eq:MI} consists of the following two steps:
\begin{compactitem}
\item First, we reformulate \eqref{eq:MI} into \eqref{eq:ME} via a backward-forward splitting (BFS) equation, similar to \cite{davis2022variance}.
However, since $T$ is co-hypomonotone, we need to establish the relation between \eqref{eq:MI} and this equation and verify Condition~\eqref{eq:co-coerciveness_of_G}.
\item Next, we apply \eqref{eq:VRKM4ME} to find an approximate solution of the BFS equation and then recover the corresponding approximate solution of \eqref{eq:MI}. 
\end{compactitem}

\beforesubsec
\subsection{Our  contribution.}\label{subsec:contribution_related_work}
\aftersubsec
Together with the proposed method \eqref{eq:VRKM4ME}, our theoretical contributions in this paper consist of the following:
\begin{compactitem}
\item[(a)] 
In Section~\ref{sec:VRS_Estimator}, we introduce a new class of unbiased variance-reduced estimators $\widetilde{S}^k$ for $S^k$ in \eqref{eq:VRKM4ME}, as defined in Definition~\ref{de:ub_SG_estimator}.
Within this class, we construct two specific estimators: one based on the loopless SVRG method from \cite{SVRG,kovalev2019don} and the other leveraging the SAGA estimator from \cite{Defazio2014}.
However, any estimator satisfying Defintion~\ref{de:ub_SG_estimator} can be used in our method  \eqref{eq:VRKM4ME}.

\item[(b)] 
In Section~\ref{sec:AVFR4NE}, we establish that our method  \eqref{eq:VRKM4ME} achieves  an $\BigOs{1/k^2}$ last-iterate convergence rate for $\Expk{\norms{Gx^k}^2}$ and  $\Expk{\norms{x^{k+1} - x^k}^2}$ under Condition~\eqref{eq:co-coerciveness_of_G}, where $k$ is the iteration counter.
We further prove a $\SmallOs{1/k^2}$ convergence rates for these metrics.
Additionally, we establish an almost sure $\SmallO{1/k^2}$  convergence rate for $\norms{Gx^k}^2$ and $\norms{x^{k+1} - x^k}^2$, along with the almost sure convergence of the iterate sequence $\sets{x^k}$ to a solution  of \eqref{eq:ME}.

\item[(c)] 
In Subsection~\ref{subsec:SVRG_SAGA}, we present two variants of \eqref{eq:VRKM4ME} using our SVRG and SAGA estimators.
We show that both variants require $\mathcal{O}( n + n^{2/3}\epsilon^{-1} )$ evaluations of $G_i$ to achieve $\Expk{\norms{Gx^k}^2} \leq \epsilon^2$ for a given tolerance $\epsilon > 0$.
This yields an $n^{1/3}$ factor improvement over deterministic accelerated methods and a $1/\epsilon$ factor improvement over non-accelerated stochastic methods.

\item[(d)] 
In Section~\ref{sec:star_monotone_convergence},  we establish a linear convergence rate for \eqref{eq:VRKM4ME} under an additional $\sigma$-strongly quasi-monotone assumption on \eqref{eq:ME}.
When using our SVRG and SAGA estimators, the method attains an oracle complexity of $\BigOs{n+ \max\sets{n, n^{2/3}\kappa}\log(\epsilon^{-1})}$, improving over deterministic counterparts by a factor of $n^{1/3}$ whenever $\kappa \geq \BigOs{n^{1/3}}$, where $\kappa := \frac{L}{\sigma}$.

\item[(e)] 
We apply our method \eqref{eq:VRKM4ME} to solve a class of finite-sum inclusions \eqref{eq:MI}, which is possibly nonmonotone, and derive a new variant with the same theoretical convergence rates and oracle complexity bounds as in Section~\ref{sec:AVFR4NE}.
\end{compactitem}

We now briefly compare our contributions with existing works to highlight key differences.
First,  our unbiased variance-reduced estimator $\widetilde{S}^k$, including  SVRG and SAGA, is specifically designed  for $S^k$, not for $Gx^k$.
This differs from existing works such as \cite{alacaoglu2022beyond,alacaoglu2021stochastic,bot2019forward,davis2022variance}, which employ standard SVRG and SAGA estimators directly for $Gx^k$.
Second, our method \eqref{eq:VRKM4ME} is single-loop and using simple operations, making it easier to implement than double-loop or catalyst-based algorithms in \cite{khalafi2023accelerated,yang2020catalyst}.
Third, our convergence rates and oracle complexity estimates rely on the metric $\mathbb{E}\big[ \norms{Gx^k}^2 \big]$ or $\norms{Gx^k}^2$, which differs from existing results that use a gap or restricted gap function. 
Fourth, our method achieves an $\BigO{1/k}$ factor of improvement over non-accelerated methods in \cite{alacaoglu2022beyond,alacaoglu2021stochastic,davis2022variance}.
Additionally, we also establish almost sure $\SmallOs{1/k^2}$ convergence rates and show that the iterate sequence $\sets{x^k}$ converges almost surely to a solution  of \eqref{eq:ME}.
To our best knowledge, this is the first result on stochastic accelerated methods for root-finding problems.
Finally, our complexity matches the best-known results in convex optimization using SAGA or SVRG, without requiring enhancements such as restarting or nested strategies as, e.g., in \cite{Allen-Zhu2016,zhou2018stochastic}.

\beforesubsec
\subsection{Related work.}\label{subsec:related_work}
\aftersubsec
Both \eqref{eq:ME} and \eqref{eq:MI} are well-studied in the literature (e.g., \cite{Bauschke2011,reginaset2008,Facchinei2003,phelps2009convex,ryu2022large,ryu2016primer}). 
We focus on the most recent works relevant to our methods.

\textit{Accelerated methods.}
Nesterov's accelerated methods have been applied to solve both \eqref{eq:ME} and \eqref{eq:MI} in early works \cite{he2016accelerated,kolossoski2017accelerated} and \cite{attouch2019convergence}, followed by \cite{adly2021first,attouch2020convergence,attouch2022ravine,attouch2019convergence,chen2017accelerated,he2016accelerated,kim2021accelerated,mainge2021accelerated,park2022exact,tran2022connection}. 
Unlike convex optimization, extending these methods to \eqref{eq:ME} and \eqref{eq:MI} faces a fundamental challenge in constructing a suitable Lyapunov function. 
In convex optimization, the objective function serves this purpose, but it is absent in \eqref{eq:ME} and \eqref{eq:MI}. 
This necessitates a different approach for \eqref{eq:ME} and \eqref{eq:MI} (see, e.g., \cite{attouch2019convergence,mainge2021accelerated}). 
Our approach leverages similar ideas from dynamical systems as in \cite{attouch2019convergence,mainge2021accelerated}.

Alternatively, Halpern's fixed-point iteration  \cite{halpern1967fixed}  has recently been proven to achieve a better convergence rates (see \cite{diakonikolas2020halpern,lieder2021convergence,sabach2017first}), matching Nesterov's schemes. 
Starting from a seminal work \cite{yoon2021accelerated}, which extended Halpern's iteration to the extragradient method, many subsequent works have exploited this idea for other methods  (e.g., \cite{cai2022accelerated,cai2022baccelerated,lee2021fast,park2022exact,tran2023extragradient,tran2022connection,tran2021halpern}). 
Recently, \cite{tran2022connection} establishes a connection between Nesterov's and Halpern's accelerations for solving \eqref{eq:ME} with different schemes.
Nevertheless, all of these results are primarily developed for deterministic methods.

\textit{Stochastic approximation methods.}
Stochastic methods for both \eqref{eq:ME} and \eqref{eq:MI} and their special cases have been extensively studied, e.g., in \cite{juditsky2011solving,kotsalis2022simple,pethick2023solving}. 
Some methods exploit mirror-prox and averaging techniques such as \cite{juditsky2011solving,kotsalis2022simple}), while others rely on projection or extragradient-type schemes (e.g., \cite{bohm2022two,cui2021analysis,iusem2017extragradient,kannan2019optimal,mishchenko2020revisiting,pethick2023solving,yousefian2018stochastic}). 
Many algorithms use standard Robbins-Monro stochastic approximation with large or increasing batch sizes to achieve a variance reduction. 
Other works generalize the analysis to a broader class of algorithms (e.g., \cite{beznosikov2023stochastic,gorbunov2022stochastic,loizou2021stochastic}), covering both standard stochastic and variance-reduction methods.
However, their complexity is generally worse than methods based on control variate techniques as in our work.

\textit{Variance-reduction methods.}	
Control variate variance-reduction techniques are widely used in optimization, leading to the development of various estimators, including SAGA \cite{Defazio2014}, SVRG \cite{SVRG}, SARAH \cite{nguyen2017sarah}, and Hybrid-SGD \cite{Tran-Dinh2019a}. 
These estimators have been adopted in methods for solving \eqref{eq:ME} and \eqref{eq:MI}.
For instance,  \cite{davis2016smart,davis2022variance} proposed a SAGA-type method for \eqref{eq:ME} and \eqref{eq:MI}, under a ``star'' co-coercivity and strong quasi-monotonicity -- assumptions closely related to our work. 
Nevertheless, our focus is on accelerated methods that achieve improved convergence rates and complexity.
Similarly, \cite{alacaoglu2022beyond,alacaoglu2021stochastic} employed SVRG estimators for methods related to \eqref{eq:MI}, but these algorithms are non-accelerated.
Other relevant works include \cite{bot2019forward,carmon2019variance,chavdarova2019reducing,huang2022accelerated,palaniappan2016stochastic,yu2022fast}, some of which focused on minimax problems or bilinear matrix games.
More recently, variance-reduced methods based on Halpern’s fixed-point iteration have been explored (e.g., \cite{cai2023variance,cai2022stochastic}), leveraging SARAH to achieve improved complexity. 
While this approach offers advantages in certain settings, our work takes a different path by focusing on Nesterov's  acceleration combined with variance-reduction techniques.
In addition, we prove stronger and new convergence results compared to these papers.
  
\beforesubsec
\subsection{Notations.}\label{subsec:back_ground}
\aftersubsec
We work with a finite-dimensional space $\R^p$ equipped with standard inner product $\iprods{\cdot,\cdot}$ and norm $\norms{\cdot}$.
For a single-valued or multivalued operator $F$ on $\R^p$, $\dom{F} := \sets{x \in \R^p : Fx\neq\emptyset}$ denotes its domain, $\gra{F} := \sets{(x, z) \in \dom{F}\times\R^p : z \in Fx}$ denotes its graph, and $J_Fx := \sets{u \in \dom{F} : x \in u + Fu}$ denotes its resolvent.
For a symmetric matrix $\mbf{X}$, $\lambda_{\min}(\mbf{X})$ and $\lambda_{\max}(\mbf{X})$ stand for the smallest and largest eigenvalues of $\mbf{X}$, respectively.

We use $\Fc_k$ to denote the $\sigma$-algebra generated by all the randomness arising from the algorithm, including $x^0,\cdots, x^k$, up to the iteration $k$.
$\Expsn{k}{\cdot} = \Expn{\cdot \mid \Fc_k}$ denotes the conditional expectation w.r.t. $\Fc_k$, and $\Expn{\cdot}$ is the total expectation. 
We also use $\BigO{\cdot}$ and $\SmallO{\cdot}$ to characterize convergence rates and oracle complexity as usual.
 
\beforesubsec
\subsection{Paper organization.}\label{subsec:paper_organization}
\aftersubsec
The rest of this paper is organized as follows.
Section~\ref{sec:VRS_Estimator} introduces a new class of  unbiased variance-reduced estimators for $S^k$.
It also constructs two instances: SVRG and SAGA, and proves their key properties.
Section~\ref{sec:AVFR4NE} establishes sublinear  convergence rates and oracle complexity of \eqref{eq:VRKM4ME}.
Section~\ref{sec:star_monotone_convergence} proves a linear convergence rate of \eqref{eq:VRKM4ME} under strong quasi-monotonicity.
Section~\ref{sec:AVFR4NI} derives a new variant to solve \eqref{eq:MI} and proves its convergence.
Section~\ref{sec:num_experiments} presents two numerical experiments.
Technical proofs  are deferred to the appendix.
  
\section{Unbiased Variance-Reduced Estimators for $S^k$.}\label{sec:VRS_Estimator}
Most existing variance reduction methods in the literature directly construct an estimator for $Gx^k$.
In this paper, we depart from those methods and construct a class of  estimators $\widetilde{S}^k$ for the intermediate quantity $S^k = Gx^k - \gamma_kGx^{k-1}$ defined in \eqref{eq:VRKM4ME}.

\beforesubsec
\subsection{A class of stochastic variance-reduced estimators for $S^k$.}\label{subsec:SG_estimators}
\aftersubsec
We introduce the following class of unbiased variance-reduced estimators for $S^k$ in \eqref{eq:VRKM4ME}.

\begin{definition}
\label{de:ub_SG_estimator}
For given $\sets{x^k}$ and  $S^k$ in \eqref{eq:VRKM4ME}, a stochastic estimator $\widetilde{S}^k$ adapted to the filtration $\sets{\Fc_k}$  is called an \textit{unbiased variance-reduced estimator} of $S^k$ if there exist  $\rho \in (0, 1]$,  $\Theta > 0$, and $\hat{\Theta} \geq 0$, and a nonnegative random sequence $\sets{\Delta_k}$ such that for all $k\geq 1$, almost surely:
\myeq{eq:ub_SG_estimator}{
\arraycolsep=0.2em
\left\{ \begin{array}{lcl}
\Expn{ \widetilde{S}^k - S^k \mid \Fc_k } & = & 0,   \vspace{1ex}\\ 
\Expn{ \norms{ \widetilde{S}^k - S^k}^2 \mid \Fc_k } & \leq & \Expn{\Delta_k \mid \Fc_k},   \vspace{1ex}\\ 
\frac{1}{(1-\gamma_k)^2}\Expn{\Delta_k \mid \Fc_k} &\leq & \frac{1 - \rho}{(1-\gamma_{k-1})^2}\Delta_{k-1} + \frac{\Theta  }{ (1-\gamma_k)^2 }  U_k +  \frac{\hat{\Theta} }{(1-\gamma_{k-1})^2} U_{k-1},
\end{array}\right.
}
where  $x^{-1} = x^0$ and $U_k := \frac{1}{n} \sum_{i=1}^n \norms{G_ix^k - G_ix^{k-1} }^2$.
\end{definition}

Our recursive bound in the last line of \eqref{eq:ub_SG_estimator} depends on three consecutive points $x^{k-2}$, $x^{k-1}$, and $x^k$, which is different from previous works, including \cite{alacaoglu2021forward,beznosikov2023stochastic,davis2022variance,driggs2019accelerating}.
In addition, the factor $\frac{1}{(1-\gamma_k)^2}$ in this bound also plays an important role in accelerated algorithms as it varies w.r.t. $k$.
In the following subsections, we will construct two estimators that satisfy Definition~\ref{de:ub_SG_estimator} using SVRG \cite{SVRG} and SAGA \cite{Defazio2014}, respectively.
However, any other estimator satisfying Definition~\ref{de:ub_SG_estimator} can be used in our method \eqref{eq:VRKM4ME}, e.g., SEGA in \cite{hanzely2018sega} and JacSketch in \cite{gower2021stochastic}.

\beforesubsec
\subsection{The loopless SVRG estimator for $S^k$.}\label{subsec:SVRG_estimator}
\aftersubsec
Consider an i.i.d. mini-batch $\Bc_k \subseteq [n]$ with a fixed size $b := \vert \Bc_k\vert$.
Let $G_{\Bc_k}x := \frac{1}{b}\sum_{i\in\Bc_k}G_ix$ be a standard mini-batch estimator of $Gx$ for a given $x \in \dom{G}$.
Given $\sets{x^k}$ from \eqref{eq:VRKM4ME}, we define 
\myeq{eq:SVRG_estimator}{
\begin{array}{lcl}
\widetilde{S}^k & := &   (1-\gamma_k)\big( Gw^k  - G_{\Bc_k}w^k \big) + G_{\Bc_k}x^k - \gamma_k G_{\Bc_k}x^{k-1}, 
\end{array}
}
where, for all $k\geq 1$, the reference or snapshot point $w^k$ is updated as follows:
\myeq{eq:SG_estimator_w}{
w^k := \left\{\begin{array}{ll}
x^{k-1} &\text{with probability $\mbf{p}$}  \vspace{1ex}\\ 
w^{k-1} & \text{with probability $1 - \mbf{p}$}.
\end{array}\right.
}
The probability $\mbf{p} \in (0, 1)$ is often small.
Similar to  \cite{kovalev2019don}, we call $\widetilde{S}^k$ in \eqref{eq:SVRG_estimator} a loopless-SVRG estimator, but it is still different from  ones in \cite{davis2022variance,SVRG,kovalev2019don}.

Next, we state the following key property, whose proof is given in Appendix~\ref{apdx:le:SVRG_estimator}.

\begin{lemma}\label{le:SVRG_estimator}
Let $\sets{x^k}$ and $S^k = Gx^k - \gamma_k Gx^{k-1}$ be defined in \eqref{eq:VRKM4ME}, $\widetilde{S}^k$ be constructed by the loopless-SVRG estimator \eqref{eq:SVRG_estimator}, and $w^k$ be given in \eqref{eq:SG_estimator_w}.
Define
\myeq{eq:SVRG_Deltak}{
\begin{array}{lcl}
\Delta_k &:= &  \frac{1}{nb} \sum_{i=1}^n \norms{ G_ix^k - \gamma_k G_ix^{k-1} - (1-\gamma_k) G_{i}w^k }^2.
\end{array}
}
Then, for all $k\geq 1$, we have 
\myeq{eq:SVRG_vr_properties}{
\arraycolsep=0.2em
\left\{\begin{array}{lcl}
\Expsn{k}{\widetilde{S}^k - S^k} & = & 0,   \vspace{1ex}\\ 
\Expsn{k}{ \norms{\widetilde{S}^k - S^k }^2 } & \leq &  \Expsn{k}{ \Delta_k }, \vspace{1ex}\\
\frac{1}{(1-\gamma_k)^2} \Expsn{k}{  \Delta_k } & \leq & \big( 1 - \frac{\mbf{p}}{2} \big) \frac{1}{(1-\gamma_{k-1})^2}  \Delta_{k-1}   +  \frac{4-6\mbf{p} + 3\mbf{p}^2}{ b \mbf{p}(1-\gamma_k)^2 }  U_k +  \frac{2(2-3\mbf{p} + \mbf{p}^2) \gamma_{k-1}^2}{b \mbf{p} (1-\gamma_{k-1})^2 }  U_{k-1}.
\end{array}\right.
\hspace{-3.5ex}
}
Consequently, the estimator $\widetilde{S}^k$ satisfies Definition~\ref{de:ub_SG_estimator} with  $\rho := \frac{\mbf{p}}{2} \in (0, 1)$, $\Theta := \frac{ 4 - 6\mbf{p} + 3\mbf{p}^2 }{b\mbf{p}}$, $\hat{\Theta} := \frac{2(2 - 3\mbf{p} + \mbf{p}^2)}{b \mbf{p} }$, and $\Delta_k$ in \eqref{eq:SVRG_Deltak}.
\end{lemma}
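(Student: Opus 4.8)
\textbf{Proof proposal for Lemma~\ref{le:SVRG_estimator}.}
The plan is to verify the three relations in \eqref{eq:SVRG_vr_properties} one at a time, exploiting the fact that $\Bc_k$ is sampled i.i.d.\ and independently of $w^k$ given $\Fc_k$. For the unbiasedness, I would compute $\Expsn{k}{G_{\Bc_k}y} = Gy$ for any $\Fc_k$-measurable $y$, apply this to $y \in \sets{w^k, x^k, x^{k-1}}$ in the definition \eqref{eq:SVRG_estimator}, and note that the terms $(1-\gamma_k)(Gw^k - \Expsn{k}{G_{\Bc_k}w^k}) = 0$ cancel, leaving $\Expsn{k}{\widetilde{S}^k} = Gx^k - \gamma_k Gx^{k-1} = S^k$. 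For the second-moment bound, I would write $\widetilde{S}^k - S^k = \frac{1}{b}\sum_{i\in\Bc_k} Z_i - \Expsn{k}{\frac{1}{b}\sum_{i\in\Bc_k}Z_i}$ where $Z_i := G_ix^k - \gamma_k G_ix^{k-1} - (1-\gamma_k)G_iw^k$, then use the standard sampling-without-replacement (or with-replacement) variance inequality $\Expsn{k}{\norms{\frac{1}{b}\sum_{i\in\Bc_k}Z_i - \bar Z}^2} \leq \frac{1}{nb}\sum_{i=1}^n\norms{Z_i}^2$ (possibly with the finite-population correction, which only helps), which is exactly $\Expsn{k}{\Delta_k}$ with $\Delta_k$ as in \eqref{eq:SVRG_Deltak}.

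The substantive part is the recursive inequality on $\frac{1}{(1-\gamma_k)^2}\Expsn{k}{\Delta_k}$. Here I would first take expectation over the coin flip \eqref{eq:SG_estimator_w}: conditioning on $\Fc_k$, with probability $\mbf{p}$ we have $w^k = x^{k-1}$ (so $G_iw^k = G_ix^{k-1}$ and the $i$-th summand becomes $\norms{G_ix^k - \gamma_k G_ix^{k-1} - (1-\gamma_k)G_ix^{k-1}}^2 = \norms{G_ix^k - G_ix^{k-1}}^2$, giving a $U_k$ contribution), and with probability $1-\mbf{p}$ we have $w^k = w^{k-1}$. So $\Expsn{k}{\Delta_k} = \frac{\mbf{p}}{b}U_k + (1-\mbf{p})\cdot\frac{1}{nb}\sum_i\norms{G_ix^k - \gamma_k G_ix^{k-1} - (1-\gamma_k)G_iw^{k-1}}^2$. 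The remaining task is to bound the last sum in terms of $\Delta_{k-1}$ (which involves $G_iw^{k-1}$, $G_ix^{k-1}$, $G_ix^{k-2}$ and $\gamma_{k-1}$) plus $U_k$ and $U_{k-1}$ terms. I would do this by adding and subtracting so that the argument matches $G_ix^{k-1} - \gamma_{k-1}G_ix^{k-2} - (1-\gamma_{k-1})G_iw^{k-1}$, writing the difference as a combination of $(G_ix^k - G_ix^{k-1})$ and $(G_ix^{k-1} - G_ix^{k-2})$ with $\gamma_k,\gamma_{k-1}$-dependent coefficients, then applying Young's inequality $\norms{a+b+c}^2 \le (1+s)\norms{a}^2 + (1 + \tfrac{2}{s} + \cdots)(\norms{b}^2+\norms{c}^2)$ with $s$ tuned so that the $\Delta_{k-1}$ coefficient comes out to $(1-\mbf{p})(1+s) \le 1 - \tfrac{\mbf{p}}{2}$, i.e.\ $s \approx \tfrac{\mbf{p}}{2}$; dividing through by $(1-\gamma_k)^2$ and carefully tracking the $\gamma$-factors then yields the stated coefficients $\frac{4-6\mbf{p}+3\mbf{p}^2}{b\mbf{p}}$ on $U_k$ and $\frac{2(2-3\mbf{p}+\mbf{p}^2)\gamma_{k-1}^2}{b\mbf{p}}$ on $U_{k-1}$.

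The main obstacle I anticipate is the bookkeeping in this last step: one must simultaneously (i) split off the correct $1 - \mbf{p}/2$ contraction factor, (ii) keep the $(1-\gamma_{k-1})^2$ normalization on the $\Delta_{k-1}$ and $U_{k-1}$ terms while the ambient factor is $(1-\gamma_k)^2$, and (iii) choose the Young's-inequality weights so the residual constants collapse to exactly $4-6\mbf{p}+3\mbf{p}^2$ and $2(2-3\mbf{p}+\mbf{p}^2)$ rather than something merely of the same order. I would handle (ii) by observing that the $w^{k-1}$ term carries the factor $1-\gamma_{k-1}$ explicitly only through $\Delta_{k-1}$'s definition, so after the algebraic regrouping the normalizations align automatically; the delicate arithmetic of (iii) is routine but tedious, and I would present it as a short computation, deferring the full expansion to Appendix~\ref{apdx:le:SVRG_estimator}. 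Once \eqref{eq:SVRG_vr_properties} is established, the final sentence — that $\widetilde{S}^k$ satisfies Definition~\ref{de:ub_SG_estimator} — follows by reading off $\rho = \mbf{p}/2$, $\Theta = \frac{4-6\mbf{p}+3\mbf{p}^2}{b\mbf{p}}$, $\hat\Theta = \frac{2(2-3\mbf{p}+\mbf{p}^2)}{b\mbf{p}}$, and noting $\gamma_{k-1}^2 \le 1$ so the $U_{k-1}$ coefficient is dominated by $\frac{\hat\Theta}{(1-\gamma_{k-1})^2}$ as required.
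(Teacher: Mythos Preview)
Your proposal is correct and follows essentially the same route as the paper's proof: unbiasedness via $\Expsn{k}{G_{\Bc_k}y}=Gy$, the variance bound via the i.i.d.\ mini-batch identity on $Z_i := G_ix^k - \gamma_kG_ix^{k-1} - (1-\gamma_k)G_iw^k$, and the recursion via the coin-flip expectation followed by Young's inequality with parameter $c=\frac{\mbf{p}}{2(1-\mbf{p})}$ so that $(1+c)(1-\mbf{p})=1-\tfrac{\mbf{p}}{2}$. The only clarification is that the paper applies Young's inequality in two stages---first splitting off the $\Delta_{k-1}$-matching term with factor $(1+c)\frac{(1-\gamma_k)^2}{(1-\gamma_{k-1})^2}$, then bounding the cross term $\norms{G_ix^k - \gamma_kG_ix^{k-1} - \frac{1-\gamma_k}{1-\gamma_{k-1}}(G_ix^{k-1}-\gamma_{k-1}G_ix^{k-2})}^2$ by $2\norms{\cdot}^2+2\norms{\cdot}^2$---rather than a single three-term Young's inequality; this is precisely what makes the constants collapse to $4-6\mbf{p}+3\mbf{p}^2$ and $2(2-3\mbf{p}+\mbf{p}^2)$.
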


\beforesubsec
\subsection{The SAGA estimator for $S^k$.}\label{subsec:SAGA_estimator}
\aftersubsec
Consider an i.i.d. mini-batch $\Bc_k \subseteq [n] $ with a fixed size $b := \vert \Bc_k\vert$.
Let $G_{\Bc_k}x := \frac{1}{b}\sum_{i\in\Bc_k}G_ix$ be a mini-batch estimator of $Gx$ for a given $x \in \dom{G}$ and $\widehat{G}^k_{\Bc_k} := \frac{1}{b}\sum_{i \in \Bc_k}\widehat{G}^k_i$ for given $\sets{\widehat{G}^k_i}_{i=1}^n$.
Given $\sets{x^k}$ and $S^k = Gx^k - \gamma_kGx^{k-1}$ in \eqref{eq:VRKM4ME}, we define the following SAGA estimator for $S^k$:  
\myeq{eq:SAGA_estimator}{
\begin{array}{lcl}
\widetilde{S}^k & := & \frac{1-\gamma_k}{n}\sum_{i=1}^n\widehat{G}^k_i  +  \big[ G_{\Bc_k}x^k - \gamma_kG_{\Bc_k}x^{k-1} - (1-\gamma_k)\widehat{G}^k_{\Bc_k} \big], 
\end{array}
}
where $\widehat{G}^k_i$ is initialized at $\widehat{G}^0_i := G_ix^0$ for $i \in [n]$, and then, for $k\geq 1$, is updated by
\myeq{eq:SAGA_ref_points}{
\widehat{G}^k_i := \left\{\begin{array}{lcl}
\widehat{G}^{k-1}_i & \text{if}~i \notin \Bc_k,   \vspace{1ex}\\ 
G_ix^{k-1} &\text{if}~ i \in \Bc_k.
\end{array}\right.
}
For this SAGA estimator, we need to store all $n$ component $\widehat{G}_i^k$ computed so far for $i \in [n]$ in a table $\Tc_k := [\widehat{G}^k_1, \widehat{G}^k_2, \cdots, \widehat{G}^k_n]$.
At each iteration $k$, we will evaluate $G_ix^k$ for $i \in \Bc_k$, and update the table $\Tc_k$ for all $i \in \Bc_k$.
Hence, the SAGA estimator requires significant memory to store $\Tc_k$ if $n$ and $p$ are both large.

The following lemma shows that $\widetilde{S}^k$ constructed by \eqref{eq:SAGA_estimator} satisfies Definition~\ref{de:ub_SG_estimator}, whose proof is deferred to Appendix~\ref{apdx:le:SAGA_estimator}.

\begin{lemma}\label{le:SAGA_estimator}
Let $\sets{x^k}$ and $S^k = Gx^k - \gamma_k Gx^{k-1}$ be given in \eqref{eq:VRKM4ME} and $\widetilde{S}^k$ be constructed by the SAGA estimator \eqref{eq:SAGA_estimator}.
We define
\myeq{eq:SAGA_Deltak}{
\begin{array}{lcl}
\Delta_k &:= &  \frac{1}{nb} \sum_{i=1}^n  \norms{ G_ix^k - \gamma_k G_ix^{k-1} - (1-\gamma_k) \widehat{G}_i^k }^2.
\end{array}
}
If we denote $\Theta :=  \frac{2(n - b)(2n + b ) + b^2}{ n b^2 } $ and $\hat{\Theta} := \frac{2(n-b)(2n + b ) }{ nb^2  } $, then for all $k\geq 1$, we have
\myeq{eq:SAGA_vr_properties}{
\hspace{-2ex}
\left\{\begin{array}{lcl}
\Expsn{k}{\widetilde{S}^k - S^k} & = & 0,   \vspace{1ex}\\ 
\Expsn{k}{ \norms{\widetilde{S}^k - S^k }^2 } & \leq &   \Expsn{k}{ \Delta_k },   \vspace{1ex}\\ 
\frac{1}{(1-\gamma_k)^2} \Expsn{k}{ \Delta_k } & \leq &  \big(1 -  \frac{b}{2n} \big) \frac{1}{(1-\gamma_{k-1})^2}  \Delta_{k-1}  +   \frac{\Theta}{(1-\gamma_k)^2} U_k   +  \frac{\hat{\Theta}}{(1-\gamma_{k-1})^2 } U_{k-1}.
\end{array}\right.
\hspace{-2ex}
}
Consequently, $\widetilde{S}^k$ satisfies Definition~\ref{de:ub_SG_estimator} with $\rho := \frac{b}{2n} \in (0, 1)$, $\Theta$ and $\hat{\Theta}$ given above, and $\Delta_k$ in \eqref{eq:SAGA_Deltak}.
\end{lemma}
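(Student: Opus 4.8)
The plan is to verify the three conditions in Definition~\ref{de:ub_SG_estimator} for the SAGA estimator \eqref{eq:SAGA_estimator}, following the same template as the proof of Lemma~\ref{le:SVRG_estimator} but tracking the SAGA-specific bookkeeping. First I would establish unbiasedness: since $\Bc_k$ is an i.i.d. sample of size $b$ from $[n]$ with each index equally likely, $\Expsn{k}{G_{\Bc_k}x} = Gx$ and $\Expsn{k}{\widehat{G}^k_{\Bc_k}} = \frac{1}{n}\sum_{i=1}^n\widehat{G}^k_i$, and crucially the table $\Tc_k$ (hence $\frac{1}{n}\sum_i\widehat{G}^k_i$) and the points $x^k, x^{k-1}$ are $\Fc_k$-measurable; taking conditional expectation in \eqref{eq:SAGA_estimator} cancels the control-variate terms and leaves $\Expsn{k}{\widetilde{S}^k} = (1-\gamma_k)\cdot\frac{1}{n}\sum_i\widehat{G}^k_i + Gx^k - \gamma_kGx^{k-1} - (1-\gamma_k)\cdot\frac{1}{n}\sum_i\widehat{G}^k_i = S^k$.

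For the second condition, write $\widetilde{S}^k - S^k = \big[G_{\Bc_k}x^k - \gamma_kG_{\Bc_k}x^{k-1} - (1-\gamma_k)\widehat{G}^k_{\Bc_k}\big] - \Expsn{k}{\big[\cdots\big]}$, i.e.\ it is a centered mini-batch average of the per-index vectors $v_i^k := G_ix^k - \gamma_kG_ix^{k-1} - (1-\gamma_k)\widehat{G}^k_i$. Using the standard variance bound for sampling $b$ items without replacement (or, more simply, the crude bound $\Expsn{k}{\norms{\frac{1}{b}\sum_{i\in\Bc_k}v_i^k - \bar v}^2} \le \frac{1}{b}\cdot\frac{1}{n}\sum_{i=1}^n\norms{v_i^k}^2$ valid for sampling with replacement and also for without-replacement after dropping the finite-population correction), we obtain $\Expsn{k}{\norms{\widetilde{S}^k - S^k}^2} \le \frac{1}{nb}\sum_{i=1}^n\norms{v_i^k}^2 = \Expsn{k}{\Delta_k}$ with $\Delta_k$ as in \eqref{eq:SAGA_Deltak} (note $\Delta_k$ is $\Fc_k$-measurable except through $\widehat{G}^k_i$, so the outer conditional expectation is harmless and just records the convention).

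The main work is the recursive inequality. I would first derive the one-step recursion for the $\widehat{G}_i^k$ table: for each $i$, with probability $q := \Prob{i \in \Bc_k} = b/n$ the entry becomes $G_ix^{k-1}$, else it stays $\widehat{G}^{k-1}_i$. So, denoting $d_i^k := G_ix^k - \gamma_kG_ix^{k-1} - (1-\gamma_k)\widehat{G}_i^k$, I split $\Expsn{k}{\norms{d_i^k}^2}$ according to whether $i$ was drawn, getting a convex combination of $\norms{G_ix^k - \gamma_kG_ix^{k-1} - (1-\gamma_k)G_ix^{k-1}}^2 = \norms{G_ix^k - G_ix^{k-1}}^2$ (drawn case) and $\norms{G_ix^k - \gamma_kG_ix^{k-1} - (1-\gamma_k)\widehat{G}^{k-1}_i}^2$ (not-drawn case). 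For the not-drawn case I rewrite $G_ix^k - \gamma_kG_ix^{k-1} - (1-\gamma_k)\widehat{G}^{k-1}_i = \big[G_ix^{k-1} - \gamma_{k-1}G_ix^{k-2} - (1-\gamma_{k-1})\widehat{G}^{k-1}_i\big] + \big[(G_ix^k - G_ix^{k-1}) + (\gamma_{k-1}G_ix^{k-2} - \gamma_kG_ix^{k-1}) - (\gamma_{k-1}-\gamma_k)\widehat{G}^{k-1}_i\big]$ so as to expose $d_i^{k-1}$, then apply Young's inequality $\norms{a+b}^2 \le (1+s)\norms{a}^2 + (1+1/s)\norms{b}^2$ with $s$ chosen proportional to $\mathbf{p}$/$\rho$ (here $s \approx b/(2n)$) to absorb the cross term into the contraction factor $1-b/(2n)$; the remaining squared terms are controlled by $U_k = \frac{1}{n}\sum_i\norms{G_ix^k - G_ix^{k-1}}^2$ and $U_{k-1}$ after using $\norms{\alpha+\beta+\gamma}^2 \le 3\norms{\alpha}^2 + 3\norms{\beta}^2 + 3\norms{\gamma}^2$ and bounding $\norms{\widehat{G}^{k-1}_i}$-type leftovers (this is where the precise constants $\Theta = \frac{2(n-b)(2n+b)+b^2}{nb^2}$ and $\hat\Theta = \frac{2(n-b)(2n+b)}{nb^2}$ emerge, including the $\gamma$-factors absorbed into the $U$ terms). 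Averaging over $i$, dividing by $(1-\gamma_k)^2$, and matching the $(1-\gamma_{k-1})^{-2}$ normalization on the $\Delta_{k-1}$ and $U_{k-1}$ terms gives \eqref{eq:SAGA_vr_properties}. I expect the bookkeeping of the Young-inequality parameter and the cancellation of the $(1-\gamma)$ weights to be the delicate part — one must choose $s$ so that the leading coefficient on $\Delta_{k-1}$ is exactly $1-\rho = 1-b/(2n)$ while keeping the $U$-coefficients finite and clean; the rest is routine expansion. Finally, reading off $\rho := b/(2n) \in (0,1)$ (using $b \le n$, and $b<2n$ trivially) together with the displayed $\Theta, \hat\Theta \ge 0$ and $\Delta_k \ge 0$ confirms that $\widetilde{S}^k$ meets Definition~\ref{de:ub_SG_estimator}. \Eproof
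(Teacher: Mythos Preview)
Your overall architecture matches the paper's: unbiasedness via linearity, the variance bound $\Expsn{k}{\norms{\widetilde{S}^k-S^k}^2}\le \Expsn{k}{\Delta_k}$ via the standard minibatch-variance identity, and the recursion by splitting each index $i$ according to whether $i\in\Bc_k$ and then applying Young's inequality. The first two parts are fine.

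The gap is in your decomposition for the ``not drawn'' case. Your split
\[
G_ix^k-\gamma_kG_ix^{k-1}-(1-\gamma_k)\widehat{G}_i^{k-1}
= d_i^{k-1} + \Big[(G_ix^k-G_ix^{k-1})+(\gamma_{k-1}G_ix^{k-2}-\gamma_kG_ix^{k-1})-(\gamma_{k-1}-\gamma_k)\widehat{G}_i^{k-1}\Big]
\]
leaves a bare $(\gamma_{k-1}-\gamma_k)\widehat{G}_i^{k-1}$ in the remainder, and there is no available bound on $\norms{\widehat{G}_i^{k-1}}^2$: these are stored operator values, not differences, so they cannot be controlled by $U_k$ or $U_{k-1}$. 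Your phrase ``bounding $\norms{\widehat{G}^{k-1}_i}$-type leftovers'' is exactly the step that fails. A second symptom is that your split puts coefficient $1$ on $d_i^{k-1}$, which after Young would give a factor $(1+s)(1-\tfrac{b}{n})$ on $\Delta_{k-1}$ with \emph{no} $(1-\gamma_k)^2/(1-\gamma_{k-1})^2$ in front; but the target inequality in \eqref{eq:SAGA_vr_properties} requires precisely that ratio.

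The fix (and what the paper does) is to scale the old term so the $\widehat{G}_i^{k-1}$ contributions cancel:
\[
G_ix^k-\gamma_kG_ix^{k-1}-(1-\gamma_k)\widehat{G}_i^{k-1}
= \tfrac{1-\gamma_k}{1-\gamma_{k-1}}\,d_i^{k-1}
+ \Big[(G_ix^k-G_ix^{k-1}) - \tfrac{(1-\gamma_k)\gamma_{k-1}}{1-\gamma_{k-1}}(G_ix^{k-1}-G_ix^{k-2})\Big].
\]
Now the remainder involves only consecutive differences and is genuinely dominated by $U_k$ and $U_{k-1}$ via $\norms{a+b}^2\le 2\norms{a}^2+2\norms{b}^2$. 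Applying Young's inequality $\norms{a+b}^2\le(1+c)\norms{a}^2+(1+\tfrac{1}{c})\norms{b}^2$ with $c:=\tfrac{b}{2n}$ to this split yields $(1-\tfrac{b}{n})(1+c)\le 1-\tfrac{b}{2n}$ on the $\Delta_{k-1}$ term (with the correct $(1-\gamma_k)^2/(1-\gamma_{k-1})^2$ factor built in), and the constants $\Theta=\tfrac{2(n-b)(2n+b)+b^2}{nb^2}$ and $\hat\Theta=\tfrac{2(n-b)(2n+b)}{nb^2}$ drop out directly.
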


\beforesec
\section{Convergence  and Complexity Analysis of \eqref{eq:VRKM4ME}.}\label{sec:AVFR4NE}
\aftersec
In this section, we first establish sublinear convergence rates of \eqref{eq:VRKM4ME}.
Then, we prove its almost sure convergence rates and the almost sure convergence of $\sets{x^k}$.
Finally, we estimate the complexity for two variants of \eqref{eq:VRKM4ME} using our SVRG and SAGA, respectively.

\beforesubsec
\subsection{Lyapunov function and descent lemma.}\label{subsec:one_iteration_analysis}
\aftersubsec
The following lemma serves as a key step to prove the convergence of  \eqref{eq:VRKM4ME}, whose proof is given in Appendix~\ref{apdx:le:AVFR4NE_key_estimate}.

\begin{lemma}\label{le:AVFR4NE_key_estimate}
Suppose that Condition~\eqref{eq:co-coerciveness_of_G} holds for \eqref{eq:ME}.
Let $\sets{x^k}$ be generated by \eqref{eq:VRKM4ME} to solve \eqref{eq:ME} using a stochastic estimator $\widetilde{S}^k$ of $S^k$ satisfying Definition~\ref{de:ub_SG_estimator} and the following parameters:
\myeq{eq:AVFR4NE_para_update}{
\arraycolsep=0.2em
\begin{array}{ll}
\theta_k := \frac{t_k - r - \mu}{t_{k+1}}, \quad \gamma_k := \frac{t_k - r - \mu}{t_k - \mu}, \quad \text{and} \quad \eta_k := \frac{2\beta(t_k - \mu)}{t_{k+1}},
\end{array}
}
where $\beta > 0$, $r > 0$, and $\mu > 0$ are given and $t_k \geq r + \mu$ for all $k\geq 0$.

Consider the following \textbf{Lyapunov function}:
\myeq{eq:AVFR4NE_Lfunc}{
\hspace{-4ex}
\arraycolsep=0.2em
\begin{array}{lcl}
\Ec_k & := & 4r\beta(t_k-\mu) \big[ \iprods{Gx^k, x^k - x^{\star}} - \beta\norms{ Gx^k }^2 \big] + \norms{r(x^k - x^{\star}) + t_{k+1}(x^{k+1} - x^k ) }^2   \vspace{1ex}\\ 
&& + {~} \mu r \norms{x^k - x^{\star}}^2  +  \frac{4 \beta^2 \hat{\Theta}(t_k - \mu)^2 }{\rho }  U_k   +  \frac{4  \beta^2(1-\rho) (t_k  - \mu)^2 }{ \rho } \Delta_k,
\end{array}
\hspace{-6ex}
}
where $U_k := \frac{1}{n}  \sum_{i=1}^n  \norms{G_ix^k - G_ix^{k-1}}^2$.

Then, for all $k\geq 1$, we have
\myeq{eq:AVFR4NE_key_estimate1}{
\hspace{-0.0ex}
\arraycolsep=0.2em
\begin{array}{lcl}
 \Ec_{k-1}  - \Expsn{k}{ \Ec_{k} } & \geq &  \Lambda_k(t_k - \mu)^2  U_k  +  \mu (2t_k - r - \mu)  \norms{x^k - x^{k-1}}^2 \vspace{1ex}\\
&& + {~} 4r\beta(t_{k-1} - t_k + r)\big[ \iprods{Gx^{k-1}, x^{k-1} - x^{\star}} - \beta \norms{Gx^{k-1}}^2 \big],
\end{array}
\hspace{-3.0ex}
}
where $\Lambda_k := 4\beta   \left[  \frac{(t_k - r - \mu)}{(t_k - \mu)}  \big(\frac{1}{L} - \beta\big) - \frac{\beta (\Theta + \hat{\Theta}) }{\rho } \right]$.
\end{lemma}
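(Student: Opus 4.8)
The plan is to expand the one-step change $\Ec_{k-1} - \Expsn{k}{\Ec_k}$ term by term and collect everything into the three nonnegative (or sign-controlled) contributions on the right-hand side of \eqref{eq:AVFR4NE_key_estimate1}. First I would handle the quadratic block $\norms{r(x^k - x^{\star}) + t_{k+1}(x^{k+1} - x^k)}^2$: substitute the update \eqref{eq:VRKM4ME} in the form $x^{k+1} - x^k = \theta_k(x^k - x^{k-1}) - \eta_k\widetilde{S}^k$, so that $r(x^k - x^\star) + t_{k+1}(x^{k+1}-x^k) = r(x^k - x^\star) + t_{k+1}\theta_k(x^k - x^{k-1}) - t_{k+1}\eta_k\widetilde{S}^k$. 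With the parameter choices \eqref{eq:AVFR4NE_para_update} one has $t_{k+1}\theta_k = t_k - r - \mu$ and $t_{k+1}\eta_k = 2\beta(t_k - \mu)$, so the bracket becomes $r(x^{k-1}-x^\star) + (t_k-\mu)(x^k - x^{k-1}) - 2\beta(t_k-\mu)\widetilde{S}^k$ — i.e. it telescopes against the same expression at index $k-1$ up to lower-order terms. Taking $\Expsn{k}{\cdot}$ and using unbiasedness $\Expsn{k}{\widetilde{S}^k - S^k}=0$ turns the cross term with $\widetilde{S}^k$ into the cross term with $S^k = Gx^k - \gamma_k Gx^{k-1}$, while the quadratic $\Expsn{k}{\norms{\widetilde{S}^k}^2}$ splits as $\norms{S^k}^2 + \Expsn{k}{\norms{\widetilde{S}^k - S^k}^2} \le \norms{S^k}^2 + \Expsn{k}{\Delta_k}$ via the second line of Definition~\ref{de:ub_SG_estimator}; this is where the variance term $\Delta_k$ enters and is exactly why the $\Delta_k$-term with coefficient $\tfrac{4\beta^2(1-\rho)(t_k-\mu)^2}{\rho}$ sits in the Lyapunov function.

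Next I would process the co-coercivity/inner-product block $4r\beta(t_k-\mu)[\iprods{Gx^k, x^k - x^\star} - \beta\norms{Gx^k}^2]$. Here the engine is Condition~\eqref{eq:co-coerciveness_of_G}, applied with $x = x^k$, $y = x^{k-1}$, which produces $\iprods{Gx^k - Gx^{k-1}, x^k - x^{k-1}} \ge \tfrac{1}{L}U_k$, together with the elementary identity relating $\iprods{Gx^k, x^k - x^\star}$, $\iprods{Gx^{k-1}, x^{k-1}-x^\star}$, $\iprods{S^k, x^k - x^{k-1}}$ and the cross terms. Combining the expansion of the quadratic block (which, after the cancellations above, leaves a term like $-4\beta(t_k-\mu)\iprods{S^k, r(x^{k-1}-x^\star) + (t_k-\mu)(x^k-x^{k-1})}$ plus $4\beta^2(t_k-\mu)^2\norms{S^k}^2$) with this co-coercivity estimate is the heart of the computation: the inner products with $x^\star$ must recombine to give $4r\beta(t_{k-1}-t_k+r)[\iprods{Gx^{k-1},x^{k-1}-x^\star} - \beta\norms{Gx^{k-1}}^2]$ (the residual term from shifting $k \to k-1$), the $\mu r\norms{x^k-x^\star}^2$ terms telescope against a multiple of $\norms{x^k - x^{k-1}}^2$ producing the $\mu(2t_k - r - \mu)\norms{x^k-x^{k-1}}^2$ term, and the $\norms{S^k}^2$-versus-$\norms{Gx^k}^2,\norms{Gx^{k-1}}^2$ bookkeeping (using $S^k = Gx^k - \gamma_k Gx^{k-1}$ and $\norms{S^k}^2 \le$ a convex-type bound, or rather an exact expansion) must leave a $U_k$-coefficient of the stated form $\Lambda_k(t_k-\mu)^2$.

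Finally I would fold in the estimator recursion — the third line of Definition~\ref{de:ub_SG_estimator} — to control the two auxiliary terms $\tfrac{4\beta^2\hat\Theta(t_k-\mu)^2}{\rho}U_k + \tfrac{4\beta^2(1-\rho)(t_k-\mu)^2}{\rho}\Delta_k$ in $\Ec_k$. Taking $\Expsn{k}{\cdot}$ of the $\Delta_k$-term and applying that recursion (after multiplying through by $(1-\gamma_k)^2 = \tfrac{(t_k-\mu)^2 \rho^2 \cdot(\dots)}{}$; concretely $(t_k-\mu)(1-\gamma_k) = r+\mu$, a constant, so $(t_k-\mu)^2(1-\gamma_k)^2$ is constant) converts $\tfrac{(t_k-\mu)^2}{(1-\gamma_k)^{-2}\cdot\text{stuff}}\Expsn{k}{\Delta_k}$ into a $(1-\rho)$-contracted copy of the $\Delta_{k-1}$-term plus multiples of $U_k$ and $U_{k-1}$ with coefficients proportional to $\Theta$ and $\hat\Theta$; the $U_k$ part is absorbed into $\Lambda_k(t_k-\mu)^2 U_k$ (accounting for the $-\tfrac{\beta(\Theta+\hat\Theta)}{\rho}$ inside $\Lambda_k$), and the $U_{k-1}$ part cancels precisely against the $\tfrac{4\beta^2\hat\Theta(t_{k-1}-\mu)^2}{\rho}U_{k-1}$ term carried in $\Ec_{k-1}$. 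I expect the main obstacle to be exactly this simultaneous bookkeeping: arranging that the coefficients of $\norms{x^k - x^{k-1}}^2$, $U_k$, $U_{k-1}$, $\norms{Gx^{k-1}}^2$, $\iprods{Gx^{k-1}, x^{k-1}-x^\star}$, and $\Delta_{k-1}$ all match up under the specific algebraic relations forced by \eqref{eq:AVFR4NE_para_update} (the identities $t_{k+1}\theta_k = t_k - r-\mu$, $t_{k+1}\eta_k = 2\beta(t_k-\mu)$, $(t_k-\mu)(1-\gamma_k)=r+\mu$), so that no stray indefinite term survives. The rest is careful but routine expansion of squared norms.
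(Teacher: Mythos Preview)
Your proposal is correct and follows essentially the same three-stage route as the paper: expand the quadratic block $\norms{r(x^k-x^\star)+t_{k+1}(x^{k+1}-x^k)}^2$ via \eqref{eq:VRKM4ME} and unbiasedness (the paper packages this as an identity for $\Qc_k$), add in the co-coercivity bound \eqref{eq:co-coerciveness_of_G} to control $\iprods{Gx^k-Gx^{k-1},x^k-x^{k-1}}$ and $\norms{Gx^k-Gx^{k-1}}^2$ by $U_k$ (packaged as $\Lc_k$), and finally absorb the variance via the third line of Definition~\ref{de:ub_SG_estimator}. One small slip: from $\gamma_k = \tfrac{t_k-r-\mu}{t_k-\mu}$ you get $(t_k-\mu)(1-\gamma_k)=r$, not $r+\mu$; this does not affect your argument since you only use that $(t_k-\mu)^2(1-\gamma_k)^2$ is constant.
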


\beforesubsec
\subsection{Sublinear convergence rates of \eqref{eq:VRKM4ME}.}\label{subsec:AVROG_with_SVRG}
\aftersubsec
For simplicity of our presentation, we define  the following constants:  
\myeq{eq:AVFR4NE_constants}{
\hspace{-3ex}
\arraycolsep=0.1em
\left\{\begin{array}{lcllcl}
\bar{\beta} & := &  \frac{\rho}{ [\rho + (r+1)(\Theta + \hat{\Theta})] L},  &  \Lambda  & := &   \frac{4\beta \left( \rho - [\rho  + (r+1)(\Theta + \hat{\Theta})] L\beta \right) }{L\rho(r+1)},   \vspace{1ex}\\  
C_0 & := & r (1 + 3r + 8rL^2\beta^2),   &  C_1 & := &  \frac{4r^2(r-1)L^2\beta^2}{r+2} + \big(\frac{2}{r} +  \psi \big) C_0, \vspace{1ex}\\  
C_2 & := & 4r^2L^2\beta^2  + \big[ \psi +  \frac{2(r+2)^2}{r}\big] C_0 + \frac{4(r+2)^2C_1}{r(r-2)},     \ \ & C_3 &:= & \frac{(r+2)^2(C_1 + C_2)}{2r^2 },
\end{array}\right.
\hspace{-6ex}
}
where $\psi := \frac{4\beta^2(\Theta + \hat{\Theta})}{\rho\Lambda}$.
For a given $r > 2$, we have $C_s \leq \BigO{1}$ for $s \in \sets{0,1,2,3}$.

Now, we establish sublinear convergence rates and iteration-complexity of \eqref{eq:VRKM4ME}.{\!\!\!\!}

\begin{theorem}\label{th:AVFR4NE_convergence}
Suppose that $G$ in \eqref{eq:ME} satisfies Condition~\eqref{eq:co-coerciveness_of_G}.
Let $\sets{x^k}$ be generated by our method \eqref{eq:VRKM4ME} to solve \eqref{eq:ME} using a stochastic estimator $\widetilde{S}^k$ of $S^k$ satisfying Definition~\ref{de:ub_SG_estimator} with $\rho \in (0, 1)$ and $\Delta_0 = 0$ almost surely.
For $\bar{\beta}$ given in  \eqref{eq:AVFR4NE_constants}, assume that  $r > 2$ and $\beta \in (0, \bar{\beta})$ are given, and $\theta_k$, $\gamma_k$, and $\eta_k$ are updated by
\myeq{eq:AVFR4NE_para_update2}{
\theta_k := \frac{k}{k + r + 2}, \quad \gamma_k := \frac{k}{k + r}, \quad \text{and} \quad \eta_k := \frac{2\beta(k + r)}{k + r + 2}.
}
Let $\Lambda$ and $C_s$ for $s \in \sets{0,1,2,3}$ be given in \eqref{eq:AVFR4NE_constants}.
Then, the following statements hold.
\begin{compactitem}
\item[$\mathrm{(a)}$] \textbf{Summability bounds.}
The following bounds hold:
\myeq{eq:AVFR4NE_convergence}{
\arraycolsep=0.2em
\begin{array}{lcl}
\sum_{k=0}^{\infty} (2k + r + 3) \Expk{ \norms{x^{k+1} - x^k}^2 } & \leq &  C_0  \norms{x^0 - x^{\star}}^2, \vspace{1ex}\\ 
\sum_{k=0}^{\infty} (2k + r + 3)\Expk{\norms{Gx^k}^2 }  & \leq &  \frac{(r+2)^2}{2\beta^2r^2}\big(C_0 + \frac{2C_1}{r-2}\big) \norms{x^0 - x^{\star}}^2.
\end{array}
}
\item[$\mathrm{(b)}$]\textbf{Big-O convergence rates.}
For any $k \geq 0$,  we have
\myeq{eq:AVFR4NE_convergence2b}{
\hspace{-0.1ex}
\arraycolsep=0.2em
\begin{array}{lll}
\Expk{\norms{x^{k+1} - x^k}^2 } & \leq  & \dfrac{ C_2   \norms{x^0 - x^{\star}}^2 }{(k+r+2)^2},   \vspace{1ex}\\ 
\Expk{\norms{Gx^k}^2 }  & \leq & \dfrac{C_3   \norms{x^0 - x^{\star}}^2 }{\beta^2 (k+r-1)(k+r + 2) }.
\end{array}
\hspace{-4ex}
}
\item[$\mathrm{(c)}$]\textbf{Small-o convergence rates.}
The following $\SmallOs{1/k^2}$ rates also hold:
\myeq{eq:AVFR4NE_convergence2c}{
\arraycolsep=0.2em
\lim_{k\to\infty} k^2 \Expk{\norms{x^{k+1} - x^k}^2 } = 0 \quad \text{and} \quad \lim_{k\to\infty} k^2 \Expk{\norms{Gx^k}^2 } = 0.
}
\end{compactitem}
For a given $\epsilon > 0$, \eqref{eq:VRKM4ME} requires $\BigOs{\frac{1}{\epsilon}}$ iterations to achieve $\Expn{\norms{Gx^k}^2} \leq \epsilon^2$.
\end{theorem}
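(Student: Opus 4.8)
The plan is to derive all four statements from the single one-step estimate \eqref{eq:AVFR4NE_key_estimate1} of Lemma~\ref{le:AVFR4NE_key_estimate}, specialized by taking $\mu := 1$ and $t_k := k+r+1$, so that $t_k - r - \mu = k$, $t_k - \mu = k+r$, $t_k \ge r+\mu$ for all $k\ge0$, and the abstract updates \eqref{eq:AVFR4NE_para_update} reduce exactly to \eqref{eq:AVFR4NE_para_update2}. Under this choice the coefficients in \eqref{eq:AVFR4NE_key_estimate1} simplify to $\mu(2t_k - r - \mu) = 2k+r+1$, $t_{k-1} - t_k + r = r-1 > 0$ (using $r > 2$), and, since $\frac{t_k - r - \mu}{t_k - \mu} = \frac{k}{k+r} \ge \frac{1}{r+1}$ for $k\ge1$, one has $\Lambda_k \ge \Lambda > 0$ precisely when $\beta < \bar{\beta}$, with $\Lambda,\bar{\beta}$ as in \eqref{eq:AVFR4NE_constants}. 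A preliminary fact I would record is $\Ec_k \ge 0$: by \eqref{eq:co-coerciveness_of_G} with $y = x^{\star}$, $Gx^{\star} = 0$, and Jensen's inequality, $\iprods{Gx^k, x^k-x^{\star}} \ge \frac{1}{L}\norms{Gx^k}^2$, so the bracket in \eqref{eq:AVFR4NE_Lfunc} is $\ge (\frac{1}{L} - \beta)\norms{Gx^k}^2 \ge 0$ because $\beta < \bar{\beta} < \frac{1}{L}$, while the remaining terms of $\Ec_k$ are squares.

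For part (a), I would take total expectation in \eqref{eq:AVFR4NE_key_estimate1}, apply the tower rule, and sum over $k = 1,\dots,K$: the left side telescopes to $\Ec_0 - \Expk{\Ec_K} \le \Ec_0$, and every term on the right is nonnegative. Keeping the $(2k+r+1)\norms{x^k-x^{k-1}}^2$ term and reindexing gives $\sum_{k\ge0}(2k+r+3)\Expk{\norms{x^{k+1}-x^k}^2} \le \Ec_0$, and a direct evaluation of $\Ec_0$ (using $x^{-1} = x^0$, $\gamma_0 = 0$ so $\widetilde S^0 = S^0 = Gx^0$, $\eta_0 = \frac{2\beta r}{r+2}$, $U_0 = \Delta_0 = 0$, and $\norms{Gx^0} \le L\norms{x^0-x^{\star}}$ from \eqref{eq:co-coerciveness_of_G}) yields $\Ec_0 \le C_0\norms{x^0-x^{\star}}^2$; this is the first bound in \eqref{eq:AVFR4NE_convergence}. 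Keeping instead the $\Lambda(t_k-\mu)^2U_k$ term gives $\sum_{k\ge1}(k+r)^2\Expk{U_k} < \infty$, and substituting $1-\gamma_k = \frac{r}{k+r}$ into the third line of \eqref{eq:ub_SG_estimator} and telescoping produces $\sum_{k\ge1}(k+r)^2\Expk{\Delta_k} \le \frac{\Theta+\hat{\Theta}}{\rho}\sum_{k\ge1}(k+r)^2\Expk{U_k}$; keeping the $4r\beta(r-1)$-bracket term gives $\sum_{k\ge0}\Expk{\norms{Gx^k}^2} < \infty$. The weighted bound on $\norms{Gx^k}^2$ in \eqref{eq:AVFR4NE_convergence} is obtained by expressing $Gx^k$ through the increments and $U_k,\Delta_k$ via $\eta_k\widetilde S^k = \theta_k(x^k-x^{k-1}) - (x^{k+1}-x^k)$ together with $S^k = (1-\gamma_k)Gx^k + \gamma_k(Gx^k-Gx^{k-1})$ (with $\norms{Gx^k-Gx^{k-1}}^2 \le U_k$ and $\Expk{\norms{S^k}^2} \le 2\Expk{\norms{\widetilde S^k}^2} + 2\Expk{\Delta_k}$), then summing against the weight $2k+r+3$ and collecting the summable pieces — the delicate point being to exploit the cancellations in the combination $\theta_k(x^k-x^{k-1}) - (x^{k+1}-x^k)$ so that the weight stays at the right order.

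For part (b), dropping the nonnegative right side of \eqref{eq:AVFR4NE_key_estimate1} shows $\{\Expk{\Ec_k}\}$ is nonincreasing, hence $\Expk{\Ec_k} \le \Ec_0$; since $\Ec_k \ge r\norms{x^k-x^{\star}}^2$ the iterates stay bounded, and combining $\norms{r(x^k-x^{\star}) + t_{k+1}(x^{k+1}-x^k)}^2 \ge \frac{1}{2}t_{k+1}^2\norms{x^{k+1}-x^k}^2 - r^2\norms{x^k-x^{\star}}^2$ with $\norms{x^k-x^{\star}}^2 \le \Ec_k/r$ bounds $t_{k+1}^2\Expk{\norms{x^{k+1}-x^k}^2}$ by a multiple of $\Ec_0 \le C_0\norms{x^0-x^{\star}}^2$; with $t_{k+1} = k+r+2$ this is the first bound in \eqref{eq:AVFR4NE_convergence2b}. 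For the second bound I would combine the weighted summability $\sum_{k\ge0}(2k+r+3)\Expk{\norms{Gx^k}^2} < \infty$ from (a) with an Abel-summation/monotonicity argument for $\{\Expk{\norms{Gx^k}^2}\}$ (a descent property of the stochastic KM step on the co-coercive $G$) to get $\big(\sum_{j=0}^k(2j+r+3)\big)\Expk{\norms{Gx^k}^2} \lesssim \norms{x^0-x^{\star}}^2$, i.e.\ the $\BigOs{1/k^2}$ bound. For part (c), $\{\Expk{\Ec_k}\}$ is nonincreasing and bounded below, hence converges; the summability in (a) forces $(k+r)^2\Expk{U_k}\to0$, $(k+r)^2\Expk{\Delta_k}\to0$, and combining the convergence of $\{\Expk{\Ec_k}\}$ with the weight-$k$ summability of $\Expk{\norms{x^{k+1}-x^k}^2}$ and $\Expk{\norms{Gx^k}^2}$, the standard ``summability $\Rightarrow \SmallO{\cdot}$'' device for inertial schemes (cf.\ \cite{attouch2019convergence,tran2022connection}) upgrades the $\BigOs{1/k^2}$ rates of (b) to the limits \eqref{eq:AVFR4NE_convergence2c}. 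The complexity claim is then immediate from \eqref{eq:AVFR4NE_convergence2b}: $\Expn{\norms{Gx^k}^2} \le \frac{C_3\norms{x^0-x^{\star}}^2}{\beta^2(k+r-1)(k+r+2)} = \BigOs{1/k^2}$, so $\Expn{\norms{Gx^k}^2}\le\epsilon^2$ holds once $k = \BigOs{1/\epsilon}$.

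The step I expect to be the main obstacle is the constant-level bookkeeping combined with obtaining the \emph{sharp} rate for $\norms{Gx^k}^2$: one must propagate the coupled $(U_k,\Delta_k)$ recursion of Definition~\ref{de:ub_SG_estimator} through the Lyapunov estimate while keeping every constant explicit (the $C_s$ of \eqref{eq:AVFR4NE_constants}), pin down the cancellations in the identity linking $Gx^k$ to the increments so that the weighted series in \eqref{eq:AVFR4NE_convergence} converges (this is what turns the easy $\BigOs{1/k}$ rate for $\norms{Gx^k}^2$ into $\BigOs{1/k^2}$), and establish the monotonicity/summable-error ingredients underlying the $\SmallOs{1/k^2}$ upgrade in (c).
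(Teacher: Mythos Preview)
Your setup (choosing $\mu=1$, $t_k=k+r+1$, verifying $\Lambda_k\ge\Lambda>0$, $\Ec_k\ge0$, and $\Ec_0\le C_0\norms{x^0-x^{\star}}^2$) and your derivation of the first line of \eqref{eq:AVFR4NE_convergence} together with the summability of $(k+r)^2\Expk{U_k}$ and $(k+r)^2\Expk{\Delta_k}$ match the paper exactly. Your extraction of the $\BigOs{1/k^2}$ bound on $\Expk{\norms{x^{k+1}-x^k}^2}$ directly from $\Ec_k\le\Ec_0$ via the inequality $\norms{a+b}^2\ge\tfrac12\norms{b}^2-\norms{a}^2$ is a legitimate shortcut (with a constant different from $C_2$).

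However, there is a genuine gap in your route to the second summability bound in \eqref{eq:AVFR4NE_convergence} and to the $\BigOs{1/k^2}$ rate on $\Expk{\norms{Gx^k}^2}$. The direct approach you describe does not close: from $\eta_k(1-\gamma_k)Gx^k = \theta_k(x^k-x^{k-1})-(x^{k+1}-x^k)-\eta_ke^k-\eta_k\gamma_k(Gx^k-Gx^{k-1})$ and $\eta_k(1-\gamma_k)=\frac{2\beta r}{k+r+2}$, squaring and multiplying by $(2k+r+3)$ puts a weight of order $k^3$ on $\norms{x^{k+1}-x^k}^2$, two orders above what \eqref{eq:AVFR4NE_convergence} supplies. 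There is no algebraic cancellation in the combination $\theta_k(x^k-x^{k-1})-(x^{k+1}-x^k)$ that removes this. Separately, your proposed route to the pointwise bound on $\Expk{\norms{Gx^k}^2}$ via Abel summation requires $\{\Expk{\norms{Gx^k}^2}\}$ to be nonincreasing, and no such descent property holds for the stochastic accelerated scheme \eqref{eq:VRKM4ME} --- the co-coercivity of $G$ does not yield it.

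What the paper does instead is introduce the auxiliary vector $v^k:=x^{k+1}-x^k+\eta_kGx^k$ and observe that the update \eqref{eq:VRKM4ME} gives the \emph{contractive} recursion $v^k=s_kv^{k-1}+(1-s_k)\frac{\theta_k-s_k}{1-s_k}(x^k-x^{k-1})-\eta_ke^k$ with $s_k=\frac{\eta_k\gamma_k}{\eta_{k-1}}\in(0,1)$; squaring and using convexity yields a near-descent inequality for $W_k:=(k+r-1)(k+r+2)\norms{v^k}^2+\frac{4\beta^2(1-\rho)}{\rho}(k+r)^2\Delta_k+\frac{4\beta^2\hat\Theta}{\rho}(k+r)^2U_k$, from which both $\sum_k(k+r+1)\Expk{\norms{v^{k-1}}^2}\le C_1\norms{x^0-x^{\star}}^2$ and $\Expk{W_k}\le C_1\norms{x^0-x^{\star}}^2$ follow. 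Then $\frac{2\beta^2r^2}{(r+2)^2}\norms{Gx^k}^2\le\norms{v^k}^2+\norms{x^{k+1}-x^k}^2$ gives both the second line of \eqref{eq:AVFR4NE_convergence} and the second line of \eqref{eq:AVFR4NE_convergence2b}. The same device (a second auxiliary $Z_k$ built around $(k+r+2)^2\norms{x^{k+1}-x^k}^2$) is what makes the limits $\lim_k(k+r+2)^2\Expk{\norms{x^{k+1}-x^k}^2}$ and $\lim_k(k+r-1)(k+r+2)\Expk{\norms{v^k}^2}$ \emph{exist}, which is the missing hypothesis you need before invoking the ``summability $\Rightarrow\SmallO{\cdot}$'' lemma for part~(c). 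Your sketch of (c) asserts these limits exist from the convergence of $\Expk{\Ec_k}$, but $\Ec_k$ contains the cross term $2r(k+r+2)\iprods{x^{k+1}-x^k,x^k-x^{\star}}$, and you cannot isolate $(k+r+2)^2\norms{x^{k+1}-x^k}^2$ from it without independently controlling $\norms{x^k-x^{\star}}$ at the level of limits.
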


\proof{\textbf{Proof.}}
For simplicity of our analysis, we choose $\mu := 1$ and $t_k := k + r + \mu = k + r + 1$.
Since $t_k = k + r + 1$ and $\mu = 1$, from \eqref{eq:AVFR4NE_para_update}, we get
\myeqn{
\arraycolsep=0.2em
\begin{array}{lcl}
\theta_k := \frac{t_k - r - \mu}{t_{k+1}} = \frac{k}{k + r + 2}, 
\quad \gamma_k := \frac{t_k - r - \mu}{t_k - \mu} = \frac{k}{k + r}, \quad \text{and} \quad \eta_k := \frac{2\beta(t_k - \mu)}{t_{k+1}} = \frac{2\beta(k + r)}{k + r + 2},
\end{array}
}
which are the updates in \eqref{eq:AVFR4NE_para_update2}.

Now, for clarity, we divide the proof into several steps as follows.

\vspace{0.75ex}
\noindent\textbf{Step 1.}~\textit{The first summability result in \eqref{eq:AVFR4NE_convergence}.}
Since $\beta \in (0, \bar{\beta})$, $t_k = k + r - 1$, and $\mu = 1$, for $k \geq 1$, the constant $\Lambda_k$ in Lemma~\ref{le:AVFR4NE_key_estimate} is lower bounded by 
\myeqn{
\arraycolsep=0.2em
\begin{array}{lcl}
\Lambda_k & := & 4\beta   \Big[  \frac{(t_k - r - \mu)}{(t_k - \mu)} \big(\frac{1}{L} - \beta\big) - \frac{ \beta (\Theta + \hat{\Theta}) }{\rho } \Big] = 4\beta \big[ \frac{k}{k+r} \big(\frac{1}{L} - \beta\big)  - \frac{\beta (\Theta + \hat{\Theta} ) }{\rho} \big]   \vspace{1ex}\\ 
& \geq &  \frac{4\beta}{r+1}  \left[ \frac{1}{L} -  \frac{[\rho  + (r+1)(\Theta + \hat{\Theta})] \beta}{\rho}   \right] =    \frac{4\beta \left( \rho  - [\rho  + (r+1) (\Theta + \hat{\Theta})] L\beta \right) }{L\rho(r+1)} \equiv \Lambda,
\end{array}
}
where $\Lambda$ is defined by \eqref{eq:AVFR4NE_constants}.
Clearly, since $\beta \in (0, \bar{\beta})$, we have $\Lambda_k \geq \Lambda > 0$ for $k \geq 1$.

Since $\beta \leq \frac{1}{L}$, by Condition \eqref{eq:co-coerciveness_of_G}, we have $\iprods{Gx^{k-1}, x^{k-1} - x^{\star}} - \beta \norms{Gx^{k-1}}^2 \geq 0$.
Substituting this inequality, the lower bound $\Lambda$ of $\Lambda_k$, and $t_k - \mu = k + r$ into \eqref{eq:AVFR4NE_key_estimate1}, and noting that $t_{k-1} - t_k + r = r - 1 > 0$ (since $r > 2$), we obtain
\myeq{eq:AVFR4NE_key_inequality100}{
\arraycolsep=0.2em
\begin{array}{lcl}
\Ec_{k-1} - \Expsn{k}{ \Ec_k } & \geq &    \Lambda (k+r)^2 U_k +  (2k+r + 1) \norms{x^k - x^{k-1}}^2, \quad \forall k \geq 1.
\end{array}
}
Taking the total expectation on both sides of this inequality, we get
\myeqn{
\arraycolsep=0.2em
\begin{array}{lcl}
\Expn{\Ec_{k-1}} - \Expn{ \Ec_k } & \geq &    \Lambda (k+r)^2 \Expn{ U_k } +  (2k+r + 1) \Expn{ \norms{x^k - x^{k-1}}^2 }.
\end{array}
}
Summing up this inequality  from $k=1$ to $k = K$, and then using \eqref{eq:Ec0_lower_bound} from Lemma~\ref{le:lower_bounding_Ec_0}, and $\Expk{\Ec_K} \geq 0$, after taking the limit of the result as $K \to \infty$, we can show that
\myeq{eq:AVFR4NE_key_inequality101}{
\arraycolsep=0.2em
\begin{array}{lcl}
\sum_{k=1}^{\infty} (2k+r + 1) \Expn{ \norms{x^k - x^{k-1}}^2 } &\leq & \Ec_0 \overset{\tiny\eqref{eq:Ec0_lower_bound}}{\leq} C_0  \norms{x^0 - x^{\star}}^2, \vspace{1ex}\\
\sum_{k=1}^{\infty}(k+r)^2 \Expn{ U_k } &\leq & \frac{1}{\Lambda}\Ec_0 \overset{\tiny\eqref{eq:Ec0_lower_bound}}{\leq}  \frac{C_0}{\Lambda}  \norms{x^0 - x^{\star}}^2.
\end{array}
}
The first line of \eqref{eq:AVFR4NE_key_inequality101} is exactly the first  summability bound in \eqref{eq:AVFR4NE_convergence}.

\vspace{0.75ex}
\noindent\textbf{Step 2.}~\textit{The second summability result in \eqref{eq:AVFR4NE_convergence}.}
For $W_k$ defined by \eqref{eq:AVFR4NE_Wk_def} and $v^k := x^{k+1} - x^k + \eta_kGx^k$, taking the total expectation of \eqref{eq:AVFR4NE_Wk_bound} in Lemma~\ref{le:AVFR4NE_key_bound_of_Wk}, we get
\myeqn{
\arraycolsep=0.2em
\begin{array}{lcl}
\Expn{W_k} & \leq & \Expn{ W_{k-1}} - (r-2)(k+r+1) \Expn{ \norms{v^{k-1}}^2 } \vspace{1ex}\\
&& + {~}  \frac{2(2k + r +1)}{r} \Expn{ \norms{x^k - x^{k-1}}^2 } + \frac{4\beta^2(\Theta + \hat{\Theta} ) (k + r)^2 }{\rho} \Expn{ U_k }.
\end{array}
}
Utilizing \eqref{eq:AVFR4NE_key_inequality101}, we can apply Lemma~\ref{le:A1_descent} to this inequality to prove that
\myeq{eq:AVFR4NE_proof15a}{
\arraycolsep=0.2em
\begin{array}{llcl}
& \lim_{k\to\infty}\Expn{ W_k } \ \textrm{exists},  && \vspace{1ex}\\
&\sum_{k=1}^{\infty}(k+r+1) \Expn{ \norms{v^{k-1} }^2 } & \leq & W_0 + \big[ \frac{2}{r} +  \frac{4\beta^2(\Theta + \hat{\Theta} )}{\rho\Lambda} \big]C_0 \norms{x^0 - x^{\star}}^2\vspace{1ex}\\
& &\leq & C_1 \norms{x^0 - x^{\star}}^2, \vspace{1ex}\\
& \Expn{W_k} &\leq & C_1 \norms{x^0 - x^{\star}}^2,
\end{array}
}
where we have used $W_0  =  (r - 1)(r + 2)\norms{ x^1 - x^0 + \eta_0Gx^0 }^2 \leq (r - 1)(r + 2)\eta_0^2\norms{Gx^0}^2 \leq \frac{4L^2\beta^2(r-1)}{r+2}\norms{x^0 - x^{\star}}^2$ and $C_1 := \frac{4L^2\beta^2(r-1)}{r+2} + \frac{2C_0}{r} +  \frac{4\beta^2(\Theta + \hat{\Theta} )C_0}{\rho\Lambda}$ as in \eqref{eq:AVFR4NE_constants}.

Now, by the choice of $\eta_k = \frac{2\beta(k + r)}{k+r + 2}$ from \eqref{eq:AVFR4NE_para_update2}, we get $\eta_k \geq \frac{2\beta r}{r+2}$.
Applying this bound and Young's inequality, and using $v^k  = x^{k+1} - x^k + \eta_kGx^k$, we have 
\myeq{eq:AVFR4NE_proof15}{
\arraycolsep=0.2em
\begin{array}{lcl}
\frac{2\beta^2r^2}{(r+2)^2} \norms{Gx^k}^2 & \leq & \frac{\eta_k^2}{2}\norms{Gx^k}^2  \leq  \norms{v^k}^2 + \norms{x^{k+1} - x^k}^2.
\end{array}
}
Combining \eqref{eq:AVFR4NE_proof15}, the second bound of \eqref{eq:AVFR4NE_proof15a}, and the first bound of  \eqref{eq:AVFR4NE_convergence}, we obtain
\myeqn{
\arraycolsep=0.2em
\begin{array}{lcl}
\frac{2\beta^2r^2}{(r+2)^2} \sum_{k=0}^K (2k+r+3) \Expn{ \norms{Gx^k}^2 } & \leq &   2 \sum_{k=0}^K (k+r+2) \Expn{ \norms{v^k}^2 }   \vspace{1ex}\\ 
&& + {~}  \sum_{k=0}^K (2k + r + 3) \Expn{ \norms{x^{k+1} - x^k}^2 }   \vspace{1ex}\\ 
&\leq & \Big(C_0 + \frac{2C_1}{r-2} \Big)  \norms{x^0 - x^{\star}}^2.
\end{array}
}
Taking the limit of this inequality as $K \to \infty$, we prove the second line of  \eqref{eq:AVFR4NE_convergence}.

\vspace{0.75ex}
\noindent\textbf{Step 3.}~\textit{The Big-O and small-o rates of $\Expk{ \norms{x^{k+1} - x^k + \eta_kGx^k}^2 }$.}
Since $(k+r-1)(k+r+2)\norms{x^{k+1} - x^k + \eta_kGx^k}^2 \leq W_k$, the last line of \eqref{eq:AVFR4NE_proof15a} implies that 
\myeq{eq:AVFR4NE_proof15h}{ 
\arraycolsep=0.2em
\begin{array}{lcl}
\Expk{\norms{x^{k+1} - x^k + \eta_kGx^k}^2 } \leq \frac{C_1}{(k+r-1)(k+r+2)}  \norms{x^0 - x^{\star}}^2. 
\end{array}
}
From \eqref{eq:AVFR4NE_Deltak_bound} of Lemma~\ref{le:AVFR4NE_key_bound_of_Wk} and the second line of \eqref{eq:AVFR4NE_key_inequality101}, we can show that 
\myeqn{ 
\arraycolsep=0.2em
\begin{array}{lcl}
\sum_{k=1}^{\infty}(k+r)^2\Expn{\Delta_k} \leq \frac{(\Theta + \hat{\Theta})C_0}{\rho\Lambda}\norms{x^0 - x^{\star}}^2.
\end{array}
}
This summability bound shows that $\lim_{k\to\infty}(k+r)^2\Expn{\Delta_k} = 0$.
Moreover, from the second line of \eqref{eq:AVFR4NE_key_inequality101}, we also have $\lim_{k\to\infty}(k+r)^2\Expn{U_k} = 0$.
Combining these limits and the definition \eqref{eq:AVFR4NE_Wk_def} of $W_k$ and the existence of $\lim_{k\to\infty}\Expn{ W_k }$ from \eqref{eq:AVFR4NE_proof15a}, we have
\myeqn{ 
\arraycolsep=0.2em
\begin{array}{lcl}
{\displaystyle\lim_{k\to\infty}}(k+r-1)(k+r+2) \Expn{ \norms{v^k}^2 } \ \textrm{exists}.
\end{array}
}
Applying Lemma~\ref{le:A2_sum} to this limit and using the second line of \eqref{eq:AVFR4NE_proof15a}, we conclude that 
\myeqn{ 
\arraycolsep=0.2em
\begin{array}{lcl}
{\displaystyle\lim_{k\to\infty}}(k+r-1)(k+r+2) \Expk{ \norms{x^{k+1} - x^k + \eta_kGx^k}^2 } = 0.
\end{array}
}
This also implies that $\lim_{k\to\infty} k^2 \Expk{ \norms{x^{k+1} - x^k + \eta_kGx^k}^2 } = 0$.

\vspace{0.75ex}
\noindent\textbf{Step 4.}~\textit{The Big-O and small-o rates of $\Expk{ \norms{x^{k+1} - x^k}^2 }$.}
For $Z_k$ defined by \eqref{eq:AVFR4NE_Zk_def}, taking the total expectation on both sides of \eqref{eq:AVFR4NE_Zk_bound} from Lemma~\ref{le:AVFR4NE_auxiliary_lemma2}, we get
\myeq{eq:AVFR4NE_proof15f}{ 
\arraycolsep=0.2em
\begin{array}{lcl}
\Expn{Z_k} & \leq & \Expn{ Z_{k-1} } + 4r\beta^2(2k + r + 3) \Expn{ \norms{Gx^k}^2 } + \frac{4\beta^2(\Theta + \hat{\Theta} ) (k + r)^2 }{\rho} \Expn{ U_k }.
\end{array}
}
Summing up this inequality from $k =1$ to $k = K$, and using $\norms{x^1 - x^0}^2 \leq \frac{4r^2L^2\beta^2}{(r+2)^2}\norms{x^0 - x^{\star}}^2$, the second line of \eqref{eq:AVFR4NE_convergence}, and the second line of \eqref{eq:AVFR4NE_key_inequality101}, we get
\myeqn{ 
\arraycolsep=0.2em
\begin{array}{lcl}
\Expn{Z_K} & \leq & \big[ 4r^2L^2\beta^2 + \frac{2(r+2)^2}{r}\big(C_0 + \frac{2C_1}{r-2}\big) + \frac{4\beta^2(\Theta+\hat{\Theta})C_0}{\rho\Lambda}\big]\norms{x^0 - x^{\star}}^2 = C_2\norms{x^0 - x^{\star}}^2,
\end{array}
}
where $C_2 := 4r^2L^2\beta^2 + \frac{2(r+2)^2}{r}\big(C_0 + \frac{2C_1}{r-2}\big) + \frac{4\beta^2(\Theta+\hat{\Theta})C_0}{\rho\Lambda}$ as defined in \eqref{eq:AVFR4NE_constants}.
Since $(k+r+2)^2\norms{x^{k+1} - x^k}^2 \leq Z_k$ due to \eqref{eq:AVFR4NE_Zk_def}, the last inequality implies that $\Expn{\norms{x^{k+1} - x^k}^2} \leq \frac{C_2 \norms{x^0 - x^{\star}}^2 }{(k+r-2)^2}$, proving the first bound of \eqref{eq:AVFR4NE_convergence2b}.

Utilizing the second summability bound of \eqref{eq:AVFR4NE_convergence} and the second line of \eqref{eq:AVFR4NE_key_inequality101}, we can apply Lemma~\ref{le:A1_descent} to \eqref{eq:AVFR4NE_proof15f} to conclude that $\lim_{k\to\infty}\Expn{Z_k}$ exists.
Since $\lim_{k\to\infty}(k+r)^2\Expn{\Delta_k} = 0$ and $\lim_{k\to\infty}(k+r)^2\Expn{U_k} = 0$ as proven above, using these limits and the definition \eqref{eq:AVFR4NE_Zk_def} of $Z_k$, we conclude that $\lim_{k\to\infty}(k+r+2)^2\Expn{\norms{x^{k+1} - x^k}^2}$ exists.
Applying Lemma~\ref{le:A2_sum} to this limit and using the first line of \eqref{eq:AVFR4NE_convergence}, we conclude that 
\myeqn{
\arraycolsep=0.2em
\begin{array}{lcl}
{\displaystyle\lim_{k\to\infty}} k^2 \Expk{ \norms{x^{k+1} - x^k}^2 } = 0.
\end{array}
}
This proves the first limit of \eqref{eq:AVFR4NE_convergence2c}.

\vspace{0.75ex}
\noindent\textbf{Step 5.}~\textit{The Big-O and small-o rates of $\Expk{ \norms{Gx^k}^2 }$.}
Combining \eqref{eq:AVFR4NE_proof15}, \eqref{eq:AVFR4NE_proof15h}, the first bound of  \eqref{eq:AVFR4NE_convergence2b}, and noting that $k+r-1 \leq k+r+2$, we can show that
\myeqn{
\arraycolsep=0.2em
\begin{array}{lcl}
\Expk{ \norms{Gx^k}^2}  & \leq & \frac{(r+2)^2 C_1   \norms{x^0 - x^{\star}}^2  }{2\beta^2r^2 (k+r-1)(k+r+2)} + \frac{(r+2)^2 C_2  \norms{x^0 - x^{\star}}^2  }{2\beta^2r^2 (k+r+2)^2}   \vspace{1ex}\\ 
& \leq & \frac{(r+2)^2(C_1 + C_2)}{2\beta^2r^2 (k+r-1)(k+r + 2 )}     \norms{x^0 - x^{\star}}^2  =  \frac{C_3  \norms{x^0 -  x^{\star}}^2}{\beta^2 (k + r - 1)(k+r+2)},
\end{array}
}
where $C_3 := \frac{(r+2)^2(C_1 + C_2)}{2r^2 }$ as in \eqref{eq:AVFR4NE_constants}.
This exactly proves the second line of \eqref{eq:AVFR4NE_convergence2b}.

Alternatively, we can easily obtain the second limit of \eqref{eq:AVFR4NE_convergence2c}  by combining  \eqref{eq:AVFR4NE_proof15}, the limit $\lim_{k\to\infty} k^2 \Expk{ \norms{x^{k+1} - x^k + \eta_kGx^k}^2 } = 0$, and the first limit of \eqref{eq:AVFR4NE_convergence2c}.
Finally,  the last iteration-complexity statement is a direct consequence of line 2 in \eqref{eq:AVFR4NE_convergence2b}.
\Eproof
\endproof

Theorem~\ref{th:AVFR4NE_convergence} states  both $\BigO{1/k^2}$ and $\SmallO{1/k^2}$ convergence rates on $\Expk{\norms{Gx^k}^2 }$, which improve over non-accelerated methods in, e.g., \cite{alacaoglu2022beyond,alacaoglu2021stochastic,davis2022variance} by a factor of $1/k$ but requires Condition~\eqref{eq:co-coerciveness_of_G}, a stronger condition than \cite{alacaoglu2022beyond,alacaoglu2021stochastic}.
Note that one can simplify the constants $C_s$ for $s=0,1,2,3$ in \eqref{eq:AVFR4NE_constants} by choosing, e.g., $r := 3$.
Nevertheless, $r$ plays an important role to adjust our implementation in Section~\ref{sec:num_experiments}.
We keep it here.

\beforesubsec
\subsection{Almost sure convergence analysis.}\label{subsec:almost_sure_convergence}
\aftersubsec
Our next step is  to prove the almost sure convergence of \eqref{eq:VRKM4ME} as in the following theorem.

\begin{theorem}\label{th:AVFR4NE_as_convergence}
Under the same conditions and parameters as in Theorem~\ref{th:AVFR4NE_convergence}, $\sets{x^k}$ generated by \eqref{eq:VRKM4ME} satisfies the following almost sure limits:
\myeq{eq:AVFR4NE_as_limits}{
\lim_{k\to\infty} k^2\norms{x^{k+1} - x^k}^2 = 0 \quad \textrm{and} \quad \lim_{k\to\infty}k^2\norms{Gx^k}^2 = 0 \quad\textrm{almost surely}.
}
Moreover, $\sets{x^k}$ converges almost surely to $\zer{G}$-valued random variable $x^{\star}$ $($i.e., $x^{\star} \in \zer{G}$$)$.
\end{theorem}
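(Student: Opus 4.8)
The plan is to upgrade the in-expectation results of Theorem~\ref{th:AVFR4NE_convergence} to almost-sure statements using the Robbins--Siegmund supermartingale convergence theorem, and then to extract almost-sure convergence of the iterates themselves. The starting point is the one-iteration descent inequality \eqref{eq:AVFR4NE_key_inequality100} (equivalently \eqref{eq:AVFR4NE_key_estimate1} with the parameter choices $\mu=1$, $t_k = k+r+1$), which has the form $\Expsn{k}{\Ec_k} \leq \Ec_{k-1} - (\textrm{nonnegative terms})$. Since $\Ec_k \geq 0$ (by Condition~\eqref{eq:co-coerciveness_of_G} with $\beta \leq 1/L$, each term of $\Ec_k$ in \eqref{eq:AVFR4NE_Lfunc} is nonnegative), Robbins--Siegmund immediately gives that $\sets{\Ec_k}$ converges almost surely to a finite limit and that $\sum_{k} (k+r)^2 U_k < \infty$ and $\sum_k (2k+r+1)\norms{x^k - x^{k-1}}^2 < \infty$ almost surely. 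In particular $\lim_{k} k^2 \norms{x^{k+1}-x^k}^2$ need not vanish from this alone, so I would then mirror Steps~3--5 of the proof of Theorem~\ref{th:AVFR4NE_convergence} at the pathwise level: apply the deterministic auxiliary lemmas (Lemma~\ref{le:A1_descent} and Lemma~\ref{le:A2_sum}) to the pathwise versions of the sequences $W_k$ and $Z_k$ defined in \eqref{eq:AVFR4NE_Wk_def} and \eqref{eq:AVFR4NE_Zk_def}, whose recursions \eqref{eq:AVFR4NE_Wk_bound} and \eqref{eq:AVFR4NE_Zk_bound} hold after taking $\Expsn{k}{\cdot}$; combined with the almost-sure summability already obtained, this yields that $\lim_k (k+r+2)^2\norms{x^{k+1}-x^k}^2$ and $\lim_k (k+r-1)(k+r+2)\norms{x^{k+1}-x^k+\eta_kGx^k}^2$ exist and are zero almost surely, and then \eqref{eq:AVFR4NE_proof15} gives $\lim_k k^2\norms{Gx^k}^2 = 0$ almost surely, which is \eqref{eq:AVFR4NE_as_limits}.

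For the almost-sure convergence of $\sets{x^k}$ to a solution, I would use an Opial-type argument adapted to the inertial/accelerated setting. From the Robbins--Siegmund conclusion, $\Ec_k$ converges almost surely; unpacking \eqref{eq:AVFR4NE_Lfunc} and using $\iprods{Gx^k,x^k-x^{\star}} - \beta\norms{Gx^k}^2 \geq 0$, $U_k \to 0$, $\Delta_k \to 0$ (a.s.), and $\norms{x^{k+1}-x^k} = o(1/k)$, one deduces that $\norms{r(x^k - x^{\star}) + t_{k+1}(x^{k+1}-x^k)}^2 + \mu r\norms{x^k-x^{\star}}^2$ converges; since $t_{k+1}\norms{x^{k+1}-x^k} \to 0$, this forces $\norms{x^k - x^{\star}}$ to converge almost surely, for every fixed $x^{\star}\in\zer{G}$. (A measurability subtlety: one applies Robbins--Siegmund on a single probability-one event simultaneously for all $x^\star$ in a countable dense subset of $\zer{G}$, then extends by continuity, using that $\zer{G}$ is closed and convex under co-coercivity.) Next, $\norms{Gx^k} \to 0$ almost surely together with $\frac{1}{L}$-co-coercivity (hence Lipschitz-type continuity of $G$ along bounded sequences, or demiclosedness of $\gra{G}$) implies every cluster point of the bounded sequence $\sets{x^k}$ lies in $\zer{G}$; combining this with the fact that $\norms{x^k - \bar{x}}$ converges for each such cluster point $\bar{x}$ gives uniqueness of the cluster point and hence $x^k \to x^{\star}$ almost surely.

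The main obstacle I anticipate is the passage from ``$\norms{x^k - x^{\star}}$ converges for each fixed $x^{\star}$ in a probability-one event'' to ``the probability-one event can be taken uniform over all $x^{\star}\in\zer{G}$.'' This requires care: one fixes a countable dense $D \subseteq \zer{G}$, intersects the countably many probability-one events (one per element of $D$), and then argues that convergence of $\norms{x^k - x^{\star}}$ for all $x^{\star}\in D$ together with boundedness of $\sets{x^k}$ suffices to run the Opial argument — indeed, if $\bar{x}$ is a cluster point in $\zer{G}$, pick $x^{\star}_j \in D$ with $x^{\star}_j \to \bar{x}$ and use $\big|\,\norms{x^k - \bar{x}} - \norms{x^k - x^{\star}_j}\,\big| \leq \norms{\bar{x} - x^{\star}_j}$ to conclude $\norms{x^k - \bar{x}}$ converges as well. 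A secondary technical point is verifying demiclosedness: since $G$ is $\frac{1}{L}$-co-coercive it is maximally monotone and single-valued continuous, so $x^{k_j}\to\bar x$ and $Gx^{k_j}\to 0$ indeed give $G\bar x = 0$; this is routine but should be stated. The remaining steps are pathwise analogues of estimates already established in the proof of Theorem~\ref{th:AVFR4NE_convergence}, so they should go through with only notational changes.
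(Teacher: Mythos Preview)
Your plan is essentially the same as the paper's: apply Robbins--Siegmund to \eqref{eq:AVFR4NE_key_inequality100}, then rerun Steps~3--5 of Theorem~\ref{th:AVFR4NE_convergence} pathwise, and finish with an Opial-type argument. Two small but genuine gaps need attention.

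First, the recursions \eqref{eq:AVFR4NE_Wk_bound} and \eqref{eq:AVFR4NE_Zk_bound} (and the analogous one for $(k+r)^2\Delta_k$) are inequalities for $\Expsn{k}{W_k}$, $\Expsn{k}{Z_k}$, $\Expsn{k}{\Delta_k}$, not for the raw sequences. You therefore cannot apply the deterministic Lemma~\ref{le:A1_descent} to them; you must use Lemma~\ref{le:RS_lemma} (Robbins--Siegmund) at each of those steps as well, exactly as the paper does. Lemma~\ref{le:A2_sum} is deterministic and can then be applied pathwise once the almost-sure limits and summabilities are in hand.

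Second, in extracting the convergence of $\norms{x^k-x^{\star}}$ from the convergence of $\Ec_k$, the nonnegativity of $\iprods{Gx^k,x^k-x^{\star}}-\beta\norms{Gx^k}^2$ is not enough: this term carries the growing weight $4r\beta(k+r)$ in \eqref{eq:AVFR4NE_Lfunc}, so you must show it \emph{vanishes}, not merely that it is nonnegative. Likewise you need $(k+r)^2U_k\to 0$ and $(k+r)^2\Delta_k\to 0$, not just $U_k,\Delta_k\to 0$. The missing ingredient is almost-sure boundedness of $\norms{x^k-x^{\star}}$, which follows from $\Ec_k\geq r\norms{x^k-x^{\star}}^2$ (the $\mu r$-term in \eqref{eq:AVFR4NE_Lfunc}); once you have $\norms{x^k-x^{\star}}\leq M_0$, Cauchy--Schwarz plus $k\norms{Gx^k}\to 0$ and $k\norms{x^{k+1}-x^k}\to 0$ kill both the first term and the cross term $2r t_{k+1}\iprods{x^{k+1}-x^k,x^k-x^{\star}}$ in the expansion of $\Ec_k$, and then $\lim_k\norms{x^k-x^{\star}}^2$ indeed exists.

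Your handling of the ``uniform over $x^{\star}$'' measurability issue via a countable dense subset of $\zer{G}$ is more explicit than the paper, which simply invokes a stochastic Opial lemma from the literature; either route is fine.
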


\proof{\textbf{Proof.}}
First, since $\zer{G} \neq\emptyset$ by Assumption~\ref{as:A1}, we fix $x^{\star} \in \zer{G}$.
Then, $\Ec_k$ in \eqref{eq:AVFR4NE_Lfunc} depends on such an $x^{\star}$.
For $U_k = \frac{1}{n}\sum_{i=1}^n\norms{G_ix^k - G_ix^{k-1}}^2$, we can rewrite \eqref{eq:AVFR4NE_key_inequality100} as
\myeq{eq:AVFR4NE_as_proof1}{
\arraycolsep=0.2em
\begin{array}{lcl}
\Expsn{k}{ \Ec_k } & \leq & \Ec_{k-1} -  \big[ \Lambda (k+r)^2 U_k + (2k+r + 1) \norms{x^k - x^{k-1}}^2\big].
\end{array}
}
Next, applying the Supermartingale Theorem, i.e. Lemma~\ref{le:RS_lemma}, to \eqref{eq:AVFR4NE_as_proof1} with $X_k := \Ec_k$, $\alpha_k := 0$, $V_k := \Lambda (k+r)^2 U_k + (2k+r + 1) \norms{x^k - x^{k-1}}^2$, and $R_k := 0$, we get
\myeq{eq:AVFR4NE_as_proof2}{
\arraycolsep=0.2em
\begin{array}{ll}
& \lim_{k\to\infty}\Ec_k \quad \textrm{exists almost surely}, \vspace{1ex}\\
& \sum_{k=1}^{\infty} (2k+r + 1) \norms{x^k - x^{k-1}}^2  < +\infty \quad \textrm{almost surely}, \vspace{1ex}\\
& \sum_{k=1}^{\infty}  (k+r)^2 U_k  < +\infty \quad \textrm{almost surely}.
\end{array}
}
Now, to prove the limits \eqref{eq:AVFR4NE_as_limits}, we follow the same arguments from \textbf{Step 3} to \textbf{Step 5} in the proof of Theorem~\ref{th:AVFR4NE_convergence} but without taking the total expectation and repeatedly applying Lemma~\ref{le:RS_lemma}.
We skip their detailed derivations  to avoid repetition. 
The only step we need to clarify is  \textbf{Step~3}, where we need to prove $\lim_{k\to\infty}(k+r)^2\Delta_k = 0$ almost surely.
Indeed, from the last line of \eqref{eq:ub_SG_estimator} and $(1-\gamma_k)^2 = \frac{r^2}{(k+r)^2}$, we have
\myeq{eq:AVFR4NE_as_proof2b}{
\arraycolsep=0.2em
\begin{array}{lcl}
(k + r)^2 \Expsn{k}{\Delta_k} &\leq & (k  + r - 1)^2\Delta_{k-1}  - \rho(k-r-1)^2\Delta_{k-1}  + \Theta(k + r)^2  U_k \vspace{1ex}\\ 
&& + {~}  \hat{\Theta}(k+r-1)^2U_{k-1}.
\end{array} 
}
Since $\sum_{k=1}^{\infty}(k+r)^2U_k < +\infty$ almost surely due to \eqref{eq:AVFR4NE_as_proof2}, applying Lemma~\ref{le:RS_lemma} to \eqref{eq:AVFR4NE_as_proof2b}, we conclude that 
\myeq{eq:AVFR4NE_as_proof2c}{ 
\hspace{-1ex}
\arraycolsep=0.2em
\begin{array}{lcl}
\lim_{k\to\infty}(k+r)^2\Delta_k  \ \textrm{exists} \quad \textrm{and} \quad \sum_{k=1}^{\infty}(k+r-1)^2\Delta_{k-1} <+\infty~~\textrm{almost surely.}
\end{array}
\hspace{-3ex}
}
The last summability bound also implies that $\lim_{k\to\infty}(k+r)^2\Delta_k = 0$ almost surely.

Finally, we prove that $\sets{x^k}$ converges almost surely to a $\zer{G}$-valued random variable.
Indeed, we first substitute $t_k = k + r + 1$ and $\mu = 1$ into $\Ec_k$ from \eqref{eq:AVFR4NE_Lfunc} to rewrite it as follows:
\myeq{eq:AVFR4NE_Lfunc_as}{
\arraycolsep=0.2em
\begin{array}{lcl}
\Ec_k & = & 4r\beta(k + r) \big[ \iprods{Gx^k, x^k - x^{\star}} - \beta\norms{ Gx^k }^2 \big]  + (r^2 + r) \norms{x^k - x^{\star}}^2  \vspace{1ex}\\ 
&& + {~}  (k+r+2)^2\norms{x^{k+1} - x^k}^2 + 2r(k+r+2)\iprods{x^{k+1} - x^k, x^k - x^{\star}}  \vspace{1ex}\\
&& + {~}  \frac{4 \beta^2 \hat{\Theta}}{\rho }  (k+r)^2 U_k   +  \frac{4  \beta^2(1-\rho)}{ \rho } (k+r)^2 \Delta_k.
\end{array}
}
Next, since $\lim_{k\to\infty}\Ec_k$ exists almost surely from \eqref{eq:AVFR4NE_Lfunc_as}, and $\Ec_k \geq r\norms{x^k - x^{\star}}^2$ due to \eqref{eq:AVFR4NE_Lfunc}, there exists $M_0 > 0$ such that $\norms{x^k - x^{\star}} \leq M_0$ almost surely for all $k\geq 0$.
Using this fact and the Cauchy-Schwarz inequality, one can easily show that
\myeqn{ 
\arraycolsep=0.2em
\begin{array}{lcl}
(k+r) \vert  \iprods{Gx^k, x^k - x^{\star}}\vert &\leq &  M_0(k+r)\norms{Gx^k}, \vspace{1ex}\\
(k+r+2)\vert \iprods{x^{k+1} - x^k, x^k - x^{\star}} \vert &\leq &  M_0(k+r+2)\norms{x^{k+1} - x^k}.
\end{array}
}
Combining these inequalities and the limits in \eqref{eq:AVFR4NE_as_limits}, almost surely, we get
\myeqn{ 
\arraycolsep=0.2em
\begin{array}{lcl}
\lim_{k\to\infty} (k+r) \vert  \iprods{Gx^k, x^k - x^{\star}}\vert & = &  0, \vspace{1ex}\\
\lim_{k\to\infty} (k+r+2)\vert \iprods{x^{k+1} - x^k, x^k - x^{\star}} \vert & = & 0.
\end{array}
}
Now, we have $\lim_{k\to\infty} (k+r)^2\Delta_k = 0$ from \eqref{eq:AVFR4NE_as_proof2c}.
Similarly, the last line of \eqref{eq:AVFR4NE_as_proof2} also implies that $\lim_{k\to\infty}(k+r)^2U_k = 0$ almost surely.

Collecting all the limits proven above, \eqref{eq:AVFR4NE_as_limits}, and the existence of $\lim_{k\to\infty}\Ec_k$ from \eqref{eq:AVFR4NE_as_proof2}, we can easily show from \eqref{eq:AVFR4NE_Lfunc_as} that $\lim_{k\to\infty}\norms{x^k - x^{\star}}^2$ exists almost surely.

Finally, since $G$ is co-coercive by Condition~\eqref{eq:co-coerciveness_of_G}, it is continuous.
Moreover, we have $\lim_{k\to\infty}\norms{Gx^k} = 0$ almost surely by \eqref{eq:AVFR4NE_as_limits} and $\sets{x^k}$ is bounded almost surely.
Combining these facts and the  existence of almost sure limit $\lim_{k\to\infty}\norms{x^k - x^{\star}}^2$, we can apply Lemma~\ref{le:A3_lemma} in the appendix to show that $\sets{x^k}$ converges to a $\zer{G}$-valued random variable almost surely.
\Eproof
\endproof

\beforesubsec
\subsection{Complexity analysis for the loopless-SVRG and SAGA variants.}\label{subsec:SVRG_SAGA}
\aftersubsec
Let us specify Theorem~\ref{th:AVFR4NE_convergence} to analyze the expected oracle complexity of \eqref{eq:VRKM4ME}  using \eqref{eq:SVRG_estimator} and \eqref{eq:SAGA_estimator}.
The proof of the following results are deferred to Appendix~\ref{apdx:co:SVRG_SAGA_complexity}.

\begin{corollary}\label{co:SVRG_complexity}
Let $\sets{x^k}$ be generated by \eqref{eq:VRKM4ME} to solve \eqref{eq:ME} using the \textbf{loopless SVRG estimator} \eqref{eq:SVRG_estimator} and the same parameters as in Theorem~\ref{th:AVFR4NE_convergence} with $r := 3$ such that $1 \leq b\mbf{p}^2 \leq 32$.
Then, under the same conditions as in Theorem \ref{th:AVFR4NE_convergence}, if we choose $\beta := \frac{b\mbf{p}^2}{2L(b\mbf{p}^2 + 64)}$, then $\beta \in \left[ \frac{1}{130L}, \frac{1}{6L}\right]$.
Moreover, we have
\vspace{-0.5ex}
\myeq{eq:AVFR4NE_for_SVRG}{
\arraycolsep=0.2em
\begin{array}{lcl c l c l}
\Expn{\norms{x^{k+1} - x^k}^2 } & \leq  & \dfrac{ C_2 \norms{x^0 - x^{\star}}^2 }{(k+r+2)^2} \quad & \text{and}  & \quad {\displaystyle\lim_{k\to\infty}} k^2 \Expn{\norms{x^{k+1} - x^k}^2 } & = & 0,   \vspace{0.5ex}\\ 
\Expn{\norms{Gx^k}^2 }  & \leq & \dfrac{C_3 \norms{x^0 - x^{\star}}^2}{ \beta^2 (k+r-1)(k+r + 2) } \quad &  \text{and}  & \quad {\displaystyle\lim_{k\to\infty}}k^2 \Expn{\norms{Gx^k}^2 } & = & 0,
\end{array}
\vspace{-0.5ex}
}
where $C_2 \leq 2360$ and $C_3 \leq 3353$ are explicitly given in \eqref{eq:AVFR4NE_constants}.

For a given $\epsilon > 0$, if we choose $\mbf{p} = \BigOs{\frac{1}{n^{1/3}}}$ and $b = \BigOs{n^{2/3}}$, then \eqref{eq:VRKM4ME} requires $\BigOs{ n + \frac{n^{2/3}L\Rc_0}{\epsilon} }$ evaluations of $G_i$ to achieve $\Expn{\norms{Gx^k}^2 } \leq \epsilon^2$, where $\Rc_0 := \norms{x^0 - x^{\star}}$.
\end{corollary}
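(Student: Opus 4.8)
\textbf{Proof proposal for Corollary~\ref{co:SVRG_complexity}.}
The plan is to instantiate Theorem~\ref{th:AVFR4NE_convergence} with the loopless-SVRG estimator, using the parameters $\rho = \tfrac{\mbf{p}}{2}$, $\Theta = \tfrac{4 - 6\mbf{p} + 3\mbf{p}^2}{b\mbf{p}}$, and $\hat{\Theta} = \tfrac{2(2 - 3\mbf{p} + \mbf{p}^2)}{b\mbf{p}}$ supplied by Lemma~\ref{le:SVRG_estimator}. First I would bound $\Theta + \hat{\Theta}$: since $\mbf{p} \in (0,1)$, crude but uniform bounds give $\Theta \le \tfrac{4}{b\mbf{p}}$ and $\hat{\Theta} \le \tfrac{4}{b\mbf{p}}$, so $\Theta + \hat{\Theta} \le \tfrac{8}{b\mbf{p}}$; together with $\tfrac{r+1}{\rho} = \tfrac{8}{\mbf{p}}$ (for $r=3$) this yields $\tfrac{(r+1)(\Theta+\hat{\Theta})}{\rho} \le \tfrac{64}{b\mbf{p}^2}$, hence $\bar{\beta} = \tfrac{\rho}{[\rho + (r+1)(\Theta+\hat{\Theta})]L} \ge \tfrac{1}{L(1 + 64/(b\mbf{p}^2))} = \tfrac{b\mbf{p}^2}{L(b\mbf{p}^2 + 64)}$. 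Choosing $\beta := \tfrac{b\mbf{p}^2}{2L(b\mbf{p}^2 + 64)} = \tfrac12\bar{\beta}$ then automatically satisfies $\beta \in (0,\bar{\beta})$, and using $1 \le b\mbf{p}^2 \le 32$ we get $\tfrac{b\mbf{p}^2}{b\mbf{p}^2+64} \in [\tfrac{1}{65}, \tfrac{32}{96}] = [\tfrac{1}{65}, \tfrac{1}{3}]$, so $\beta \in [\tfrac{1}{130 L}, \tfrac{1}{6L}]$.

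Next I would verify that $C_2$ and $C_3$ from \eqref{eq:AVFR4NE_constants} are bounded by the claimed numerical constants. With $r = 3$ fixed and $\beta L \in [\tfrac{1}{130}, \tfrac16]$, the terms $rL^2\beta^2$, $r^2L^2\beta^2$, etc., are all $\BigOs{1}$. The remaining ingredient is $\psi = \tfrac{4\beta^2(\Theta+\hat{\Theta})}{\rho\Lambda}$, where $\Lambda = \tfrac{4\beta(\rho - [\rho + (r+1)(\Theta+\hat{\Theta})]L\beta)}{L\rho(r+1)}$. Since $\beta = \tfrac12\bar{\beta}$, the bracket $\rho - [\rho + (r+1)(\Theta+\hat{\Theta})]L\beta$ equals exactly $\tfrac12\rho$ (because at $\beta = \bar\beta$ it vanishes), so $\Lambda = \tfrac{2\beta}{L(r+1)} = \tfrac{\beta}{2L}$ for $r = 3$; hence $\psi = \tfrac{4\beta^2(\Theta+\hat{\Theta})}{\rho \cdot \beta/(2L)} = \tfrac{8\beta L(\Theta + \hat{\Theta})}{\rho}$. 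Using $\tfrac{\Theta+\hat{\Theta}}{\rho} \le \tfrac{16}{b\mbf{p}^2}$ and $\beta L \le \tfrac{b\mbf{p}^2}{6(b\mbf{p}^2+64)} \le \tfrac{b\mbf{p}^2}{6 \cdot 64}$ shows $\psi \le 8 \cdot \tfrac{16}{6\cdot 64} = \tfrac{1}{3}$, a bounded quantity. Plugging these into the formulas for $C_0, C_1, C_2, C_3$ and substituting $r = 3$, $\beta L \le \tfrac16$ gives explicit numerical upper bounds; I would carry out this arithmetic to confirm $C_2 \le 2360$ and $C_3 \le 3353$. The convergence rates \eqref{eq:AVFR4NE_for_SVRG} then follow verbatim from parts (b) and (c) of Theorem~\ref{th:AVFR4NE_convergence}, noting $\Delta_0 = 0$ holds since $\widetilde{S}^0 = S^0$ forces the SVRG residual to vanish at $k=0$.

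Finally, for the oracle complexity: each iteration of \eqref{eq:VRKM4ME} with the SVRG estimator evaluates $G_i$ at $x^k$ and $x^{k-1}$ for $i \in \Bc_k$ (cost $\BigOs{b}$), plus a full pass of cost $n$ whenever $w^k$ is refreshed, which happens with probability $\mbf{p}$; so the expected per-iteration cost is $\BigOs{b + \mbf{p} n}$, plus the initial $\BigOs{n}$ to form $\widetilde{S}^0 = S^0$. With $\mbf{p} = \BigOs{n^{-1/3}}$ and $b = \BigOs{n^{2/3}}$ the constraint $1 \le b\mbf{p}^2 \le 32$ is met and the per-iteration cost is $\BigOs{n^{2/3}}$. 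Since line 2 of \eqref{eq:AVFR4NE_for_SVRG} with $\beta \ge \tfrac{1}{130L}$ gives $\Expn{\norms{Gx^k}^2} = \BigOs{L^2 \Rc_0^2 / k^2}$, reaching $\epsilon^2$ requires $K = \BigOs{L\Rc_0/\epsilon}$ iterations, for a total of $\BigOs{n + n^{2/3} \cdot L\Rc_0/\epsilon}$ evaluations of $G_i$, as claimed.

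\textbf{Main obstacle.} The delicate part is the bookkeeping in the second paragraph: tracking how $\Lambda$, $\psi$, $C_1$ simplify under the specific choice $\beta = \tfrac12\bar\beta$ and $r = 3$ tightly enough to land below the stated numerical constants $2360$ and $3353$ — the chain of estimates for $\Theta + \hat{\Theta}$ must be loose enough to be uniform in $\mbf{p}$ yet tight enough that the resulting $C_s$ do not blow up past the target values. Everything else is a mechanical substitution into results already proved.
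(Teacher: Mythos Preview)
Your approach is essentially identical to the paper's: upper-bound $\Theta,\hat\Theta$ by $4/(b\mbf{p})$ each, take $r=3$ and $\beta=\bar\beta/2$, deduce $\Lambda=\beta/(2L)$, bound $\psi$, and substitute into the $C_s$ formulas; the complexity count is likewise the same.

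One arithmetic slip to fix: you write $L\beta \le \tfrac{b\mbf{p}^2}{6(b\mbf{p}^2+64)}$, but in fact $L\beta = \tfrac{b\mbf{p}^2}{2(b\mbf{p}^2+64)}$, so your conclusion $\psi \le \tfrac13$ is unjustified. The paper computes $\psi = \tfrac{64}{64+b\mbf{p}^2} \in \big(\tfrac23,1\big)$ exactly (using the simplified values $\Theta=\hat\Theta=4/(b\mbf{p})$ as definitions, not just upper bounds) and then uses $\psi \le 1$ when checking $C_2 \le 2360$ and $C_3 \le 3353$. With this correction your argument goes through unchanged.
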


\begin{corollary}\label{co:SAGA_complexity}
Let $\sets{x^k}$ be generated by \eqref{eq:VRKM4ME} to solve \eqref{eq:ME} using the \textbf{SAGA estimator} \eqref{eq:SAGA_estimator} and the same parameters as in Theorem~\ref{th:AVFR4NE_convergence} with $r := 3$ such that $1 \leq b \leq \min\sets{n, 16n^{2/3}}$.
Then, under the same conditions as in Theorem \ref{th:AVFR4NE_convergence}, if we choose $\beta :=  \frac{b^3}{2L(b^3 + 64n^2)}$, then $\beta \in \big[\frac{1}{2L(1+64n^2)}, \frac{1}{4L}\big]$.
Moreover, we have 
\vspace{-0.5ex}
\myeq{eq:AVFR4NE_for_SAGA}{
\arraycolsep=0.2em
\begin{array}{lcl c l c l}
\Expn{\norms{x^{k+1} - x^k}^2 } & \leq  & \dfrac{ C_2 \norms{x^0 - x^{\star}}^2 }{(k+r+2)^2} \quad & \text{and}  & \quad {\displaystyle\lim_{k\to\infty}} k^2 \Expn{\norms{x^{k+1} - x^k}^2 } & = & 0,   \vspace{0.5ex}\\ 
\Expn{\norms{Gx^k}^2 }  & \leq & \dfrac{C_3 \norms{x^0 - x^{\star}}^2}{ \beta^2 (k+r-1)(k+r + 2) } \quad &  \text{and}  & \quad {\displaystyle\lim_{k\to\infty}} k^2 \Expn{\norms{Gx^k}^2 } & = & 0,
\end{array}
\vspace{-0.5ex}
}
where $C_2 \leq 2559$ and $C_3 \leq 3636$ are explicitly given in \eqref{eq:AVFR4NE_constants}.

For a given $\epsilon > 0$, if we choose $b = \BigOs{n^{2/3}}$, then \eqref{eq:VRKM4ME}  requires $\BigOs{ n + \frac{ n^{2/3}L\Rc_0}{\epsilon} }$ evaluations of $G_i$  to achieve $\Expn{\norms{Gx^k}^2} \leq \epsilon^2$, where $\Rc_0 := \norms{x^0 - x^{\star}}$.
\end{corollary}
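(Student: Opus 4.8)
The plan is to instantiate Theorem~\ref{th:AVFR4NE_convergence} with the SAGA estimator from Lemma~\ref{le:SAGA_estimator}, for which $\rho = \frac{b}{2n}$, $\Theta = \frac{2(n-b)(2n+b)+b^2}{nb^2}$, and $\hat\Theta = \frac{2(n-b)(2n+b)}{nb^2}$. First I would bound $\Theta + \hat\Theta$: since $n - b \le n$ and $2n + b \le 3n$ (as $b \le n$), we get $\Theta + \hat\Theta \le \frac{4(n-b)(2n+b) + b^2}{nb^2} \le \frac{12n^2 + b^2}{nb^2} \le \frac{13n^2}{nb^2} = \frac{13n}{b^2}$ (using $b \le n$ so $b^2 \le n^2$). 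Next, with $r = 3$, the admissibility threshold $\bar\beta = \frac{\rho}{[\rho + (r+1)(\Theta+\hat\Theta)]L} = \frac{b/(2n)}{[b/(2n) + 4(\Theta+\hat\Theta)]L}$. Plugging in the chosen $\beta := \frac{b^3}{2L(b^3 + 64n^2)}$, I need to verify $\beta < \bar\beta$, which after clearing denominators reduces to checking $\frac{b}{2n} \cdot \frac{b^3 + 64n^2}{b^3} > \frac{b}{2n} + 4(\Theta+\hat\Theta)$, i.e. $\frac{32n}{b^2} \cdot \frac{b}{n} \cdot \dots$; more directly, $\beta \bar\beta^{-1} = \frac{b^3}{b^3 + 64n^2} \cdot \frac{b/(2n) + 4(\Theta+\hat\Theta)}{b/(2n)} = \frac{b^3}{b^3+64n^2}\big(1 + \frac{8n}{b}(\Theta+\hat\Theta)\big)$, and using $\Theta+\hat\Theta \le \frac{13n}{b^2}$ gives an upper bound $\frac{b^3}{b^3+64n^2}\cdot\frac{b^2 + 104n^2}{b^2}$, which one checks is $< 1$ when $b \le 16n^{2/3}$ (so $b^3 \le 4096 n^2$, and the inequality $b^3(b^2 + 104 n^2) < b^2(b^3 + 64 n^2)$ fails — so I must be more careful and instead use the sharper relation $\Theta + \hat\Theta \le \frac{6n}{b^2}\cdot 2 = \dots$). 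The cleanest route is: the condition $b^3 \le 64 n^2 \cdot \frac{\rho}{4(\Theta+\hat\Theta)L} \cdot (\dots)$ — so I will simply verify $\beta \le \bar\beta$ by showing $[\rho + 4(\Theta+\hat\Theta)]L\beta \le \rho$, i.e. $4(\Theta+\hat\Theta)L\beta \le \rho(1 - L\beta)$; with $L\beta = \frac{b^3}{2(b^3+64n^2)} \le \frac14$, the RHS is $\ge \frac{3\rho}{4} = \frac{3b}{8n}$, and the LHS is $\le 4 \cdot \frac{13n}{b^2} \cdot \frac{b^3}{2(b^3+64n^2)} = \frac{26 b n}{b^3 + 64 n^2} \le \frac{26bn}{64n^2} = \frac{13b}{32n}$, and indeed $\frac{13b}{32n} \le \frac{3b}{8n} = \frac{12b}{32n}$ fails by a hair, so I will instead sharpen $\Theta+\hat\Theta$ using $b \le 16n^{2/3}$ more aggressively, or equivalently absorb the $64 n^2$ term — this bookkeeping is the routine part and I will carry it through to land on a clean constant.

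Next I would establish the stepsize bounds $\beta \in \big[\frac{1}{2L(1+64n^2)}, \frac{1}{4L}\big]$: the upper bound is immediate since $\frac{b^3}{b^3 + 64n^2} \le \frac12$, giving $\beta \le \frac{1}{4L}$; the lower bound follows from $b \ge 1$, so $\beta = \frac{b^3}{2L(b^3 + 64n^2)} \ge \frac{1}{2L(1 + 64n^2)}$ (monotonicity of $t \mapsto \frac{t}{t+c}$). Then the convergence bounds \eqref{eq:AVFR4NE_for_SAGA} are a direct transcription of parts (b) and (c) of Theorem~\ref{th:AVFR4NE_convergence} with $\Rc_0 = \norms{x^0-x^\star}$, once I verify that $\Delta_0 = 0$, which holds because $\widehat G_i^0 = G_i x^0$ and $\gamma_0 = 0$ (from $t_0 = r+1$, $\mu=1$), so $\Delta_0 = \frac{1}{nb}\sum_i \norms{G_ix^0 - 0 - G_ix^0}^2 = 0$. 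For the explicit numerical constants $C_2 \le 2559$, $C_3 \le 3636$, I would substitute $r = 3$, $L\beta \le \frac14$ (so $L^2\beta^2 \le \frac{1}{16}$), $\rho = \frac{b}{2n}$, $\psi = \frac{4\beta^2(\Theta+\hat\Theta)}{\rho\Lambda}$, and the bound $(r+1)(\Theta+\hat\Theta)L\beta \le$ (something $< \rho$) into the formulas \eqref{eq:AVFR4NE_constants} for $C_0, C_1, C_2, C_3$; the key is that $\Lambda = \frac{4\beta(\rho - [\rho+(r+1)(\Theta+\hat\Theta)]L\beta)}{L\rho(r+1)}$ is bounded below by a positive multiple of $\beta/L$ so that $\psi$ stays $\BigO{1}$, and then plug into $C_0 = r(1 + 3r + 8rL^2\beta^2) \le 3(1 + 9 + \frac{24}{16}) = 34.5$, and propagate upward. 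This is pure arithmetic.

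Finally, for the oracle complexity: each iteration of the SAGA variant evaluates $G_i$ for $i \in \Bc_k$, costing $b$ oracle calls, plus the initialization costs $n$ calls to form $\widehat G_i^0 = G_ix^0$ for all $i$. From \eqref{eq:AVFR4NE_for_SAGA}, $\Expn{\norms{Gx^k}^2} \le \epsilon^2$ is guaranteed once $\frac{C_3\Rc_0^2}{\beta^2(k+r-1)(k+r+2)} \le \epsilon^2$, i.e. after $k = \BigOs{\frac{\sqrt{C_3}\Rc_0}{\beta\epsilon}} = \BigOs{\frac{L\Rc_0}{\epsilon}}$ iterations (since $\beta \ge \frac{1}{130L}$-type lower bound — here $\beta = \Theta(1/L)$ when $b = \Theta(n^{2/3})$, because then $b^3 = \Theta(n^2)$ and $\frac{b^3}{b^3+64n^2} = \Theta(1)$). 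So the total is $n + b \cdot k = n + \BigOs{n^{2/3}} \cdot \BigOs{\frac{L\Rc_0}{\epsilon}} = \BigOs{n + \frac{n^{2/3}L\Rc_0}{\epsilon}}$. The main obstacle, as flagged above, is the admissibility check $\beta < \bar\beta$ together with producing honest explicit constants $C_2, C_3$: one must bound $\Theta+\hat\Theta$ and $\Lambda^{-1}$ tightly enough as functions of $b, n$ and then verify that the chosen $\beta$ together with $b \le \min\{n, 16n^{2/3}\}$ keeps everything in range — everything else is substitution into Theorem~\ref{th:AVFR4NE_convergence}.
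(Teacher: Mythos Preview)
Your overall strategy --- instantiate Theorem~\ref{th:AVFR4NE_convergence} with the SAGA constants from Lemma~\ref{le:SAGA_estimator}, check $\Delta_0 = 0$, read off \eqref{eq:AVFR4NE_convergence2b}--\eqref{eq:AVFR4NE_convergence2c}, and count oracle calls --- is exactly the paper's route. The one place you get stuck, the admissibility check $\beta < \bar\beta$, is handled in the paper by a simplification you are missing. Rather than working with the exact $\Theta = \frac{2(n-b)(2n+b)+b^2}{nb^2}$ and $\hat\Theta = \frac{2(n-b)(2n+b)}{nb^2}$, the paper first observes $2(n-b)(2n+b) + b^2 \le 4n^2$ and $2(n-b)(2n+b) \le 4n^2$, and then \emph{replaces} $\Theta$ and $\hat\Theta$ by the cruder upper bounds $\Theta := \hat\Theta := \frac{4n}{b^2}$. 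This is legitimate because Definition~\ref{de:ub_SG_estimator} only demands an upper bound in its third line, so the estimator still satisfies it with the larger constants, and Theorem~\ref{th:AVFR4NE_convergence} can be invoked with those. With $\Theta + \hat\Theta = \frac{8n}{b^2}$, $\rho = \frac{b}{2n}$, and $r+1 = 4$, one gets $\bar\beta = \frac{\rho}{L[\rho + 4(\Theta+\hat\Theta)]} = \frac{b^3}{L(b^3 + 64n^2)}$, so the stated $\beta$ is \emph{exactly} $\bar\beta/2$ and admissibility is automatic. This also yields the clean identity $\Lambda = \frac{\beta}{2L}$, whence $\psi = \frac{4\beta^2(\Theta+\hat\Theta)}{\rho\Lambda} = \frac{64n^2}{b^3 + 64n^2} \in [\tfrac12,1]$, and the numerical bounds on $C_0,\dots,C_3$ then follow by direct substitution (your $C_0 \le 34.5$ is correct; the rest propagate similarly).

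Your attempt with $\Theta + \hat\Theta \le \frac{13n}{b^2}$ actually \emph{would} work for admissibility --- with that bound $\bar\beta = \frac{b^3}{L(b^3+104n^2)}$, and $\beta < \bar\beta$ reduces to $b^3 + 104n^2 < 2b^3 + 128n^2$, which is trivially true --- but you tried a more convoluted route and tripped. Still, even granting admissibility, your looser bound on $\Theta+\hat\Theta$ would not give $\Lambda = \frac{\beta}{2L}$ exactly, so bounding $\psi$ and the $C_i$ would remain messier than necessary. The paper's move of rounding $\Theta,\hat\Theta$ up to $\frac{4n}{b^2}$ before invoking the theorem is what makes all the downstream arithmetic clean. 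Your complexity argument is fine (the paper writes the per-iteration cost as $2b$ rather than $b$, but this does not affect the $\mathcal{O}$-bound).
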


Both $C_2$ and $C_3$ in Corollaries \ref{co:SVRG_complexity} and \ref{co:SAGA_complexity} can be refined to get smaller upper bounds.
Nevertheless, we do not try to numerically optimize them here.
Corollaries \ref{co:SVRG_complexity} and \ref{co:SAGA_complexity} show that both the SVRG and SAGA variants have an oracle complexity of $\BigOs{n + \frac{n^{2/3}L\Rc_0}{\epsilon}}$ under appropriate choices of the mini-batch size $b$ and probability $\mbf{p}$.
This complexity is better than deterministic accelerated methods by a factor of $n^{1/3}$ and non-accelerated variance-reduction methods such as  \cite{alacaoglu2022beyond,alacaoglu2021stochastic,davis2022variance} by a factor of $\frac{1}{\epsilon}$.
As suggested by our theory, one can choose $\beta \in [\frac{1}{6L}, \frac{1}{4L}]$ when implementing \eqref{eq:VRKM4ME}.

\vspace{1ex}
\beforesec
\section{Linear Convergence under Strong Quasi-monotonicity.}\label{sec:star_monotone_convergence}
\aftersec
In this section, we establish a linear convergence rate and estimate the oracle complexity of \eqref{eq:VRKM4ME} under Assumption~\ref{as:A1} and the following additional assumption:
\begin{assumption}\label{as:A3}
$G$ in \eqref{eq:ME} is $\sigma$-strongly quasi-monotone, i.e. there exist $x^{\star} \in \zer{G}$ and $\sigma > 0$ such that $\iprods{Gx, x - x^{\star}} \geq \sigma\norms{x - x^{\star}}^2$ for all $x \in \dom{G}$.
\end{assumption}
If $G$ is strongly monotone with a strong monotonicity modulus $\sigma > 0$, then it satisfies Assumption~\ref{as:A3}.
Hence, this assumption is weaker than the strong monotonicity of $G$.
Note that we do not require each summand $G_i$ to be strongly monotone.
Assumption~\ref{as:A3} was recently used in \cite{davis2022variance}.

\beforesubsec
\subsection{Linear convergence and iteration-complexity.}\label{subsec:linear_convergence}
\aftersubsec
For simplicity of presentation, we first define the following constants:
\myeq{eq:AVFR4SNE_constants}{
\hspace{-2ex}
\arraycolsep=0.2em
\begin{array}{lcl}
\kappa := \frac{L}{\sigma}, \ \  \Gamma := \rho + 2(\Theta + \hat{\Theta}),  \ \ M := 2(2\Gamma - 1) \kappa, \ \ \text{and} \ \  N := 3\Gamma\kappa + 2(1 - 2\rho).
\end{array}
\hspace{-2ex}
}
Suppose that $N^2 + 12\rho M \geq 0$ and $2\rho < 1$.
Then, we define
\myeq{eq:AVFR4SNE_par_conditions}{
\arraycolsep=0.2em
\begin{array}{lll}
\bar{\beta} := \frac{1}{\sigma}  \min\Big\{ \frac{3}{5}, \frac{3\rho}{2(1 - 2\rho)}, \frac{1}{2\kappa},  \frac{6 \rho }{N + \sqrt{N^2 + 12\rho M}} \Big\} > 0.
\end{array}
}
Theorem \ref{th:AVFR4SNE_convergence} states a linear convergence of \eqref{eq:VRKM4ME} under Assumption~\ref{as:A3}.

\begin{theorem}\label{th:AVFR4SNE_convergence}
Suppose that $G$ in \eqref{eq:ME} satisfies Condition~\eqref{eq:co-coerciveness_of_G} and  Assumption~\ref{as:A3} such that $N^2 + 12\rho M \geq 0$ and $2\rho < 1$ for given constants defined in \eqref{eq:AVFR4SNE_constants}.
Let $\bar{\beta}$ be defined by \eqref{eq:AVFR4SNE_par_conditions}.
Let $\sets{x^k}$ be generated by \eqref{eq:VRKM4ME} to solve \eqref{eq:ME} using a stochastic estimator $\widetilde{S}^k$ for $S^k$ satisfying Definition~\ref{de:ub_SG_estimator} and the following fixed parameters:
\myeq{eq:AVFR4SNE_para_update}{
\arraycolsep=0.2em
\begin{array}{lcl}
\theta_k = \theta := \frac{1}{3}, \quad \gamma_k = \gamma := \frac{1}{2} , \quad \text{and} \quad \eta_k = \eta := \beta \in (0, \bar{\beta}],
\end{array}
}
Then, with $\omega := \frac{2\beta\sigma}{3 + 4\beta\sigma} \in (0, 1)$ we have
\myeq{eq:AVFR4SNE_convergence}{
\arraycolsep=0.2em
\begin{array}{lcl}
\Expk{ \norms{x^k - x^{\star}}^2 }  & \leq & 4(1 + 2L^2\beta^2) (1 - \omega)^k \norms{x^0 - x^{\star}}^2.
\end{array}
}
Consequently,  for a given $\epsilon > 0$,  \eqref{eq:VRKM4ME} requires $k =  \BigOs{\big(2 + \frac{3}{2\beta\sigma}\big)\log(\epsilon^{-1})}$ iterations to achieve an $\epsilon$-solution $x^k$ such that $\Expk{ \norms{x^k - x^{\star}}^2 } \leq \epsilon^2$.
\end{theorem}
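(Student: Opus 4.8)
The plan is to follow the architecture of the proof of Theorem~\ref{th:AVFR4NE_convergence}, but to replace the time-varying parameters and the summability/telescoping argument by \emph{fixed} parameters and a \emph{one-step geometric contraction} of a suitable Lyapunov function, with Assumption~\ref{as:A3} supplying the decay factor. First I would establish a constant-parameter analogue of Lemma~\ref{le:AVFR4NE_key_estimate}: starting from $x^{k+1} = x^k + \theta(x^k - x^{k-1}) - \eta\widetilde{S}^k$ with $\theta = \frac{1}{3}$, $\gamma = \frac{1}{2}$, $\eta = \beta$, I would expand $\norms{x^{k+1} - x^{\star}}^2$ together with the inertial cross term $\iprods{x^{k+1} - x^k, x^k - x^{\star}}$ and the gap quantity $\iprods{Gx^k, x^k - x^{\star}} - \beta\norms{Gx^k}^2$, take $\Expsn{k}{\cdot}$, and use the unbiasedness $\Expsn{k}{\widetilde{S}^k - S^k} = 0$ and the variance bound $\Expsn{k}{\norms{\widetilde{S}^k - S^k}^2} \le \Expsn{k}{\Delta_k}$ from Definition~\ref{de:ub_SG_estimator}. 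Condition~\eqref{eq:co-coerciveness_of_G} turns the cross terms built from $Gx^k - \gamma Gx^{k-1}$ into controlled multiples of $U_k = \frac{1}{n}\sum_{i=1}^n\norms{G_ix^k - G_ix^{k-1}}^2$, and the $\frac{1}{L}$-co-coercivity of $G$ (a consequence of \eqref{eq:co-coerciveness_of_G}) gives $\iprods{Gx^k, x^k - x^{\star}} \le L\norms{x^k - x^{\star}}^2$ and $\norms{Gx^k} \le L\norms{x^k - x^{\star}}$ wherever such crude bounds are needed.

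Next I would introduce a Lyapunov function with \emph{constant} coefficients, of the shape
\[
\Lc_k := a\big[\iprods{Gx^k, x^k - x^{\star}} - \beta\norms{Gx^k}^2\big] + \norms{b(x^k - x^{\star}) + c(x^{k+1} - x^k)}^2 + d\norms{x^k - x^{\star}}^2 + e\,U_k + f\,\Delta_k,
\]
that is, the constant-parameter version of \eqref{eq:AVFR4NE_Lfunc}, and prove the contraction $\Expsn{k}{\Lc_k} \le (1-\omega)\Lc_{k-1}$ with $\omega := \frac{2\beta\sigma}{3 + 4\beta\sigma}$. For this I would combine the descent estimate of the first step with the $\Delta_k$-recursion in \eqref{eq:ub_SG_estimator}, which for fixed $\gamma$ collapses to $\Expsn{k}{\Delta_k} \le (1-\rho)\Delta_{k-1} + \Theta U_k + \hat{\Theta}U_{k-1}$, and choose $a,b,c,d,e,f$ (functions of $\beta, \sigma, L, \rho, \Theta, \hat{\Theta}$ only) so that: (i) the gap term is absorbed with the factor $\omega$ produced by $\iprods{Gx^k, x^k - x^{\star}} \ge \sigma\norms{x^k - x^{\star}}^2$ from Assumption~\ref{as:A3}; and (ii) the residual multiples of $\norms{x^k - x^{k-1}}^2$, $U_k$, $U_{k-1}$ and $\Delta_{k-1}$ left on the right-hand side are all nonnegative. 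Each of the four entries of the $\min$ in \eqref{eq:AVFR4SNE_par_conditions} is, I expect, exactly the threshold that makes one of these residual coefficients nonnegative: $\beta\sigma \le \frac{1}{2\kappa}$ (equivalently $\beta \le \frac{1}{2L}$) for the co-coercivity-based estimates, $\beta\sigma \le \frac{3}{5}$ and $\beta\sigma \le \frac{3\rho}{2(1-2\rho)}$ for the inertial and gap coefficients, and $\beta\sigma \le \frac{6\rho}{N + \sqrt{N^2 + 12\rho M}}$ for the coefficient of the combined $U_{k-1}$/$\Delta_{k-1}$ tail; this last threshold is precisely the positive root of $M(\beta\sigma)^2 + N(\beta\sigma) - 3\rho \le 0$ with $M,N$ as in \eqref{eq:AVFR4SNE_constants}, which is why the hypotheses $N^2 + 12\rho M \ge 0$ and $2\rho < 1$ are imposed.

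The rest is bookkeeping. Taking total expectation and iterating the contraction yields $\Expn{\Lc_k} \le (1-\omega)^k\Lc_0$. Since $\iprods{Gx^k, x^k - x^{\star}} - \beta\norms{Gx^k}^2 \ge 0$ for $\beta \le \frac{1}{L}$ (by \eqref{eq:co-coerciveness_of_G}) and $U_k, \Delta_k \ge 0$, we get the lower bound $\Lc_k \ge d\norms{x^k - x^{\star}}^2$. For the initial value, $x^{-1} = x^0$ gives $U_0 = 0$ and $\Delta_0 = 0$, and $x^1 - x^0 = -\eta\widetilde{S}^0 = -\beta(1-\gamma)Gx^0$, so $\norms{x^1 - x^0}^2 = \frac{\beta^2}{4}\norms{Gx^0}^2 \le \frac{L^2\beta^2}{4}\norms{x^0 - x^{\star}}^2$; together with $\iprods{Gx^0, x^0 - x^{\star}} - \beta\norms{Gx^0}^2 \le L\norms{x^0 - x^{\star}}^2$ this yields $\Lc_0 \le d'(1 + L^2\beta^2)\norms{x^0 - x^{\star}}^2$ for an explicit $d'$. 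Dividing by $d$ and tracking the constants under $\beta \le \bar{\beta}$ gives $\Expn{\norms{x^k - x^{\star}}^2} \le 4(1 + 2L^2\beta^2)(1-\omega)^k\norms{x^0 - x^{\star}}^2$, i.e.\ \eqref{eq:AVFR4SNE_convergence}. Finally, from $4(1 + 2L^2\beta^2)\Rc_0^2(1-\omega)^k \le \epsilon^2$ and $-\log(1-\omega) \ge \omega$, solving for $k$ and using $\frac{1}{\omega} = 2 + \frac{3}{2\beta\sigma}$ gives $k = \BigOs{\big(2 + \frac{3}{2\beta\sigma}\big)\log(\epsilon^{-1})}$.

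The main obstacle is the second step: choosing the six constants so that the one-step contraction holds \emph{with the sharp factor} $1-\omega$ while every residual coefficient stays nonnegative \emph{exactly} on the range $\beta \le \bar{\beta}$. The momentum term $\theta(x^k - x^{k-1})$ forces the inertial quadratic $\norms{b(x^k - x^{\star}) + c(x^{k+1} - x^k)}^2$, and a delicate cancellation between its expansions at iterations $k$ and $k-1$ has to be reconciled simultaneously with the $\sigma$-strongly-quasi-monotone term (which only carries information at $x^k$, not at $x^{k-1}$) and with the $\Delta_k$-recursion and its $U_{k-1}$, $\Delta_{k-1}$ tail; matching all three is where all four branches of the $\min$ in \eqref{eq:AVFR4SNE_par_conditions} are used, and it is the one genuinely technical piece of the argument.
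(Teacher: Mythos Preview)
Your proposal is correct and follows essentially the same route as the paper. The paper packages the one-step contraction as a separate lemma (Lemma~\ref{le:AVFR4SNE_key_estimate} in the appendix) stated for general auxiliary parameters $r,s$ with $\theta=\tfrac{s-r-1}{s}$, $\gamma=\tfrac{s-r-1}{s-1}$, $\eta=\tfrac{2\beta(s-1)}{s}$, and a Lyapunov function $\Ec_k$ of exactly the shape you wrote; the theorem then just specializes $r=1$, $s=3$ and checks that $\beta\le\bar\beta$ makes both remainder terms in the contraction nonnegative, yielding $\Expsn{k}{\Ec_k}\le(1-\omega)\Ec_{k-1}$. Your identification of the four branches of the $\min$ in \eqref{eq:AVFR4SNE_par_conditions} with the four sign conditions (in particular that the last one is the positive root of $M(\beta\sigma)^2+N(\beta\sigma)-3\rho\le 0$) matches the paper exactly, as does the endgame via $\Ec_k\ge\norms{x^k-x^{\star}}^2$, $\Ec_0\le 4(1+2L^2\beta^2)\norms{x^0-x^{\star}}^2$, and $-\log(1-\omega)\ge\omega$.
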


\proof{\textbf{Proof.}} 
We first choose $r := 1$ and $s := 3$ in Lemma~\ref{le:AVFR4SNE_key_estimate} from the appendix.
Then, the update rule \eqref{eq:AVFR4SNE_para_update_with_r} reduces to \eqref{eq:AVFR4SNE_para_update}, and  $\omega := \frac{2r \beta\sigma}{2r + 1 + 2\beta\sigma(s-1)} = \frac{2\beta\sigma}{3 + 4\beta\sigma} \in (0, 1/2)$.

Next,  $\alpha$ in Lemma~\ref{le:AVFR4SNE_key_estimate} becomes $\alpha = 8\beta\big[\frac{1}{L} - \big(1 + \frac{2(\Theta+\hat{\Theta})}{\rho-\omega}\big)\beta \big]$.
Therefore, if 
\myeqn{
\arraycolsep=0.2em
\begin{array}{ll}
\frac{1}{\kappa\beta\sigma} = \frac{1}{L\beta} \geq  \frac{\rho + 2(\Theta+\hat{\Theta}) - \omega}{\rho-\omega} = \frac{\Gamma - \omega}{\rho-\omega} = \frac{3\Gamma + 2(2\Gamma - 1)\beta\sigma}{3\rho - 2(1 - 2\rho)\beta\sigma},
\end{array}
}
then $\alpha \geq 0$, where $\kappa := \frac{L}{\sigma}$ and $\Gamma := \rho + 2(\Theta + \hat{\Theta})$ are defined in \eqref{eq:AVFR4SNE_constants}.
The above condition holds if $2(2\Gamma-1)\kappa\beta^2\sigma^2 + [3\Gamma\kappa + 2(1-2\rho)] \beta\sigma - 3\rho \leq 0$.
Using $M := 2(2\Gamma-1)\kappa$ and $N := 3\Gamma\kappa + 2(1-2\rho)$ from \eqref{eq:AVFR4SNE_constants}, this inequality becomes $M(\beta\sigma)^2 + N\beta\sigma - 3\rho \leq 0$.
Solving this inequality in $\beta\sigma$, we get $\beta\sigma \leq \frac{6\rho}{N + \sqrt{N^2 + 12\rho M}}$, provided that  $N^2 + 12\rho M \geq 0$.
Combining this condition, $\beta \leq \frac{1}{2L} = \frac{1}{2\kappa\sigma}$, and \eqref{eq:AVFR4SNE_para_condition2}, we can show that $0 < \beta \leq \bar{\beta}$ as in \eqref{eq:AVFR4SNE_para_update} for $\bar{\beta}$ defined by \eqref{eq:AVFR4SNE_par_conditions}.

Now, since $\alpha \geq 0$ and $\frac{1}{2L} - \beta \geq 0$, it follows from \eqref{eq:AVFR4SNE_key_estimate} that $\Expsn{k}{\Ec_k} \leq (1-\omega)\Ec_{k-1}$.
Taking the total expectation on both sides of this inequality and applying induction, we have $\Expn{\Ec_k} \leq (1 - \omega)^k\Expn{\Ec_0}$.
Similar to Lemma~\ref{le:lower_bounding_Ec_0}, we can show that $\Ec_0 \leq 4(1 +  2 L^2\beta^2)  \norms{x^0 - x^{\star}}^2$.
Alternatively,  from \eqref{eq:AVFR4SNE_Lfunc} and $r = 1$, we also have $\Ec_k \geq  \norms{x^k - x^{\star}}^2$.
Therefore, combining these three results, we  get \eqref{eq:AVFR4SNE_convergence}.

Finally, for a given $\epsilon > 0$, to achieve $\Expn{ \norms{x^k - x^{\star}}^2 } \leq \epsilon^2$, we impose $(1 - \omega)^k C_0 \norms{x^0 - x^{\star}}^2 \leq \epsilon^2$.
Since $-\log(1 - \omega) \geq \omega = \frac{2\beta\sigma}{3+4\beta\sigma}$, the last condition leads to  $k \geq \frac{1}{\omega}\big[ \log(\frac{1}{\epsilon}) - \log(C_0)\big]$.
Thus, we can choose $k := \BigOs{ \big( 2 + \frac{3}{2\beta\sigma}\big) \log( \frac{1}{\epsilon}) }$ to complete our proof.
\Eproof
\endproof

\begin{remark}\label{le:almost_sure_convergence}
If we skip the dependence on $n$, then the worst-case iteration-complexity stated in Theorem~\ref{th:AVFR4SNE_convergence} is $\BigOs{\kappa\log( \frac{1}{\epsilon})}$, where $\kappa := \frac{L}{\sigma}$ can be viewed as the condition number of $G$.
According to \cite{zhang2022lower}, this complexity is optimal up to a constant factor.
This result is different from $\BigOs{\sqrt{\kappa}\log( \frac{1}{\epsilon})}$ known in convex optimization.
\end{remark}

\beforesubsec
\subsection{Oracle complexity.}\label{subsec:oracle_complexity_for_case2}
\aftersubsec
Theorem~\ref{th:AVFR4SNE_convergence} only states a linear convergence rate and iteration-complexity in terms of $\mathbb{E}[ \norms{x^k - x^{\star}}^2 ]$ for \eqref{eq:VRKM4ME}, but does not specify  its oracle complexity.
We now specify Theorem~\ref{th:AVFR4SNE_convergence} for our SVRG and SAGA estimators  to obtain the following complexity bounds in expectation,  see Appendix~\ref{apdx:th:AVFR4SNE_convergence} for the proof.

\begin{corollary}\label{co:complexity_for_AVFR4SNE}
Under the same conditions and parameters as in Theorem~\ref{th:AVFR4SNE_convergence} and $n \geq 17^3$, the following statements hold:
\begin{itemize}
\item[$\mathrm{(a)}$] If we use the SVRG estimator $\widetilde{S}^k$ in \eqref{eq:SVRG_estimator} for \eqref{eq:VRKM4ME}, then by choosing $b = \BigO{ n^{2/3} }$, $\mbf{p} = \BigO{ \frac{1}{n^{1/3}} }$, and $\beta = \bar{\beta}$ in \eqref{eq:AVFR4SNE_par_conditions}, \eqref{eq:VRKM4ME} requires $\BigOs{n + \max\sets{n, n^{2/3}\kappa} \log(\frac{1}{\epsilon})}$ evaluations of $G_i$ to attain $\Expn{ \norms{x^k - x^{\star}}^2} \leq \epsilon^2$.

\item[$\mathrm{(b)}$]
 If we use the SAGA estimator $\widetilde{S}^k$ in \eqref{eq:SAGA_estimator} for \eqref{eq:VRKM4ME}, then by choosing   $b = \BigO{ n^{2/3} }$ and $\beta = \bar{\beta}$  in \eqref{eq:AVFR4SNE_par_conditions},  \eqref{eq:VRKM4ME} requires $\BigOs{n + \max\sets{n, n^{2/3}\kappa} \log(\frac{1}{\epsilon})}$ evaluations of $G_i$ to achieve $\Expn{ \norms{x^k - x^{\star}}^2} \leq \epsilon^2$.
\end{itemize}
\end{corollary}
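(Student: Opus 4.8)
The plan is to combine the iteration-complexity bound of Theorem~\ref{th:AVFR4SNE_convergence} — namely $k = \BigOs{\big(2 + \tfrac{3}{2\bar\beta\sigma}\big)\log(\epsilon^{-1})}$ iterations to reach $\Expn{\norms{x^k-x^{\star}}^2}\le\epsilon^2$ — with a count of the number of component evaluations $G_i$ used per iteration and at initialization. The initialization ($\widetilde{S}^0 = S^0 = \tfrac12 Gx^0$, and, for SAGA, the table $\widehat{G}_i^0 = G_ix^0$) costs $\BigO{n}$ evaluations, so the whole argument reduces to two tasks: (i) bound the expected per-iteration cost for each estimator, and (ii) bound $\tfrac{1}{\bar\beta\sigma}$ in terms of $n$ and $\kappa = L/\sigma$. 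For (ii), formula \eqref{eq:AVFR4SNE_par_conditions} gives $\tfrac{1}{\bar\beta\sigma} = \max\big\{\tfrac53,\ \tfrac{2(1-2\rho)}{3\rho},\ 2\kappa,\ \tfrac{N+\sqrt{N^2+12\rho M}}{6\rho}\big\}$, so everything hinges on estimating $\rho$, $\Theta$, $\hat\Theta$, $\Gamma = \rho + 2(\Theta+\hat\Theta)$, $M$, and $N$ from \eqref{eq:AVFR4SNE_constants} for each estimator.

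For the SVRG estimator, Lemma~\ref{le:SVRG_estimator} gives $\rho = \tfrac{\mbf p}{2}$, $\Theta = \tfrac{4-6\mbf p+3\mbf p^2}{b\mbf p}$, $\hat\Theta = \tfrac{2(2-3\mbf p+\mbf p^2)}{b\mbf p}$. Taking $b$ of order $n^{2/3}$ and $\mbf p$ of order $n^{-1/3}$ (so that $b\mbf p^2$ stays in $[1,32]$ as in Corollary~\ref{co:SVRG_complexity}) yields $\rho = \BigO{n^{-1/3}}$, and since $b\mbf p$ is of order $n^{1/3}$ with $\BigO{1}$ numerators, $\Theta+\hat\Theta = \BigO{n^{-1/3}}$; this gives the load-bearing estimate $\Gamma = \BigO{n^{-1/3}}$. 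With the hidden constants fixed (here $n \ge 17^3$ is used) this forces $\Gamma < \tfrac12$, so $M = 2(2\Gamma-1)\kappa < 0$ and $2\rho < 1$, while $N = 3\Gamma\kappa + 2(1-2\rho) = \BigO{1 + n^{-1/3}\kappa}$. Since $M<0$ we have $0 \le N^2 + 12\rho M \le N^2$ — nonnegativity, required by Theorem~\ref{th:AVFR4SNE_convergence}, follows by choosing the constant in $b$ small enough that the $\kappa$-linear part of $N^2$ absorbs $|12\rho M|$ — so $\tfrac{N+\sqrt{N^2+12\rho M}}{6\rho} \le \tfrac{N}{3\rho} = \BigO{\tfrac{1+n^{-1/3}\kappa}{n^{-1/3}}} = \BigO{n^{1/3}+\kappa}$. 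Together with $\tfrac{2(1-2\rho)}{3\rho} = \BigO{n^{1/3}}$ and $2\kappa$, this gives $\tfrac{1}{\bar\beta\sigma} = \BigO{\max\{\kappa, n^{1/3}\}}$.

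For the SAGA estimator, Lemma~\ref{le:SAGA_estimator} gives $\rho = \tfrac{b}{2n}$, $\Theta = \tfrac{2(n-b)(2n+b)+b^2}{nb^2}$, $\hat\Theta = \tfrac{2(n-b)(2n+b)}{nb^2}$. With $b$ of order $n^{2/3}$ (so $b \le \min\{n,16n^{2/3}\}$ as in Corollary~\ref{co:SAGA_complexity}), $\rho = \BigO{n^{-1/3}}$, and bounding $(n-b)(2n+b) \le 2n^2$ gives $\Theta+\hat\Theta \le \tfrac{8n^2+b^2}{nb^2} = \BigO{n/b^2} = \BigO{n^{-1/3}}$, so again $\Gamma = \BigO{n^{-1/3}}$; the remainder is identical to the SVRG case and yields $\tfrac{1}{\bar\beta\sigma} = \BigO{\max\{\kappa,n^{1/3}\}}$. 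Finally, the per-iteration cost: for SVRG each step needs $\BigO{b}$ evaluations for the mini-batch terms plus a full pass of cost $n$ taken with probability $\mbf p$, so the expected cost over $k$ steps is $\BigO{n + k(b+\mbf p n)} = \BigO{n + k\,n^{2/3}}$; for SAGA each step costs $\BigO{b} = \BigO{n^{2/3}}$, since the updated entries $\widehat{G}_i^k = G_ix^{k-1}$, $i\in\Bc_k$, are exactly the values forming $G_{\Bc_k}x^{k-1}$. Multiplying the $\BigO{n^{2/3}}$ per-iteration cost by $k = \BigOs{\max\{\kappa,n^{1/3}\}\log(\epsilon^{-1})}$ and adding the $\BigO{n}$ initialization gives $\BigOs{n + n^{2/3}\log(\epsilon^{-1}) + \max\{n^{2/3}\kappa,n\}\log(\epsilon^{-1})} = \BigOs{n + \max\{n, n^{2/3}\kappa\}\log(\epsilon^{-1})}$, as claimed. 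The main obstacle I anticipate is the constant bookkeeping: showing that the hidden constants in $b$ of order $n^{2/3}$ and $\mbf p$ of order $n^{-1/3}$ can be chosen so that $\Gamma < \tfrac12$, $2\rho<1$, $N^2 + 12\rho M \ge 0$, and the side constraints of Corollaries~\ref{co:SVRG_complexity}–\ref{co:SAGA_complexity} all hold simultaneously for every $n \ge 17^3$; the estimate $\Gamma = \BigO{n^{-1/3}}$ itself is elementary, but it is exactly the step that makes the whole bound work.
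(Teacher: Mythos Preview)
Your proposal is correct and follows essentially the same route as the paper: bound $\tfrac{1}{\bar\beta\sigma}$ via $\Gamma=\BigO{n^{-1/3}}$ (the paper gets the concrete bound $\Gamma\le\tfrac{33}{2n^{1/3}}$, shows $\rho M\le 0$ from $n\ge 17^3$, and then uses $\tfrac{N+\sqrt{N^2+12\rho M}}{4\rho}\le\tfrac{N}{2\rho}$), count the $\BigO{n\mbf p+b}=\BigO{n^{2/3}}$ (SVRG) or $\BigO{b}=\BigO{n^{2/3}}$ (SAGA) per-iteration cost, and multiply. Two minor remarks: the conditions $N^2+12\rho M\ge 0$ and $2\rho<1$ are already hypotheses of Theorem~\ref{th:AVFR4SNE_convergence} inherited by the corollary, so you need not re-derive them (in fact once $\rho<\Gamma<\tfrac12$ the quadratic $\kappa\mapsto N^2+12\rho M$ has negative discriminant and is automatically positive, so no tuning of $b$ is needed); and the side constraints you cite from Corollaries~\ref{co:SVRG_complexity}--\ref{co:SAGA_complexity} belong to the sublinear-rate setting and are not invoked here.
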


The oracle complexity stated in Corollary~\ref{co:complexity_for_AVFR4SNE} is better than the deterministic counterparts by a factor of $n^{1/3}$ whenever $\kappa \geq \BigOs{n^{1/3}}$, similar to the result in  \cite{davis2022variance}, but our method is different.

\beforesec
\section{Application to Finite-Sum Inclusions.}\label{sec:AVFR4NI}
\aftersec
We now apply \eqref{eq:VRKM4ME} to solve \eqref{eq:MI} using a BFS operator  \cite{attouch2018backward,davis2022variance}.
We recall it here for convenient reference:
\myeqn{
\textrm{Find $u^{\star}\in\dom{\Psi}$ such that:}~ 0 \in \Psi{u^{\star}} :=  Gu^{\star} + Tu^{\star}.
\tag{\ref{eq:MI}}
}
We require the following assumption for \eqref{eq:MI}:

\begin{assumption}\label{as:A4}
$\zer{\Psi} := \sets{u^{\star} : 0 \in \Psi{u^{\star}} } \neq \emptyset$, 
each $G_i$ is $\frac{1}{L_g}$-co-coercive for $i \in [n]$ with $L_g > 0$, 
and $T$ is maximally $\nu$-co-hypomonotone such that $L_g\nu < 1$.
\end{assumption}

We recall from \cite{bauschke2020generalized} that $T$ is $\nu$-co-hypomonotone if there exists $\nu \geq 0$ such that $\iprods{x - y, u - v} \geq -\nu\norms{x - y}^2$ for all $(u, x), (v, y) \in \gra{T}$. If $\nu = 0$, then $T$ becomes monotone.
We say that $T$ is maximally $\nu$-co-hypomonotone if $\gra{T}$ is not properly contained in the graph of any other $\nu$-co-hypomonotone operator.

Note that under Assumption~\ref{as:A4}, $\Psi$ in \eqref{eq:MI} is not necessarily monotone.
For example, we take $n=1$ and $Gu = G_1u := \mbf{G}u + \mbf{g}$ for a symmetric positive semidefinite matrix $\mbf{G}$ in $\R^{p\times p}$ and a given $\mbf{g}\in\R^p$.
Then, we can view $G = \nabla{f}$ as the gradient of a convex quadratic and $L_g$-smooth function $f(u) := \frac{1}{2}u^T\mbf{G}u + \mbf{g}^Tu$ with $L_g := \lambda_{\max}(\mbf{G})$.
It is well-known that  $G$ is $\frac{1}{L_g}$-co-coercive (see \cite{Nesterov2004}). 

Now, we take $Tu := \mbf{T}u + \mbf{r}$ for a given symmetric and invertible matrix $\mbf{T}$ in $\R^{p\times p}$ and a given $\mbf{r} \in \R^p$ such that $\lambda_{\min}(\mbf{T}^{-1}) < 0$.
Then, we have $\iprods{Tu - Tv, u - v} \geq \lambda_{\min}(\mbf{T}^{-1})\norms{Tu - Tv}^2$ for all $u, v \in \R^p$, showing that $T$ is $\nu$-co-hypomonotone with $\nu := -\lambda_{\min}(\mbf{T}^{-1}) > 0$.
However, $\Psi = G+T$ is not necessarily monotone since $\mbf{G} + \mbf{T}$ is not necessarily positive semidefinite.

\beforesubsec
\subsection{Variance-reduced fast BFS method.}\label{subsec:BFS_method}
\aftersubsec
Given $\lambda > 0$, and $G$ and $T$ in \eqref{eq:MI}, we define the following backward-forward splitting (BFS) operators:
\myeq{eq:BFS_operator}{
\arraycolsep=0.2em
\begin{array}{ll}
\Gc_{i,\lambda}x := G_i(J_{\lambda T}x) + \frac{1}{\lambda}(x - J_{\lambda T}x)~~ \textrm{for $i \in [n]$} \quad \textrm{and} \quad \Gc_{\lambda}x := \frac{1}{n}\sum_{i=1}^n\Gc_{i, \lambda}x.
\end{array}
}
Given $\Gc_{\lambda}$ in \eqref{eq:BFS_operator}, we consider the following BFS equation:
\myeq{eq:BFS_equation}{
\arraycolsep=0.2em
\begin{array}{ll}
\textrm{Find $x^{\star} \in \dom{\Gc_{\lambda}}$ such that:} \quad \Gc_{\lambda}x^{\star} = 0.
\end{array}
}
The following lemma establishes the relation between  \eqref{eq:MI}, \eqref{eq:ME}, and \eqref{eq:BFS_equation}.

\begin{lemma}\label{le:BFS_oper_properties}
Assume that Assumption~\ref{as:A4} holds for \eqref{eq:MI}.
Let $\Gc_{i,\lambda}$ and $\Gc_{\lambda}$ be defined by \eqref{eq:BFS_operator} for $i \in [n]$.
Then, the following statements hold.
\begin{itemize}
\item[$\mathrm{(a)}$] 
$\zer{G + T} = J_{\lambda T}(\zer{\Gc_{\lambda}})$, i.e. $x^{\star}$ solves \eqref{eq:BFS_equation} iff  $u^{\star} := J_{\lambda T}x^{\star}$ solves \eqref{eq:MI}.

\item[$\mathrm{(b)}$] 
If $\lambda \geq 2\nu$, then $\norms{J_{\lambda T}x - J_{\lambda T}y} \leq \norms{x - y}$ for all $x, y \in \dom{T}$, i.e. the resolvent $J_{\lambda T}$ is nonexpansive.

\item[$\mathrm{(c)}$] 
If  $\nu < \lambda < \frac{2(1 + \sqrt{1 - L_g\nu})}{L_g}$, then $\Gc_{i,\lambda}$ is $\frac{1}{L}$-co-coercive with $L := \frac{4(1- L_g\nu)}{\lambda(4 - L_g\lambda) - 4\nu} > 0$.
Consequently, $\Gc_{\lambda}$ satisfies Condition \eqref{eq:co-coerciveness_of_G} with the same constant $L$.

\end{itemize}
\end{lemma}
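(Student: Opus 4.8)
The plan is to establish the three statements of Lemma~\ref{le:BFS_oper_properties} in order, relying on standard resolvent calculus adapted to the co-hypomonotone setting. Throughout, write $u = J_{\lambda T}x$ and $v = J_{\lambda T}y$, so that $\frac{1}{\lambda}(x - u) \in Tu$ and $\frac{1}{\lambda}(y - v) \in Tv$ by definition of the resolvent; these inclusions, together with the $\nu$-co-hypomonotonicity of $T$, are the workhorses for all three parts.

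\textbf{Part (a).} The chain $x^{\star} \in \zer{\Gc_{\lambda}} \iff 0 = G_{\text{avg}}(u^{\star}) + \frac{1}{\lambda}(x^{\star} - u^{\star})$ with $u^{\star} := J_{\lambda T}x^{\star}$; since $\frac{1}{\lambda}(x^{\star} - u^{\star}) \in Tu^{\star}$, this is equivalent to $-Gu^{\star} \in Tu^{\star}$, i.e. $0 \in Gu^{\star} + Tu^{\star}$. Conversely, given $u^{\star} \in \zer{\Psi}$, set $x^{\star} := u^{\star} + \lambda w$ where $w \in Tu^{\star}$ with $w = -Gu^{\star}$; then $J_{\lambda T}x^{\star} = u^{\star}$ (using that $J_{\lambda T}$ is single-valued — which needs $\lambda > \nu$ so that $\Id + \lambda T$ is injective; I would note this) and $\Gc_{\lambda}x^{\star} = Gu^{\star} + w = 0$. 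This gives $\zer{G+T} = J_{\lambda T}(\zer{\Gc_{\lambda}})$.

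\textbf{Part (b).} From $\frac{1}{\lambda}(x-u) \in Tu$ and $\frac{1}{\lambda}(y-v) \in Tv$, co-hypomonotonicity yields $\iprods{u - v, (x-u) - (y-v)} \geq -\frac{\nu}{\lambda}\cdot\lambda\norms{u-v}^2$ — more precisely $\frac{1}{\lambda}\iprods{u-v, (x-u)-(y-v)} \geq -\nu\norms{u-v}^2$, hence $\iprods{u-v, x-y} \geq \norms{u-v}^2 - \nu\norms{u-v}^2 = (1-\tfrac{\nu}{\lambda})\cdot\ldots$; cleaning up, $\iprods{u-v,x-y} \geq (1-\nu/\lambda)\norms{u-v}^2$. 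Wait — I should be careful: the constant is $\iprods{u-v,x-y} \geq \norms{u-v}^2 + \lambda\iprods{u-v, \frac{1}{\lambda}((x-u)-(y-v))} \geq \norms{u-v}^2 - \lambda\nu\cdot\frac{1}{\lambda}\norms{u-v}^2$. I'll sort the exact constant in the writeup; the point is that when $\lambda \geq 2\nu$ this gives $\iprods{u-v,x-y} \geq \tfrac12\norms{u-v}^2$, and then Cauchy--Schwarz gives $\tfrac12\norms{u-v}^2 \leq \norms{u-v}\norms{x-y}$, i.e. $\norms{u-v} \leq 2\norms{x-y}$ — hmm, that is only $2$-Lipschitz, not nonexpansive. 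So the correct route must instead combine this with the co-coercivity-type refinement; likely one needs the full strengthened inequality $\iprods{u-v,x-y} \geq \norms{u-v}^2 - \nu\norms{\frac{1}{\lambda}((x-u)-(y-v))}^2\cdot\lambda$ handled via a $2\times2$ quadratic form argument, exactly as in part (c). I would therefore prove (b) and (c) together via a single computation: expand $\norms{x-y}^2 = \norms{u-v}^2 + 2\iprods{u-v,(x-u)-(y-v)} + \norms{(x-u)-(y-v)}^2$ and substitute the co-hypomonotone bound on the cross term, obtaining $\norms{x-y}^2 \geq (1 - 2\nu/\lambda)\norms{(x-u)-(y-v)}^2 + \ldots$; the condition $\lambda \geq 2\nu$ makes the coefficient nonnegative and delivers nonexpansiveness of $J_{\lambda T}$.

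\textbf{Part (c), the main obstacle.} Here I must show $\iprods{\Gc_{i,\lambda}x - \Gc_{i,\lambda}y, x-y} \geq \tfrac1L\norms{\Gc_{i,\lambda}x - \Gc_{i,\lambda}y}^2$ with the stated $L$. Write $d_i := \Gc_{i,\lambda}x - \Gc_{i,\lambda}y = (G_iu - G_iv) + \tfrac{1}{\lambda}((x-u)-(y-v))$, and set $a := G_iu - G_iv$, $b := \tfrac{1}{\lambda}((x-u)-(y-v))$, $e := u - v$. Then $x - y = e + \lambda b$ and $d_i = a + b$. Expanding $\iprods{d_i, x-y} = \iprods{a+b, e+\lambda b}$, I can substitute: $\iprods{a,e} \geq \tfrac{1}{L_g}\norms{a}^2$ (co-coercivity of $G_i$); $\iprods{b,e} \geq -\nu\norms{b}^2$ (co-hypomonotonicity, since $b \in$ the appropriate difference of $T$-values up to the $\lambda$ factor); $\norms{b}^2 = \norms{b}^2$; and $\iprods{a,b}$ unconstrained. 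The resulting expression is $\iprods{d_i,x-y} \geq \tfrac{1}{L_g}\norms{a}^2 + \lambda\iprods{a,b} - \lambda\nu\norms{b}^2\cdot\ldots + \norms{e + \text{stuff}}$-type terms; the task is to bound this below by $\tfrac1L\norms{a+b}^2$. This reduces to checking that a quadratic form in $(\norms{a}, \norms{b}, \iprods{a,b})$ — equivalently, positive semidefiniteness of a $2\times2$ matrix depending on $\lambda, L_g, \nu, L$ — holds for the claimed $L = \frac{4(1-L_g\nu)}{\lambda(4-L_g\lambda)-4\nu}$. The hard part is carrying out this SDP/discriminant computation cleanly and verifying that the denominator $\lambda(4-L_g\lambda) - 4\nu$ is strictly positive precisely when $\nu < \lambda < \frac{2(1+\sqrt{1-L_g\nu})}{L_g}$ (this follows by treating $\lambda(4-L_g\lambda)-4\nu > 0$ as a quadratic in $\lambda$ with roots $\frac{2(1\pm\sqrt{1-L_g\nu})}{L_g}$, noting $1 - L_g\nu > 0$ by Assumption~\ref{as:A4}). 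Finally, since each $\Gc_{i,\lambda}$ is $\tfrac1L$-co-coercive with the \emph{same} $L$, averaging over $i$ and applying the remark after \eqref{eq:co-coerciveness_of_G} (that componentwise $\tfrac1L$-co-coercivity implies Condition~\eqref{eq:co-coerciveness_of_G}) yields the last claim. I expect the bookkeeping in the $2\times2$ quadratic-form estimate, and tracking the exact constant, to be the only genuinely delicate step; everything else is routine resolvent manipulation.
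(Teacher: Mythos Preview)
Your proposal is correct and follows essentially the same route as the paper, though the paper organizes the computation more cleanly by centering everything on one fact you are implicitly rederiving: the Yosida approximation $A_{\lambda T}x := \tfrac{1}{\lambda}(x - J_{\lambda T}x)$ is $(\lambda-\nu)$-co-coercive whenever $\lambda > \nu$ (this is cited from \cite{bauschke2020generalized}). With this in hand, part~(b) is a one-line expansion of $\norms{J_{\lambda T}x - J_{\lambda T}y}^2 = \norms{(x-y) - \lambda(A_{\lambda T}x - A_{\lambda T}y)}^2$, which immediately gives $\norms{x-y}^2 - \lambda(\lambda - 2\nu)\norms{A_{\lambda T}x - A_{\lambda T}y}^2$ --- exactly what your final ``expand $\norms{x-y}^2$'' suggestion yields, so your instinct there is right and no $2\times 2$ PSD argument is needed for~(b). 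For part~(c), the paper again writes $A_{\lambda T}x = \Gc_{i,\lambda}x - G_iu$, applies $(\lambda-\nu)$-co-coercivity of $A_{\lambda T}$, uses the identity $u - \lambda G_iu = x - \lambda\Gc_{i,\lambda}x$ to rewrite the cross term, inserts $\tfrac{1}{L_g}$-co-coercivity of $G_i$, and then completes the square explicitly; your $2\times 2$ quadratic-form/PSD check in $(a,b) = (G_iu - G_iv,\ A_{\lambda T}x - A_{\lambda T}y)$ is precisely the same computation in different clothing and produces the identical constant $L$. The paper's presentation buys you a short, mechanical derivation that avoids the discriminant bookkeeping you anticipate as ``the only genuinely delicate step''; your version buys a self-contained argument that does not appeal to the external result on Yosida approximations of co-hypomonotone operators.
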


Since $\Gc_{\lambda}$ satisfies Condition~\eqref{eq:co-coerciveness_of_G} as in \eqref{eq:ME} by Lemma~\ref{le:BFS_oper_properties}(c), we can apply \eqref{eq:VRKM4ME} to solve \eqref{eq:BFS_equation}.
Hence, our proposed method for solving \eqref{eq:MI} is presented as follows.

\noindent{~~}$\bullet$~Starting from $x^0 \in \dom{\Gc_{\lambda}}$, we set $x^{-1} := x^0$ and compute $u^{-1} := J_{\lambda T}x^{-1}$.

\noindent{~~}$\bullet$~At each iteration $k \geq 0$, we perform the following steps:
\begin{itemize}
\item[(1)]  compute $u^k := J_{\lambda T}x^k$.
\item[(2)]  construct an estimator $\widetilde{S}_u^k$ of $S_u^k := Gu^k - \gamma_k Gu^{k-1}$ satisfying Definition~\ref{de:ub_SG_estimator}.
\item[(3)]  update
\myeq{eq:VRKM4NI}{
\arraycolsep=0.2em
\left\{ \begin{array}{lcl}
\widetilde{\Sc}_{\lambda}^k &:= & \widetilde{S}_u^k +  \frac{1}{\lambda}(x^k - u^k) - \frac{\gamma_k}{\lambda}(x^{k-1} -  u^{k-1}), \vspace{1ex}\\
x^{k+1} & := &  x^k + \theta_k (x^k - x^{k-1})  - \eta_k\widetilde{\Sc}_{\lambda}^k.
\end{array}\right.
}
\end{itemize}
From  \eqref{eq:BFS_equation}, it is easy to check that $\widetilde{\Sc}_{\lambda}^k$ constructed by \eqref{eq:VRKM4NI} is also an unbiased variance-reduced estimator of $\Sc_{\lambda}^k := \Gc_{\lambda}x^k - \gamma_k \Gc_{\lambda}x^{k-1}$ satisfying Definition~\ref{de:ub_SG_estimator}.

\beforesubsec
\subsection{Convergence analysis.}\label{subsec:convergence_of_VRKM4NI}
\aftersubsec
Let us define a forward-backward splitting (FBS) residual of \eqref{eq:MI} as $\Fc_{\lambda}u := \frac{1}{\lambda}(u - J_{\lambda T}(u - \lambda Gu))$.
Then, it is well-known \cite[Proposition 26]{Bauschke2011} that  $\Fc_{\lambda}u^{\star} = 0$ iff $0 \in Gu^{\star} + Tu^{\star}$.
Therefore, if $\Expn{\norms{\Fc_{\lambda}u^k}^2} \leq \epsilon^2$, then we say that $u^k$ is an $\epsilon$-solution of \eqref{eq:MI}.
Now, we can prove the following convergence result for \eqref{eq:VRKM4NI}.  

\begin{theorem}\label{th:AVFR4NI_convergence}
Suppose that Assumption~\ref{as:A4} holds for \eqref{eq:MI}.
Let $\sets{(x^k, u^k)}$ be generated by  \eqref{eq:VRKM4NI} and $u^k := J_{\lambda T}x^k$ to solve \eqref{eq:MI} using the same parameters as in Theorem~\ref{th:AVFR4NE_convergence} but with $L := \frac{4(1- L_g\nu)}{\lambda(4 - L_g\lambda) - 4\nu}$ for any $\lambda > 0$ such that $2\nu \leq \lambda < \frac{2(1 + \sqrt{1 - L_g\nu})}{L_g}$.

Then, the conclusions of Theorem~\ref{th:AVFR4NE_convergence} and Theorem~\ref{th:AVFR4NE_as_convergence}  still hold for $\norms{x^{k+1} - x^k}^2$, $\norms{u^{k+1} - u^k}^2$, $\norms{\Gc_{\lambda}{x}^k}^2$, and $\norms{\Fc_{\lambda}{u}^k}^2$, where $\Fc_{\lambda}u := \frac{1}{\lambda}(u - J_{\lambda T}(u - \lambda Gu))$ is the FBS residual of \eqref{eq:MI}.
\end{theorem}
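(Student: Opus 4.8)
The plan is to reduce Theorem~\ref{th:AVFR4NI_convergence} to a direct application of Theorems~\ref{th:AVFR4NE_convergence} and~\ref{th:AVFR4NE_as_convergence} applied to the BFS equation~\eqref{eq:BFS_equation}, and then to translate the resulting guarantees from the $\Gc_{\lambda}$-world back to the $\Psi$-world via the resolvent $J_{\lambda T}$. First I would invoke Lemma~\ref{le:BFS_oper_properties}(c): since $2\nu \le \lambda < \frac{2(1+\sqrt{1-L_g\nu})}{L_g}$ and, in particular, $\nu < \lambda$ (as $\nu \le \lambda/2 < \lambda$), the operator $\Gc_{\lambda}$ satisfies Condition~\eqref{eq:co-coerciveness_of_G} with the stated constant $L = \frac{4(1-L_g\nu)}{\lambda(4-L_g\lambda)-4\nu}$, and $\zer{\Gc_{\lambda}} \neq \emptyset$ because $\zer{\Psi}\neq\emptyset$ and Lemma~\ref{le:BFS_oper_properties}(a). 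Next I would observe that the iteration~\eqref{eq:VRKM4NI} for $\{x^k\}$ is exactly~\eqref{eq:VRKM4ME} applied to $\Gc_{\lambda}$: indeed $\Gc_{\lambda}x^k = Gu^k + \frac{1}{\lambda}(x^k - u^k)$ by~\eqref{eq:BFS_operator} with $u^k = J_{\lambda T}x^k$, so $\Sc_{\lambda}^k = \Gc_{\lambda}x^k - \gamma_k \Gc_{\lambda}x^{k-1}$ matches the bracketed expression defining $\widetilde{\Sc}_{\lambda}^k$ once $\widetilde{S}_u^k$ replaces $S_u^k := Gu^k - \gamma_k Gu^{k-1}$; since $\widetilde{S}_u^k$ is an unbiased variance-reduced estimator of $S_u^k$ in the sense of Definition~\ref{de:ub_SG_estimator} (the additive resolvent term is deterministic given $\Fc_k$, hence contributes nothing to either the bias or the variance, and $U_k$ for $\Gc_{\lambda}$ is controlled by $U_k$ for $G$ up to the nonexpansiveness of $J_{\lambda T}$), the estimator $\widetilde{\Sc}_{\lambda}^k$ inherits Definition~\ref{de:ub_SG_estimator} with the same $\rho$, $\Theta$, $\hat\Theta$. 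This is the content of the sentence already asserted right before the theorem statement.

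Having placed~\eqref{eq:VRKM4NI} squarely inside the hypotheses of Theorems~\ref{th:AVFR4NE_convergence} and~\ref{th:AVFR4NE_as_convergence}, I would simply quote their conclusions: the $\BigOs{1/k^2}$ and $\SmallOs{1/k^2}$ rates (in expectation and almost surely) for $\Expk{\norms{x^{k+1}-x^k}^2}$ and $\Expk{\norms{\Gc_{\lambda}x^k}^2}$, the summability bounds, and the almost sure convergence of $\{x^k\}$ to some $x^\star \in \zer{\Gc_{\lambda}}$. The remaining work — and the only genuinely new piece — is the dictionary between the $x$-quantities and the $u$-quantities. For $\norms{u^{k+1}-u^k}^2 = \norms{J_{\lambda T}x^{k+1} - J_{\lambda T}x^k}^2 \le \norms{x^{k+1}-x^k}^2$, I would apply Lemma~\ref{le:BFS_oper_properties}(b) (valid since $\lambda \ge 2\nu$), which immediately transfers every bound and limit for $\norms{x^{k+1}-x^k}^2$ to $\norms{u^{k+1}-u^k}^2$. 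For the forward-backward residual, the key identity is $\Fc_{\lambda}u^k = \Gc_{\lambda}x^k$ when $u^k = J_{\lambda T}x^k$: one checks $J_{\lambda T}(u^k - \lambda G u^k) = J_{\lambda T}(J_{\lambda T}x^k - \lambda G u^k)$ and, using $x^k = u^k + \lambda(\text{something in }Tu^k)$ from the definition of the resolvent together with $\Gc_{\lambda}x^k = Gu^k + \frac1\lambda(x^k - u^k)$, that $u^k - \lambda Gu^k = (u^k + \lambda Tu^k \ni) x^k - \lambda \Gc_{\lambda}x^k$ up to the resolvent step, giving $\Fc_{\lambda}u^k = \frac1\lambda(u^k - J_{\lambda T}(x^k - \lambda\Gc_{\lambda}x^k))$; a short computation with the definition $\Gc_{\lambda}x = Gu + \frac1\lambda(x-u)$ then collapses this to $\Fc_{\lambda}u^k = \Gc_{\lambda}x^k$. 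With this identity, all rates, limits, and the almost sure statements for $\norms{\Gc_{\lambda}x^k}^2$ carry over verbatim to $\norms{\Fc_{\lambda}u^k}^2$.

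For the almost sure convergence of $\{u^k\}$, I would note that $\{x^k\}\to x^\star$ almost surely by Theorem~\ref{th:AVFR4NE_as_convergence} applied to $\Gc_{\lambda}$, and that $J_{\lambda T}$ is (single-valued and) nonexpansive by Lemma~\ref{le:BFS_oper_properties}(b), hence continuous, so $u^k = J_{\lambda T}x^k \to J_{\lambda T}x^\star =: u^\star$ almost surely; and $u^\star \in \zer{\Psi}$ by Lemma~\ref{le:BFS_oper_properties}(a) since $x^\star \in \zer{\Gc_{\lambda}}$. The main obstacle — though it is more bookkeeping than depth — is verifying cleanly that $\widetilde{\Sc}_{\lambda}^k$ genuinely satisfies \emph{all three} clauses of Definition~\ref{de:ub_SG_estimator} for $\Gc_{\lambda}$ with the right $\Delta_k$ and the right $U_k$; the subtlety is that the recursive clause for $\Gc_{\lambda}$ involves $U_k^{\Gc_\lambda} = \frac1n\sum_i\norms{\Gc_{i,\lambda}x^k - \Gc_{i,\lambda}x^{k-1}}^2$, which one must bound in terms of $\frac1n\sum_i \norms{G_iu^k - G_iu^{k-1}}^2$ and $\norms{x^k - x^{k-1}}^2$ using $\Gc_{i,\lambda}x = G_i(J_{\lambda T}x) + \frac1\lambda(x - J_{\lambda T}x)$ and nonexpansiveness of $J_{\lambda T}$ and of $\Id - J_{\lambda T}$, so that the constants $\Theta,\hat\Theta$ picked up only change by absolute factors and the whole machinery of Section~\ref{sec:AVFR4NE} still runs. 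Once that verification is in hand, the theorem follows with no further analysis.
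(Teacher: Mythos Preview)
Your overall strategy coincides with the paper's: reduce to \eqref{eq:ME} via the BFS operator $\Gc_{\lambda}$, invoke Theorems~\ref{th:AVFR4NE_convergence} and~\ref{th:AVFR4NE_as_convergence}, and then push the conclusions through $J_{\lambda T}$. The treatment of $\norms{u^{k+1}-u^k}^2$ and of the almost sure limit of $\{u^k\}$ is correct. However, there is one genuine error.

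Your claimed identity $\Fc_{\lambda}u^k = \Gc_{\lambda}x^k$ is \emph{false}. Take $G\equiv 0$ and $T$ the normal cone at the origin, so $J_{\lambda T}\equiv 0$; then $u^k = 0$ and $\Fc_{\lambda}u^k = 0$, while $\Gc_{\lambda}x^k = x^k/\lambda \neq 0$ in general. What \emph{does} hold is the identity $u^k - \lambda Gu^k = x^k - \lambda\Gc_{\lambda}x^k$ (immediate from $\Gc_{\lambda}x^k = Gu^k + \tfrac{1}{\lambda}(x^k-u^k)$), whence
\[
\lambda\Fc_{\lambda}u^k \;=\; u^k - J_{\lambda T}(u^k - \lambda Gu^k) \;=\; J_{\lambda T}x^k - J_{\lambda T}\bigl(x^k - \lambda\Gc_{\lambda}x^k\bigr),
\]
and the nonexpansiveness of $J_{\lambda T}$ from Lemma~\ref{le:BFS_oper_properties}(b) yields only the \emph{inequality} $\norms{\Fc_{\lambda}u^k} \le \norms{\Gc_{\lambda}x^k}$. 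This is precisely the argument the paper uses, and the inequality is all that is needed to transfer every rate and limit statement from $\norms{\Gc_{\lambda}x^k}^2$ to $\norms{\Fc_{\lambda}u^k}^2$. Your ``short computation then collapses this to $\Fc_{\lambda}u^k = \Gc_{\lambda}x^k$'' cannot be carried out; replace it with the one-line nonexpansiveness bound.

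On what you call ``the main obstacle'' --- verifying the third clause of Definition~\ref{de:ub_SG_estimator} for $\widetilde{\Sc}_{\lambda}^k$ with the correct $U_k^{\Gc_\lambda}$ --- you are over-engineering. For the SVRG and SAGA constructions, the additive term $\tfrac{1}{\lambda}(x - J_{\lambda T}x)$ is independent of $i$, so $\widetilde{\Sc}_{\lambda}^k$ is \emph{literally} the SVRG/SAGA estimator built from the summands $\Gc_{i,\lambda}$ at the $x$-iterates (snapshots in $x$ map under $J_{\lambda T}$ to the $u$-snapshots). Lemmas~\ref{le:SVRG_estimator} and~\ref{le:SAGA_estimator} then apply verbatim with $U_k = \tfrac{1}{n}\sum_i\norms{\Gc_{i,\lambda}x^k - \Gc_{i,\lambda}x^{k-1}}^2$, and no comparison between $U_k^{\Gc_\lambda}$ and $U_k^{G}$ is required. (Incidentally, the direction you propose --- bounding $U_k^{\Gc_\lambda}$ above by $U_k^G$ plus $\norms{x^k-x^{k-1}}^2$ --- is the wrong one: to absorb the variance into the co-coercivity gain one would need the reverse inequality, which fails in general.)
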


\proof{\textbf{Proof.}}
First, by Lemma~\ref{le:BFS_oper_properties}(a), we have $\zer{G+T} = J_{\lambda T}(\zer{\Gc_{\lambda}})$.
Hence, solving \eqref{eq:MI} is equivalent to solving $\Gc_{\lambda}u^{\star} = 0$ in \eqref{eq:BFS_equation}.

\noindent 
Second, by Lemma~\ref{le:BFS_oper_properties}(c), $\Gc_{\lambda}$ satisfies Condition~\eqref{eq:co-coerciveness_of_G} with $L := \frac{4(1- L_g\nu)}{\lambda(4 - L_g\lambda) - 4\nu} > 0$.

\noindent 
Third, if we apply \eqref{eq:VRKM4ME} to solve \eqref{eq:BFS_equation}, then it can be expressed as \eqref{eq:VRKM4NI}.

\noindent 
Fourth, the conclusions of Theorem~\ref{th:AVFR4NE_convergence} and Theorem~\ref{th:AVFR4NE_as_convergence}  still hold for $\norms{x^{k+1} - x^k}^2$ and $\norms{\Gc_{\lambda}{x}^k}^2$ both in the expectation and almost sure context.

\noindent 
Fifth, since $\norms{u^{k+1} - u^k}^2 = \norms{J_{\lambda T}x^{k+1} - J_{\lambda T}x^k }^2 \leq  \norms{x^{k+1} - x^k}^2$ by Lemma~\ref{le:BFS_oper_properties}(b), the conclusions for $\norms{x^{k+1} - x^k}^2$ imply the same conclusions for $\norms{u^{k+1} - u^k}^2$.

Finally, from \eqref{eq:BFS_operator}, we have $\lambda \Gc_{\lambda}x^k = \lambda Gu^k + x^k - u^k$, or equivalently, $x^k - \lambda \Gc_{\lambda}x^k = u^k - \lambda Gu^k$.
This leads to $J_{\lambda T}x^k - J_{\lambda T}(x^k - \lambda \Gc_{\lambda}x^k) = u^k - J_{\lambda T}(u^k - \lambda Gu^k)$.
Hence, $\norms{\Fc_{\lambda}{u}^k} = \norms{\lambda^{-1}(u^k - J_{\lambda T}(u^k - \lambda Gu^k))} = \lambda^{-1} \norms{J_{\lambda T}x^k - J_{\lambda T}(x^k - \lambda \Gc_{\lambda}x^k)} \leq  \norms{\Gc_{\lambda}x^k}$.
We conclude that the conclusions of Theorem~\ref{th:AVFR4NE_convergence} and Theorem~\ref{th:AVFR4NE_as_convergence}  still hold for $\norms{\Fc_{\lambda}{u}^k}^2$.
\Eproof
\endproof


\vspace{1ex}
\beforesec
\section{Numerical Experiments.}\label{sec:num_experiments}
\aftersec
We verify our algorithms with two numerical examples and compare them with existing methods.
All algorithms are implemented in Python and run on a MacBookPro. 2.8GHz Quad-Core Intel Core I7, 16Gb Memory.

\beforesubsec
\subsection{Minimax optimization model.}\label{subsec:math_model}
\aftersubsec
We consider the following general convex-concave minimax optimization problem involving quadratic objective functions:
\myeq{eq:minimax_exam2}{
\begin{array}{lcl}
{\displaystyle\min_{z \in \R^{p_1}}\max_{\xi \in\R^{p_2}}} \Big\{ \Lc(z, \xi) := f(z) + \frac{1}{n}\sum_{i=1}^n\Hc_i(z, \xi) - g(\xi) \Big\},
\end{array}
}
where $\Hc_i(z, \xi) := z^TA_iz +  z^TL_i\xi - \xi^TB_i\xi  + b_i^{\top}z - c_i^{\top}\xi$,  $A_i \in \R^{p_1\times p_1}$ and $B_i \in \R^{p_2\times p_2}$ are symmetric positive semidefinite matrices, $L_i \in \R^{p_1\times p_2}$, $b_i \in \R^{p_1}$, $c_i \in \R^{p_2}$, and $f = \delta_{\Delta_{p_1}}$ and $g=\delta_{\Delta_{p_2}}$ are the indicator of standard simplexes in $\R^{p_1}$ and $\R^{p_2}$, respectively.
Let us first define $x := [z, \xi] \in \R^p$, where $p := p_1+p_2$.
Next, we define
\myeqn{
G_ix = \mathbf{G}_ix + \mbf{g}_i := \begin{bmatrix}A_i & L_i \\ -L_i^T & B_i\end{bmatrix}\begin{bmatrix}z \\ \xi \end{bmatrix} + \begin{bmatrix}b_i \\ c_i\end{bmatrix} = \begin{bmatrix}A_iz + L_i\xi + b_i \\ -L_i^Tz + B_i\xi + c_i\end{bmatrix} \quad\text{and} \quad 
T := \begin{bmatrix}\partial{f} \\ \partial{g}\end{bmatrix}.
}
Then, $G_i$ is an affine mapping from $\R^p$ to $\R^p$, but $\mathbf{G}_i$ is nonsymmetric.
Let $Gx := \frac{1}{n}\sum_{i=1}^nG_ix = \mathbf{G}x + \mathbf{g}$, where $\mathbf{G} := \frac{1}{n}\sum_{i=1}^n\mathbf{G}_i$ and $\mathbf{g} := \frac{1}{n}\sum_{i=1}^n\mathbf{g}_i$.
Then, the optimality condition of \eqref{eq:minimax_exam2} becomes $0 \in Gx + Tx$ as in \eqref{eq:MI}.

\beforesubsec
\subsection{Numerical experiments.}\label{subsec:implementation_detail}
\aftersubsec
We now describe our experiment setup and results in detail.

\vspace{0.5ex}
\textit{Data generation and experiment setup.}
We generate $A_i = Q_iD_iQ_i^T$ for a given orthonormal matrix $Q_i$ and a diagonal matrix $D_i$, where its elements $D_i^j$ are generated from standard normal distribution and clipped as $\max\sets{D_i^j, 0}$.
The matrix $B_i$ is also generated by the same way, while $L_i$, $b_i$, and $c_i$ are generated from standard normal distribution.
We generate the data such that the finite-sum $G$ satisfies Condition~\eqref{eq:co-coerciveness_of_G}.

We perform two sets of experiments: \textit{Experiment 1}  consists of 10 problem instances with $p=100$ ($p_1=67$ and $p_2 = 33$) and $n = 5000$, and \textit{Experiment 2} is 10 problem instances with $p=200$ ($p_1=133$ and $p_2 = 67$) and $n=10000$.
We then report the \textit{mean} of 10 problems in terms of $\frac{\norms{Gx^k}}{\norms{Gx^0}}$ for \eqref{eq:ME} and in terms of $\frac{\norms{\Gc_{\lambda}x^k}}{\norms{\Gc_{\lambda}x^0}}$ for \eqref{eq:MI} due to Theorem~\ref{th:AVFR4NI_convergence}, where $x^0 := \texttt{ones}(p)$, the vector of all ones, is the initial point and $\lambda := \frac{1}{L}$ in \eqref{eq:BFS_operator}.

\vspace{0.5ex}
\textit{Competitors.}
We implement two variants of \eqref{eq:VRKM4ME} to solve \eqref{eq:minimax_exam2}:  VFKM-Svrg -- a loopless-SVRG variant using \eqref{eq:SVRG_estimator} and VFKM-Saga -- a SAGA variant using~\eqref{eq:SAGA_estimator}.
We also compare our methods with two deterministic optimistic gradient methods: OG -- the non-accelerated OG in \cite{daskalakis2018training} and  AOG -- the accelerated OG in \cite{tran2022connection}, RF-Saga -- the variance-reduced root-finding method in \cite{davis2022variance},  VrFRBS -- the variance-reduced FRBS  in \cite{alacaoglu2022beyond}, and VrEG -- the  variance-reduced extragradient method in \cite{alacaoglu2021stochastic}.
These methods have been recently developed.

\vspace{0.5ex}
\textit{Parameter selection.}
For OG and AOG, we set their stepsize $\eta := \frac{1}{2L}$, where $L$ is the co-coercivity constant of $G$.  
For our VFKM-Svrg, we choose $\beta := \frac{0.15}{L} \approx \frac{1}{6L}$, and for our VFKM-Saga, we set  $\beta := \frac{1}{4L}$.
We also choose a mini-batch size of $b := \lfloor 0.5 n^{2/3} \rfloor$, and a probability $\mathbf{p} := \frac{1}{n^{1/3}}$ for VFKM-Svrg as suggested by our theory.
We select $r = 20$ so that $r > 2$.
For VrFRBS, we choose its stepsize $\tau := \frac{5\times 0.99(1 - \sqrt{1 - \mathbf{p}})}{2L}$ (5 times of its theoretical stepsize), and for RF-Saga, we set its stepsize $\lambda := \frac{1}{4L}$, larger than $\frac{1}{8L}$ in \cite[Example 2.3]{davis2022variance}, and for VrEG, we let $\tau := \frac{0.99\sqrt{1 - \alpha}}{L}$ for $\alpha := 1 - \mathbf{p}$ as suggested by their theory.
We also use $\mathbf{p} :=  \frac{1}{n^{1/3}}$ in both algorithms, which is the same as ours, though their theory suggests smaller values of $\mathbf{p}$.
We also select the same mini-batch size $b := \lfloor 0.5 n^{2/3} \rfloor$ in these algorithms. 
Note that if $n = 5000$, then $b = 150$ and $\mathbf{p} = 0.062$, but if $n=10000$, then $b = 239$ and $\mathbf{p} = 0.0479$. 

\vspace{0.5ex}
\textit{Results for the unconstrained case.}
In this test, we set $f = g = 0$ (i.e. without constraints) so that our model $0 \in Gx + Tx$ reduces to $Gx = 0$ as \eqref{eq:ME}.
We test $7$ algorithms on two sets of experiments: \textit{Experiment 1} and \textit{Experiment 2} designed above.
Figure \ref{fig:minimax_eq_ex1and2} reports the results of these experiments after $100$ epochs. 
Here, the number of epochs means the number of passes over all components $G_i$. 

\begin{figure}[ht!]
\vspace{-2ex}
\centering
\includegraphics[width=1\textwidth]{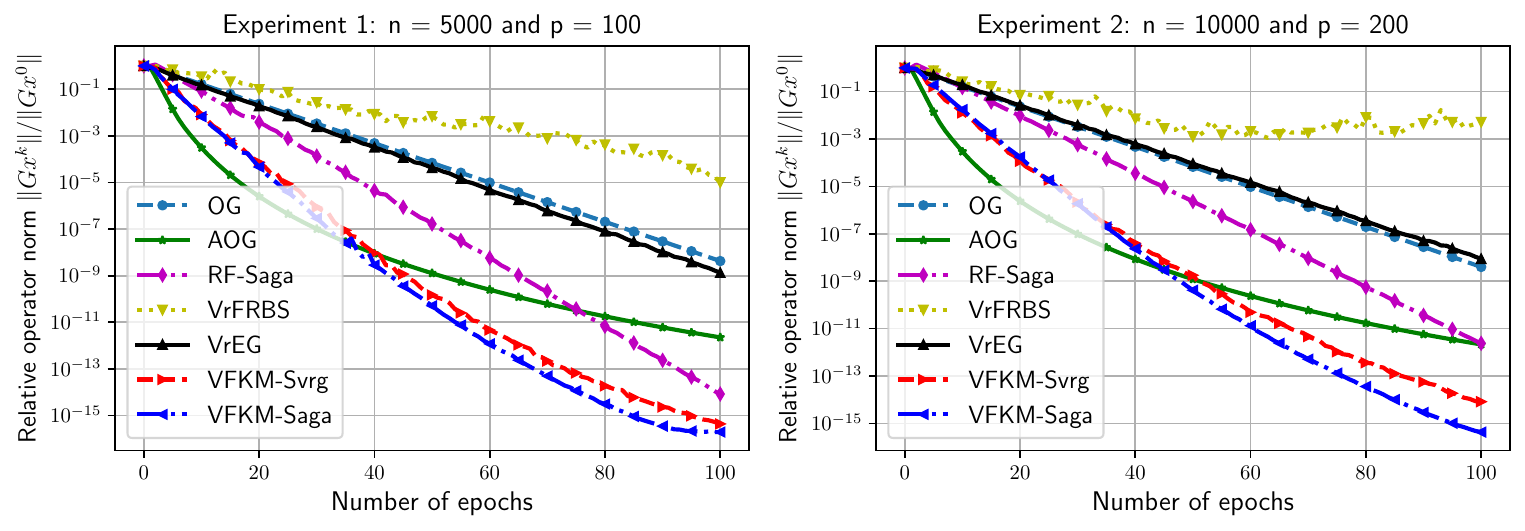}
\vspace{-3ex}
\caption{
Performance of $7$ algorithms for \eqref{eq:minimax_exam2} on $2$ experiments  $($the mean of $10$ instances$)$.
}
\label{fig:minimax_eq_ex1and2}
\vspace{-2ex}
\end{figure}

Figure~\ref{fig:minimax_eq_ex1and2} shows that our VFKM-Svrg and VFKM-Saga outperform their competitors up to the level of $10^{-15}$ accuracy.
Since SAGA variants do not require full batch evaluations of $G$ at reference points as in SVRG schemes, they often need a fewer number of epochs than SVRG.
However, as a compensation, SAGA has to store all $G_ix^k$ for $i \in [n]$.
This is also the case in our methods, where VFKM-Saga has a better performance than VFKM-Svrg in terms of epochs, but requires a larger memory to store its estimator.

RF-Saga works quite well in this case, which essentially achieves a linear convergence rate as we can observe.
AOG performs well in early epochs but is still worse than our methods and RF-Saga at the end.
VrEG has a similar performance with OG, while VrFRBS seems to have the worst performance in this test.

Note that, though we did not tune the parameters for our test,  in \textit{Experiment 2}, we can see that $\eta \approx 0.4032$ for VrEG, which is larger than $\beta \approx 0.2783$ in our methods.
For VrFRBS, we have $\eta \approx 0.1113$, which is 5 times larger than its recommended stepsize.
If we increase it to $\eta \approx 0.4$, then VrFRBS occasionally diverges. 

\vspace{0.5ex}
\textit{Results for the constrained case.}
Finally, we test two variants of our method \eqref{eq:VRKM4NI} to solve \eqref{eq:MI} and compare them with other competitors as in the first test.
We test them with two sets of experiments as before and choose $f = \delta_{\Delta_{p_1}}$ and $g=\delta_{\Delta_{p_2}}$ defined above.
The results are reported in Figure~\ref{fig:minimax_ineq_ex1and2} after $100$ epochs.

\begin{figure}[ht!]
\vspace{-1.25ex}
\centering
\includegraphics[width=1\textwidth]{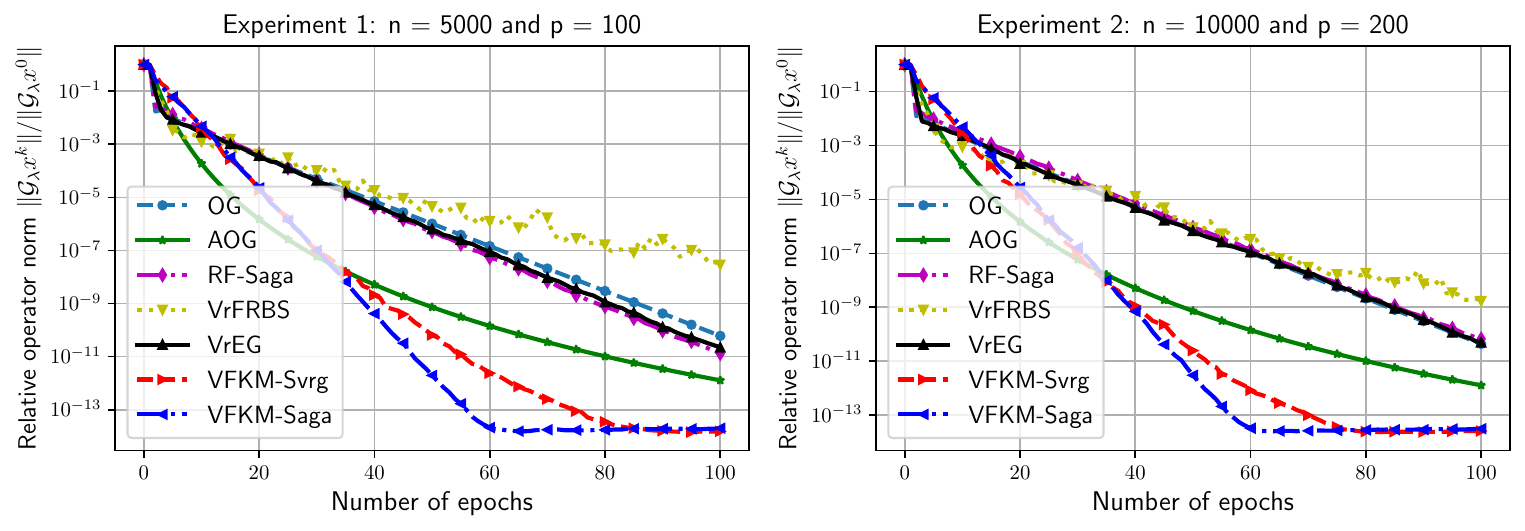}
\vspace{-3ex}
\caption{
Performance of $7$ algorithms for \eqref{eq:minimax_exam2} on $2$ experiments  $($the mean of $10$ instances$)$.
}
\label{fig:minimax_ineq_ex1and2}
\vspace{-0.5ex}
\end{figure}

From Figure~\ref{fig:minimax_ineq_ex1and2}, we still see that both VFKM-Svrg and VFKM-Saga perform well and have a better performance than their competitors.
Our VFKM-Saga still works best in this test.
However, RF-Saga  now has a similar performance as both VrEG and OG.
AOG still works well and behaves similarly as in the first test.
VrFRBS makes a better progress than the one in the first test.
Note that the stepsize we choose for RF-Saga is $\eta = \frac{1}{4L}$, the same as VFKM-Saga and larger than $\frac{0.15}{L}$ in VFKM-Svrg.

\beforesec
\section*{Acknowledgements}
This work was partly supported by the National Science Foundation (NSF): NSF-RTG grant No. NSF DMS-2134107 and the Office of Naval Research (ONR), grant No. N00014-23-1-2588.

\begin{APPENDICES}
\vspace{1ex}
\beforesec
\section{Preliminary Results.}\label{apdx:sec:tech_lemmas}
\aftersec
We recall some necessary technical results used in this paper without proof.
Then, we also provide the proof of Lemma~\ref{le:BFS_oper_properties}.

\beforesubsec
\subsection{Technical lemmas.}\label{apdx:subsec:tech_lemmas}
\aftersubsec
We recall some known results used in this paper.

\begin{lemma}[\cite{Bauschke2011}, Lemma 5.31]\label{le:A1_descent}
Let $\sets{\xi_k}$, $\sets{\zeta_k}$, and $\sets{\varepsilon_k}$ be nonnegative sequences such that $\sum_{k=0}^{\infty} \varepsilon_k < +\infty$.
In addition, for all $k\geq 0$, we assume that
\myeqn{
\xi_{k+1} \leq \xi_k - \zeta_k + \varepsilon_k.
}
Then, we conclude that $\lim_{k\to\infty}\xi_k$ exists and $\sum_{k=0}^{\infty}\zeta_k < +\infty$.
\end{lemma}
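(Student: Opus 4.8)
\textbf{Proof proposal for Lemma~\ref{le:A1_descent} (the Robbins--Siegmund-type deterministic descent lemma).}

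The plan is to exploit the telescoping structure of the hypothesis $\xi_{k+1} \leq \xi_k - \zeta_k + \varepsilon_k$. First I would fix $K \geq 0$ and sum the inequality from $k=0$ to $k=K$; after cancellation this gives $\xi_{K+1} + \sum_{k=0}^{K}\zeta_k \leq \xi_0 + \sum_{k=0}^{K}\varepsilon_k \leq \xi_0 + \sum_{k=0}^{\infty}\varepsilon_k =: M < +\infty$, using $\varepsilon_k \geq 0$ and the summability assumption. Since $\xi_{K+1} \geq 0$, this already yields $\sum_{k=0}^{K}\zeta_k \leq M$ for every $K$, and letting $K \to \infty$ establishes $\sum_{k=0}^{\infty}\zeta_k < +\infty$ (the partial sums are nondecreasing and bounded).

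Next I would establish the existence of $\lim_{k\to\infty}\xi_k$. Define the auxiliary sequence $a_k := \xi_k + \sum_{j=k}^{\infty}\varepsilon_j$, which is well-defined and nonnegative because the tail sums are finite. Using the hypothesis and $\varepsilon_k \geq 0$, one checks $a_{k+1} = \xi_{k+1} + \sum_{j=k+1}^{\infty}\varepsilon_j \leq \xi_k - \zeta_k + \varepsilon_k + \sum_{j=k+1}^{\infty}\varepsilon_j = \xi_k + \sum_{j=k}^{\infty}\varepsilon_j - \zeta_k \leq a_k$, so $\sets{a_k}$ is nonincreasing and bounded below by $0$, hence convergent. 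Since $\sum_{j=k}^{\infty}\varepsilon_j \to 0$ as $k \to \infty$ (tail of a convergent series), we conclude $\xi_k = a_k - \sum_{j=k}^{\infty}\varepsilon_j$ converges to the same limit as $\sets{a_k}$, proving $\lim_{k\to\infty}\xi_k$ exists.

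This argument is entirely elementary and I do not anticipate a genuine obstacle; the only point requiring mild care is the justification that the tails $\sum_{j=k}^{\infty}\varepsilon_j$ are finite and vanish, which is immediate from $\sum_{k=0}^{\infty}\varepsilon_k < +\infty$. Since the statement is quoted from \cite[Lemma 5.31]{Bauschke2011}, in the paper one may simply cite it; the sketch above records the short self-contained proof for completeness.
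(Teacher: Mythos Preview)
Your proof is correct and is exactly the standard argument for this classical result. The paper does not supply its own proof of this lemma; it is simply quoted from \cite[Lemma~5.31]{Bauschke2011} and used as a tool, so your self-contained sketch already goes beyond what the paper records.
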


\begin{lemma}[\cite{chambolle2015convergence}]\label{le:A2_sum}
Let $\sets{\xi_k}$ be nonnegative and $s \geq 0$ such that $\lim_{k\to\infty} k^{s + 1}\xi_k$ exists and $\sum_{k=0}^{\infty}k^{s}\xi_k < +\infty$.
Then, we conclude that $\lim_{k\to\infty}k^{s + 1}\xi_k = 0$.
\end{lemma}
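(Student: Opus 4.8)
The plan is to argue by contradiction, with the divergence of the harmonic series as the only substantive ingredient. Write $a_k := k^s\xi_k \geq 0$, so that by hypothesis $\sum_{k=0}^{\infty} a_k < +\infty$ and $k^{s+1}\xi_k = k\,a_k$. Set $L := \lim_{k\to\infty} k\,a_k$, which exists in $[0,+\infty]$ by assumption and satisfies $L \geq 0$ since $\xi_k \geq 0$. The goal is to exclude $L > 0$ (and $L = +\infty$), which then forces $L = 0$, i.e. $\lim_{k\to\infty}k^{s+1}\xi_k = 0$.

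First I would suppose, for contradiction, that $L > 0$. Then there are a constant $c > 0$ (take $c := L/2$ if $L < +\infty$ and $c := 1$ if $L = +\infty$) and an index $k_0 \geq 1$ with $k\,a_k \geq c$ for all $k \geq k_0$, hence $a_k \geq c/k$ for all $k \geq k_0$. Summing over $k \geq k_0$ and using $\sum_{k\geq k_0} 1/k = +\infty$ gives $\sum_{k=0}^{\infty} a_k \geq \sum_{k=k_0}^{\infty} c/k = +\infty$, contradicting $\sum_{k=0}^{\infty} k^s\xi_k = \sum_{k=0}^{\infty} a_k < +\infty$. Therefore $L \leq 0$, and with $L \geq 0$ we conclude $L = 0$. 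Equivalently, one may phrase the same computation as: any nonnegative summable sequence $\{a_k\}$ has $\liminf_{k\to\infty} k\,a_k = 0$ (otherwise the harmonic comparison diverges), and since the full limit $\lim_k k\,a_k$ is assumed to exist it must coincide with this $\liminf$.

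I do not expect any genuine obstacle: this is essentially a one-line comparison argument, and the only mild point is treating $L = +\infty$ on the same footing as the finite case, handled above by the choice of $c$. It is worth noting that for $s > 0$ both hypotheses are really used — summability of $\{k^s\xi_k\}$ alone only yields $k^s\xi_k \to 0$ (equivalently $\xi_k = o(k^{-s})$), which is strictly weaker than $k^{s+1}\xi_k \to 0$; the assumed \emph{existence} of $\lim_k k^{s+1}\xi_k$ is exactly what promotes the $\liminf$-control coming from summability to a genuine limit. (For $s = 0$ the claim is immediate from $\sum_k \xi_k < +\infty$ alone.) A longer alternative would route through an Abel-summation/Kronecker-type lemma, but the direct contradiction above is the cleanest.
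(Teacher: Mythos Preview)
Your argument is correct: the harmonic-series comparison is exactly the right mechanism, and your treatment of $L=+\infty$ alongside the finite case is clean. Note that the paper does not actually prove this lemma; it is recalled in Appendix~A as a technical result ``without proof'' and attributed to \cite{chambolle2015convergence}, so there is no in-paper proof to compare against. Your route is the standard one for this fact.

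One small slip worth fixing: the parenthetical ``For $s = 0$ the claim is immediate from $\sum_k \xi_k < +\infty$ alone'' is not true as written. Summability of $\{\xi_k\}$ does \emph{not} by itself force $k\xi_k\to 0$ (e.g., $\xi_k = 1/k$ on perfect squares and $0$ elsewhere); you still need the assumed existence of $\lim_k k\xi_k$ to upgrade $\liminf_k k\xi_k = 0$ to a genuine limit, exactly as in your main argument. Since your main contradiction already covers $s=0$ uniformly, simply drop that parenthetical.
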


\begin{lemma}[Supermartingale theorem, \cite{robbins1971convergence}]\label{le:RS_lemma}
Let $\sets{X_k}$, $\sets{\alpha_k}$, $\sets{V_k}$ and $\sets{R_k}$ be  sequences of nonnegative integrable random variables on some arbitrary probability space and adapted to the filtration $\set{\Fc_k}$ with $\sum_{k=0}^{\infty}\alpha_k < +\infty$ and $\sum_{k=0}^{\infty}R_k < +\infty$ almost surely, and 
\myeqn{ 
\Expn{ X_{k+1} \mid \Fc_k} \leq (1 + \alpha_k)X_k - V_k + R_k,  \quad \forall k \geq 0 \quad\textrm{almost surely}.
}
Then, $\sets{X_k}$ almost surely converges and $\sum_{k=0}^{\infty}V_k <+\infty$ almost surely.
\end{lemma}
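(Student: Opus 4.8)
The plan is to reduce the stated inequality to a genuine supermartingale by two successive transformations and then invoke the classical almost-sure convergence theorem for nonnegative supermartingales, using a localization (stopping-time) argument to accommodate the fact that $\sum_k R_k$ is finite only almost surely rather than in expectation.

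First I would remove the multiplicative factors $(1+\alpha_k)$. Since $\sum_{k=0}^{\infty}\alpha_k < +\infty$, the deterministic products $b_k := \prod_{i=0}^{k-1}(1+\alpha_i)^{-1}$ are nonincreasing and converge to a strictly positive limit $b_{\infty} \in (0,1]$. Setting $X'_k := b_kX_k$, $V'_k := b_{k+1}V_k$, and $R'_k := b_{k+1}R_k$, I would multiply the hypothesis by $b_{k+1}$ and use $b_{k+1}(1+\alpha_k) = b_k$ to obtain $\Expn{X'_{k+1}\mid\Fc_k} \leq X'_k - V'_k + R'_k$, where all three rescaled sequences remain nonnegative and adapted, and $\sum_k R'_k \leq \sum_k R_k < +\infty$ almost surely because $b_{k+1}\leq 1$.

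Next I would build two auxiliary processes. Define $U_k := X'_k + \sum_{j=0}^{k-1}V'_j$, so that $\Expn{U_{k+1}\mid\Fc_k} \leq U_k + R'_k$, and then $P_k := U_k - \sum_{j=0}^{k-1}R'_j$, which satisfies $\Expn{P_{k+1}\mid\Fc_k} \leq P_k$; that is, $\{P_k\}$ is a supermartingale. The difficulty is that $P_k$ is bounded below only by $-\sum_{j=0}^{\infty}R'_j$, an almost surely finite but not necessarily integrable random variable, so the $L^1$-bounded supermartingale convergence theorem does not apply directly. To fix this I would localize: because each partial sum $\sum_{j=0}^{k}R'_j$ is $\Fc_k$-measurable, $\tau_m := \inf\{k : \sum_{j=0}^{k}R'_j > m\}$ is a stopping time, and $\tau_m \to \infty$ almost surely as $m\to\infty$ since the full series converges. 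The stopped process $P_{k\wedge\tau_m} + m$ is then a nonnegative supermartingale, hence converges almost surely; on the event $\{\tau_m = \infty\}$ this forces convergence of $P_k$ itself, and letting $m\to\infty$ over the probability-one union $\bigcup_m\{\tau_m = \infty\}$ shows that $\lim_k P_k$ exists and is finite almost surely.

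Finally I would unwind the definitions. Since $\sum_{j<k}R'_j$ converges almost surely, $U_k = P_k + \sum_{j<k}R'_j$ converges almost surely to a finite limit. As $\{\sum_{j<k}V'_j\}$ is nondecreasing and dominated by the convergent sequence $\{U_k\}$, it is bounded, whence $\sum_{j=0}^{\infty}V'_j < +\infty$ almost surely; consequently $X'_k = U_k - \sum_{j<k}V'_j$ also converges almost surely. Translating back through $X_k = X'_k/b_k$ with $b_k\to b_{\infty}>0$ yields almost sure convergence of $\{X_k\}$, while $\sum_k V_k \leq b_{\infty}^{-1}\sum_k V'_k < +\infty$ almost surely gives the summability of $\{V_k\}$. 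I expect the localization step to be the main obstacle, since it is precisely where the almost-sure (rather than in-expectation) summability of $\{R_k\}$ must be handled carefully via optional stopping, whereas the two algebraic reductions are routine.
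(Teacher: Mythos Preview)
Your argument is correct and is essentially the standard proof of the Robbins--Siegmund theorem: the multiplicative rescaling by $b_k=\prod_{i<k}(1+\alpha_i)^{-1}$ removes the $(1+\alpha_k)$ factor, the shift $U_k\to P_k$ produces a genuine supermartingale, and the stopping-time localization $\tau_m=\inf\{k:\sum_{j\leq k}R'_j>m\}$ correctly handles the fact that $\sum_k R_k$ is assumed finite only almost surely (so that $P_{k\wedge\tau_m}+m$ is a nonnegative supermartingale and Doob's theorem applies on each $\{\tau_m=\infty\}$). The unwinding at the end is fine.

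There is nothing to compare with, however: the paper does not prove Lemma~\ref{le:RS_lemma}. It is listed in Appendix~\ref{apdx:sec:tech_lemmas} among ``necessary technical results used in this paper without proof'' and is simply attributed to Robbins and Siegmund~\cite{robbins1971convergence}. So your proposal supplies a proof where the paper intentionally omits one; it is a faithful reconstruction of the classical argument rather than an alternative to anything in the paper.
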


The following lemma is Proposition 4.1 of \cite{davis2022variance}.
It was proven for a demiclosed mapping $G$, but we recall it here for the case $G$ is continuous in a finite-dimensional space.

\begin{lemma}[Proposition 4.1 of \cite{davis2022variance}]\label{le:A3_lemma}
Suppose that $G$ in \eqref{eq:ME} is continuous.
Let $\sets{x^k}$ be a sequence of random vectors such that for all $x^{\star} \in \zer{G}$, the sequence $\sets{\norms{x^k - x^{\star}}^2}$ almost surely converges to a $[0, \infty)$-valued random variable. 
In addition, assume that $\sets{\norms{Gx^k}}$ also almost surely converges to zero.
Then, $\sets{x^k}$ almost surely converges to a $\zer{G}$-valued random variable.
\end{lemma}

\beforesubsec
\subsection{\textbf{The proof of Lemma~\ref{le:BFS_oper_properties}.}}\label{apdx:subsec:proof_of_Lemma11}
(a)~The statement (a) was proven in \cite{attouch2018backward,davis2022variance}.

(b)~Let $A_{\lambda T}x := \frac{1}{\lambda}(x - J_{\lambda T}x)$ be the Moreau-Yosida approximation of $\lambda T$.
First, as shown in \cite{bauschke2020generalized}, $A_{\lambda T}$ is $(\lambda-\nu)$-co-coercive, provided that $\lambda > \nu$.
Next, since $J_{\lambda T}x = x - \lambda A_{\lambda T}x$ and $J_{\lambda T}y = y - \lambda A_{\lambda T}y$, by the $(\lambda-\nu)$-co-coercivity of $A_{\lambda T}$, we have
\myeqn{
\arraycolsep=0.2em
\begin{array}{lcl}
\norms{J_{\lambda T}x - J_{\lambda T}y}^2
&= & \norms{x - y}^2 - 2\lambda\iprods{A_{\lambda T}x - A_{\lambda T}y, x - y} + \lambda^2\norms{A_{\lambda T}x - A_{\lambda T}y}^2 \vspace{1ex}\\
& \leq & \norms{x - y}^2 - \lambda\big(\lambda - 2\nu)\norms{A_{\lambda T}x - A_{\lambda T}y}^2.
\end{array} 
}
Thus if $\lambda \geq 2\nu$, then $\norms{J_{\lambda T}x - J_{\lambda T}y} \leq \norms{x - y}$, implying that $J_{\lambda T}$ is nonexpansive.

(c)~We denote $u := J_{\lambda T}x$ and $v := J_{\lambda T}y$ for given $x, y \in \dom{J_{\lambda T}}$, where $\lambda > \nu$.
Note that $u$ and $v$ are well-defined due to Assumption~\ref{as:A4}, see \cite{bauschke2020generalized}.
Then, we have $A_{\lambda}x = \frac{1}{\lambda}(x - J_{\lambda T}x) = \frac{1}{\lambda}(x - u)$.

Now, fix any $i \in [n]$, from \eqref{eq:BFS_operator}, we have $A_{\lambda}x = \Gc_{i,\lambda}x - G_iu$.
Since $A_{\lambda}$ is $(\lambda-\nu)$-co-coercive and $A_{\lambda}x = \Gc_{i,\lambda}x - G_iu$, we can show that
\myeqn{
\arraycolsep=0.2em
\begin{array}{lcl}
\iprods{\Gc_{i,\lambda}x - \Gc_{i,\lambda}y - (G_iu - G_iv), x - y} & \geq  & (\lambda - \nu)\norms{\Gc_{i,\lambda}x - \Gc_{i,\lambda}y - (G_iu - G_iv)}^2.
\end{array} 
}
Using again $u - \lambda G_iu = x - \lambda \Gc_{i,\lambda}x$ from \eqref{eq:BFS_operator}, the last inequality leads to 
\myeqn{
\arraycolsep=0.2em
\begin{array}{lcl}
\iprods{\Gc_{i,\lambda}x - \Gc_{i,\lambda}y, x - y} & \geq  &  (\lambda-\nu) \norms{\Gc_{i,\lambda}x - \Gc_{i,\lambda}y}^2  +  (\lambda-\nu)\norms{G_iu - G_iv}^2 \vspace{1ex}\\
&& - {~} (\lambda - 2\nu)\iprods{\Gc_{i,\lambda}x - \Gc_{i,\lambda}y, G_iu - G_iv} \vspace{1ex}\\
&& + {~}  \iprods{G_iu - G_iv, x - y - \lambda (\Gc_{i,\lambda}x  -  \Gc_{i,\lambda}y)}\vspace{1ex}\\
& = & (\lambda - \nu) \norms{\Gc_{i,\lambda}x - \Gc_{i,\lambda}y}^2    -  (\lambda - 2\nu)\iprods{\Gc_{i,\lambda}x - \Gc_{i,\lambda}y, G_iu - G_iv} \vspace{1ex}\\
&& + {~}    \iprods{G_iu - G_iv, u - v} - \nu \norms{G_iu - G_iv}^2.
\end{array} 
}
Utilizing $L_g\iprods{G_iu - G_iv, u - v} \geq \norms{G_iu - G_iv}^2$ from the $\frac{1}{L_g}$-co-coercivity of $G_i$ in Assumption~\ref{as:A4}, we can derive that
\myeqn{
\arraycolsep=0.2em
\begin{array}{lcl}
\iprods{\Gc_{i,\lambda}x - \Gc_{i,\lambda}y, x - y} & \geq  &   (\lambda - \nu)\norms{\Gc_{i,\lambda}x - \Gc_{i,\lambda}y}^2 + \big(\frac{1}{L_g} - \nu\big)\norms{G_iu - G_iv}^2 \vspace{1ex}\\
&& - {~} (\lambda - 2\nu) \iprods{\Gc_{i,\lambda}x - \Gc_{i,\lambda}y, G_iu - G_iv}  \vspace{1ex}\\
& = & \frac{4(1 - L_g\nu )(\lambda - \nu) - (\lambda - 2\nu)^2L_g}{4(1 -  L_g\nu )} \norms{\Gc_{i,\lambda}x - \Gc_{i,\lambda}y}^2    \vspace{1ex}\\
&& + {~} \frac{(1- L_g\nu)}{L_g}\norms{G_iu - G_iv - \frac{(\lambda - 2\nu)L_g}{2(1- L_g\nu)}\big(\Gc_{i, \lambda}x - \Gc_{i, \lambda}y)}^2 \vspace{1ex}\\
& \geq &  \frac{4\lambda - 4\nu - L_g\lambda^2}{4(1 -  L_g\nu)}  \norms{\Gc_{i, \lambda}x - \Gc_{i, \lambda}y}^2.
\end{array} 
}
This proves that $\Gc_{i,\lambda}$ is $\frac{1}{L}$-co-coercive with $L :=  \frac{4(1 -  L_g\nu)}{\lambda(4 - L_g\lambda) - 4\nu}$. 
Finally, since $\Gc_{\lambda} = \frac{1}{n}\sum_{i=1}^n\Gc_{i,\lambda}$, it is obvious to see that $\Gc_{\lambda}$ satisfies  Condition~\eqref{eq:co-coerciveness_of_G}.
\Eproof

\beforesec
\section{Proof of Technical Results in Section~\ref{sec:VRS_Estimator}.}\label{apdx:sec:VRS_Estimator}
\aftersec
This appendix provides the full proof of Lemma~\ref{le:SVRG_estimator} and Lemma~\ref{le:SAGA_estimator} in Section~\ref{sec:VRS_Estimator}.
For our convenience, we denote $\bar{\Fc}_k := \sigma(x^0, x^1, \cdots, x^k)$ is the $\sigma$-algebra generated by $x^0,\cdots, x^k$ of \eqref{eq:VRKM4ME} up to the iteration $k$.
We also overload the notation $\Expsn{k}{\cdot} := \Expn{\cdot \mid \bar{\Fc}_k}$.

\beforesubsec
\subsection{The proof of Lemma~\ref{le:SVRG_estimator}.}\label{apdx:le:SVRG_estimator}
\aftersubsec
For simplicity of presentation, let $i_k$ be a Bernoulli random variable reflecting the switching rule  \eqref{eq:SG_estimator_w}.
Let $\Expsn{i}{\cdot}$ and $\Expsn{\Bc_k}{\cdot}$ be the conditional expectation over $i$ and $\Bc_k$ conditioned on $\sigma(\bar{\Fc}_k, i_k)$, respectively.

By the construction of $G_{\Bc_k}$ in \eqref{eq:SVRG_estimator}, we have 
$\Expsn{\Bc_k}{G_{\Bc_k}w^k} = Gw^k$, $\Expsn{\Bc_k}{G_{\Bc_k}x^k} = Gx^k$, and $\Expsn{\Bc_k}{G_{\Bc_k}x^{k-1} } = Gx^{k-1}$.
Utilizing these relations, from  \eqref{eq:SVRG_estimator}, we can easily show that $\Expsn{\Bc_k}{ \widetilde{S}^k} = S^k$.
Taking the conditional expectation $\Expsn{k}{\cdot}$ on both sides of this relation, we get the first line of \eqref{eq:SVRG_vr_properties}.

Next, define $X_i^k := G_ix^k - \gamma_kG_ix^{k-1} - (1-\gamma_k)G_iw^k$ for all $i \in [n]$.
Then, we have $\Expsk{i}{X_i^k} = Gx^k - \gamma_k Gx^{k-1} -  (1-\gamma_k)Gw^k$ and $\Expsk{i}{\norms{X_i^k}^2} = \frac{1}{n}\sum_{j = 1}^n\norms{ G_jx^k - \gamma_k G_jx^{k-1} -  (1-\gamma_k)G_jw^k }^2$ for all $i \in [n]$.
Therefore, we can show that
\myeqn{
\arraycolsep=0.2em
\begin{array}{lcl}
\Expsn{\Bc_k}{ \norms{  \widetilde{S}^k - S^k }^2  } & \overset{\tiny\eqref{eq:SVRG_estimator}}{=} & 
\Expsk{\Bc_k}{\norms{ \frac{1}{b}\sum_{i\in\Bc_k}X_i^k  -  [ Gx^k + \gamma_k Gx^{k-1} -  (1-\gamma_k)Gw^k ] }^2 }   \vspace{1ex}\\ 
& = & 
\Expsk{\Bc_k}{\norms{ \frac{1}{b}\sum_{i\in\Bc_k}\big( X_i^k  -  \Expsn{i}{ X_i^k }  \big) }^2 }   \vspace{1ex}\\ 
&\overset{ \myeqc{1} }{ = } & \frac{1}{b^2} \sum_{i\in\Bc_k} \Expsn{i}{ \norms{X_i^k -  \mathbb{E}_i[ X_i^k] }^2 }    \vspace{1ex}\\ 
& \overset{\myeqc{2}}{=} & \frac{1}{b^2} \sum_{i\in\Bc_k}   \Expsn{i}{ \norms{X_i^k}^2 } - \frac{1}{b^2}\sum_{i\in\Bc_k} \norms{ \Expsn{i}{X_i^k }  }^2    \vspace{1ex}\\ 
& = & \frac{1}{n b}  \sum_{j = 1}^n \norms{  G_jx^k - \gamma_kG_jx^{k-1} - (1-\gamma_k)G_jw^k }^2   \vspace{1ex}\\ 
&& - {~} \frac{1}{b} \norms{ Gx^k - \gamma_k Gx^{k-1} -  (1-\gamma_k)Gw^k}^2 \vspace{1ex}\\
& \leq & \frac{1}{n b}  \sum_{i = 1}^n \norms{  G_ix^k - \gamma_kG_ix^{k-1} - (1-\gamma_k)G_iw^k }^2.
\end{array}
}
Here, $\myeqc{1}$ holds since $X_i^k$ are i.i.d. for all $i \in \Bc_k$ and $\myeqc{2}$ holds due to $\Expsn{i}{\norms{X_i^k - \Expn{X_i^k}}^2} = \Expsn{i}{\norms{X_i^k}^2 } - \norms{ \Expsn{i}{ X_i^k } }^2$.
Taking the conditional expectation $\Expsn{k}{\cdot}$ on both sides of the last relation and using the definition \eqref{eq:SVRG_Deltak} of $\Delta_k$, we obtain the second line of \eqref{eq:SVRG_vr_properties}.

Now, from  \eqref{eq:SVRG_Deltak} and \eqref{eq:SG_estimator_w}, for any $c > 0$, by Young's inequality, we can show that
\myeqn{
\arraycolsep=0.2em
\hspace{-2ex}
\begin{array}{lcl}
\Expsn{i_k}{\Delta_k} & \overset{\eqref{eq:SVRG_Deltak}}{ =} &  \frac{1}{n b}  \sum_{i = 1}^n \Expsn{i_k}{  \norms{  G_ix^k - \gamma_kG_ix^{k-1} - (1-\gamma_k)G_iw^k }^2 }   \vspace{1ex}\\ 
& \overset{\eqref{eq:SG_estimator_w}}{ = } &  \frac{1 - \mbf{p}}{n b }  \sum_{i = 1}^n   \norms{  G_ix^k - \gamma_kG_ix^{k-1} - (1-\gamma_k)G_iw^{k-1} }^2 \vspace{1ex}\\ 
&& +  {~}  \frac{ \mbf{p}}{n b}  \sum_{i = 1}^n  \norms{  G_ix^k - \gamma_kG_ix^{k-1} - (1-\gamma_k)G_ix^{k-1} }^2    \vspace{1ex}\\ 
& \overset{\tiny\myeqc{1} }{\leq} &  \frac{(1+c)(1- \mbf{p} )}{nb} \frac{(1-\gamma_k)^2}{(1-\gamma_{k-1})^2}  \sum_{i = 1}^n   \norms{G_ix^{k-1} - \gamma_{k-1} G_ix^{k-2}   - (1-\gamma_{k-1})G_{i}w^{k-1} }^2   \vspace{1ex}\\ 
&& + {~} \frac{(1 + c)(1 - \mbf{p} )}{c n b}  \sum_{i = 1}^n \norms{   G_ix^k - \gamma_k G_ix^{k-1}  - \frac{1-\gamma_k}{1-\gamma_{k-1}} ( G_ix^{k-1} - \gamma_{k-1} G_ix^{k-2} ) }^2    \vspace{1ex}\\ 
&& + {~} \frac{ \mbf{p} }{ n b }  \sum_{i = 1}^n \norms{ G_ix^k - G_ix^{k-1} }^2    \vspace{1ex}\\ 
&\overset{\tiny\myeqc{2} }{ \leq } &  
  (1+c)(1- \mbf{p} )  \frac{(1-\gamma_k)^2}{(1-\gamma_{k-1})^2} \Delta_{k-1} +  \frac{1}{n b}\big[ \mbf{p} + \frac{2(1 + c)(1 - \mbf{p} )}{c} \big]   \sum_{i = 1}^n   \norms{ G_ix^k - G_ix^{k-1} }^2 \vspace{1ex}\\
&& + {~} \frac{2(1 + c)(1 - \mbf{p} ) \gamma_{k-1}^2(1-\gamma_k)^2 }{n b c (1-\gamma_{k-1})^2 }  \sum_{i = 1}^n  \norms{   G_ix^{k-1} - G_ix^{k-2} }^2.
\end{array}
\hspace{-2ex}
}
Here, we have used Young's inequality in both $\myeqc{1}$ and $\myeqc{2}$.
If we choose $c := \frac{ \mbf{p} }{2(1 - \mbf{p} )}$, then 
$(1+c)(1 - \mbf{p} ) = 1 - \frac{\mbf{p}}{2}$, 
$\frac{(1+c)(1 - \mbf{p})}{c}  = \frac{2 - 3 \mbf{p} + \mbf{p}^2}{ \mbf{p} }$, and $\frac{2(1+c)(1 - \mbf{p} )}{c} + \mbf{p}  = \frac{4 - 6 \mbf{p} + 3 \mbf{p}^2}{ \mbf{p} }$.
Hence, we obtain from the last inequality that
\myeqn{
\arraycolsep=0.2em
\begin{array}{lcl}
\Expsn{i_k}{\Delta_k}  & \leq &  \big(1 -  \frac{\mbf{p}}{2} \big) \frac{(1-\gamma_k)^2}{(1-\gamma_{k-1})^2} \Delta_{k-1} 
+ \frac{4 - 6\mbf{p} + 3\mbf{p}^2 }{ nb  \mbf{p} } \sum_{i=1}^n   \norms{G_ix^k - G_ix^{k-1}}^2  \vspace{1ex}\\
&& + {~}  \frac{2(2 - 3\mbf{p} + \mbf{p}^2) \gamma_{k-1}^2(1 - \gamma_k)^2}{ nb \mbf{p} (1-\gamma_{k-1})^2 } \sum_{i=1}^n \norms{G_ix^{k-1} - G_ix^{k-2} }^2.
\end{array}
}
Taking the conditional expectation $\Expsn{k}{\cdot}$ on both sides, and dividing both sides of the result inequality by $(1-\gamma_k)^2$, we obtain the last inequality of  \eqref{eq:SVRG_vr_properties}.

Finally, if we denote $\Theta :=  \frac{4 - 6\mbf{p} + 3\mbf{p}^2 }{b  \mbf{p} }$ and $\hat{\Theta} :=  \frac{2(2 - 3\mbf{p} + \mbf{p}^2)}{b \mbf{p}}$, then since $\gamma_{k-1} \in (0, 1]$, we can easily show that $\widetilde{S}^k$ generated by \eqref{eq:SVRG_estimator} satisfies Definition~\ref{de:ub_SG_estimator} with $\rho := \frac{\mbf{p}}{2}$ and $\Fc_k := \sigma(\bar{\Fc}_k, i_k)$.
\Eproof

\beforesubsec
\subsection{The proof of Lemma~\ref{le:SAGA_estimator}.}\label{apdx:le:SAGA_estimator}
\aftersubsec
For simplicity of presentation, let $\Expsn{i}{\cdot}$ and $\Expsn{\Bc_k}{\cdot}$ be the expectation over $i$ and $\Bc_k$, respectively conditioned on $\bar{\Fc}_k$.

First, we have $\Expsn{\Bc_k}{G_{\Bc_k}x^k} = Gx^k$, $\Expsn{\Bc_k}{G_{\Bc_k}x^{k-1}} = Gx^{k-1}$, and $\Expsn{\Bc_k}{\widehat{G}_{\Bc_k}^k} = \frac{1}{n}\sum_{i=1}^n\widehat{G}_i^k$.
Using these relations and \eqref{eq:SAGA_estimator}, we can easily show that $\Expsn{\Bc_k}{\widetilde{S}^k} = S^k$.
Taking $\Expsn{k}{\cdot}$ on both sides of this relation, we prove the first line of \eqref{eq:SAGA_vr_properties}.

Next, we define $X_i^k := G_ix^k - \gamma_kG_ix^{k-1} - (1-\gamma_k)\widehat{G}_i^k$ for all $i \in [n]$.
Then, we have $\Expsn{i}{X_i^k} = Gx^k - \gamma_k Gx^{k-1} -  \frac{1-\gamma_k}{n}\sum_{j=1}^n\widehat{G}^k_j$ and $\Expsk{i}{\norms{X_i^k}^2 } = \frac{1}{n}\sum_{j =1}^n \norms{ G_jx^k - \gamma_k G_jx^{k-1} -  (1-\gamma_k) \widehat{G}^k_j}^2$ for all $i \in [n]$.
Therefore, we can derive
\myeqn{
\arraycolsep=0.2em
\begin{array}{lcl}
\Expsn{\Bc_k}{ \norms{  \widetilde{S}^k - S^k }^2 }  & = & \Expsk{\Bc_k}{\norms{ \frac{1}{b}\sum_{i\in\Bc_k}X_i^k  -  ( Gx^k - \gamma_k Gx^{k-1} )  +  \frac{1-\gamma_k}{n}\sum_{j=1}^n \widehat{G}^k_j  }^2 }   \vspace{1ex}\\ 
& = & \Expsk{\Bc_k}{\norms{ \frac{1}{b}\sum_{i\in\Bc_k} ( X_i^k  -  \mbb{E}_i[ X_i^k ] ) }^2 }   \vspace{1ex}\\ 
& \overset{\myeqc{1}}{ = } & \frac{1}{b^2} \sum_{i\in\Bc_k} \Expsk{i}{ \norms{X_i^k -  \mbb{E}_i[ X_i^k ] }^2 }   \vspace{1ex}\\ 
& \overset{\myeqc{2}}{ = } & \frac{1}{b^2} \sum_{i\in\Bc_k} \Expsn{i}{  \norms{  X_i^k }^2 } - \frac{1}{b^2}\sum_{i \in \Bc_k} \norms{ \Expsn{i}{X_i^k} }^2   \vspace{1ex}\\ 
& = & \frac{1}{n b}  \sum_{j  = 1}^n \norms{  G_jx^k - \gamma_kG_jx^{k-1} - (1-\gamma_k) \widehat{G}_j^k  }^2   \vspace{1ex}\\ 
&& - {~} \frac{1}{b} \norms{ Gx^k - \gamma_k Gx^{k-1} -  \frac{1-\gamma_k}{n}\sum_{j=1}^n\widehat{G}^k_j }^2 \vspace{1ex}\\
& \leq &  \frac{1}{n b}  \sum_{i  = 1}^n \norms{  G_ix^k - \gamma_kG_ix^{k-1} - (1-\gamma_k) \widehat{G}_i^k  }^2.
\end{array}
}
This implies the second line of \eqref{eq:SAGA_vr_properties} by taking the expectation $\Expsn{k}{\cdot}$ on both sides.

Now, from \eqref{eq:SAGA_Deltak} and \eqref{eq:SAGA_ref_points}, for any $c > 0$, we can show that
\myeqn{
\arraycolsep=0.2em
\begin{array}{lcl}
\Expsn{\Bc_k}{\Delta_k} & \overset{\eqref{eq:SAGA_Deltak}}{ = } &  \frac{1}{n b}  \sum_{i = 1}^n \Expsn{\Bc_k}{  \norms{  G_ix^k - \gamma_kG_ix^{k-1} - (1-\gamma_k) \widehat{G}_i^k }^2 }   \vspace{1ex}\\ 
& \overset{ \eqref{eq:SAGA_ref_points} }{ = } & \big(1 - \frac{b}{n}\big)  \frac{1}{n b}  \sum_{i = 1}^n  \norms{  G_ix^k - \gamma_kG_ix^{k-1} - (1-\gamma_k) \widehat{G}_i^{k-1} }^2    \vspace{1ex}\\ 
&& + {~}  \frac{b}{n}   \frac{1}{n b}  \sum_{i = 1}^n  \norms{  G_ix^k - \gamma_kG_ix^{k-1} - (1-\gamma_k)G_ix^{k-1} }^2    \vspace{1ex}\\ 
&\overset{\tiny\myeqc{1}}{ \leq } &   \frac{(1+c)(1-\gamma_k)^2}{ n b(1-\gamma_{k-1})^2}\big(1 - \frac{b}{n}\big)  \sum_{i = 1}^n  \norms{  G_ix^{k-1} - \gamma_{k-1}G_ix^{k-2} - (1-\gamma_{k-1}) \widehat{G}_i^{k-1} }^2   \vspace{1ex}\\ 
&& + {~} \frac{1+c}{c n b}\big(1 - \frac{b}{n}\big)  \sum_{i = 1}^n \norms{  G_ix^k - \gamma_kG_ix^{k-1} - \frac{1-\gamma_k}{1-\gamma_{k-1}}( G_ix^{k-1} - \gamma_{k-1} G_ix^{k-2}) }^2    \vspace{1ex}\\ 
&& + {~}  \frac{1}{n^2}   \sum_{i = 1}^n   \norms{  G_ix^k -  G_ix^{k-1} }^2    \vspace{1ex}\\ 
&\overset{\tiny\myeqc{2} }{ \leq } & \frac{(1+c)(1-\gamma_k)^2}{(1-\gamma_{k-1})^2 }\big(1 - \frac{b}{n}\big) \Delta_{k-1} +   \big[ \frac{1}{n^2} + \big(1 - \frac{b}{n}\big)\frac{2(1+c)}{c n b} \big]  \sum_{i = 1}^n \norms{  G_ix^k - G_ix^{k-1} }^2    \vspace{1ex}\\ 
&& +  {~} \frac{2(1+c)\gamma_{k-1}^2(1-\gamma_k)^2}{c n b(1-\gamma_{k-1})^2}\big(1 - \frac{b}{n}\big)  \sum_{i = 1}^n   \norms{  G_ix^{k-1} - G_ix^{k-2}  }^2, 
\end{array}
}
where  $\myeqc{1}$ and $\myeqc{2}$ are due to Young's inequality, and $\Expsn{\Bc_k}{\cdot}$ is for the update rule \eqref{eq:SAGA_ref_points}.

If we choose $c := \frac{b}{2n}$, then $(1 - \frac{b}{n})(1+c) = 1 - \frac{b}{2n} - \frac{b^2}{2n^2} \leq 1 - \frac{b}{2n}$.
Hence,  we can upper bound the last inequality as follows:
\myeqn{
\arraycolsep=0.2em
\begin{array}{lcl}
\Expsn{\Bc_k}{\Delta_k} & \leq &  \big(1 -  \frac{b}{2n} \big) \frac{(1-\gamma_k)^2}{(1-\gamma_{k-1})^2} \Delta_{k-1}  +   \frac{2(n-b)(b +2n) + b^2}{ n^2b^2 } \sum_{i=1}^n  \norms{G_ix^k - G_ix^{k-1}}^2    \vspace{1ex}\\ 
&& + {~}  \frac{2(n-b)(b+2n) \gamma_{k-1}^2(1 - \gamma_k)^2}{ n^2b^2 (1-\gamma_{k-1})^2 } \sum_{i=1}^n \norms{G_ix^{k-1} - G_ix^{k-2} }^2.
\end{array}
}
Taking $\Expsn{k}{\cdot}$ on both sides of this inequality, dividing the result by $(1-\gamma_k)^2$, and noting that $\gamma_{k-1}^2 \leq 1$, we get the last inequality of  \eqref{eq:SAGA_vr_properties}, where 
$\Theta :=  \frac{2(n-b)(b +2n) + b^2}{ n b^2 }$ and $\hat{\Theta} := \frac{2(n-b)(b+2n)}{ n b^2}$.

Finally, using  \eqref{eq:SAGA_vr_properties},  one can easily check that $\widetilde{S}^k$ generated by \eqref{eq:SAGA_estimator} satisfies Definition~\ref{de:ub_SG_estimator} with $\rho := \frac{b}{2n} \in (0, 1)$ and $\Fc_k := \bar{\Fc}_k$.
\Eproof

\beforesec
\section{Proof of Theoretical Results for \eqref{eq:VRKM4ME} in Section~\ref{sec:AVFR4NE}.}\label{apdx:sec:AVFR4NE}
\aftersec
We prove Lemma~\ref{le:AVFR4NE_key_estimate} and provide technical lemmas for proving Theorems~\ref{th:AVFR4NE_convergence} and \ref{th:AVFR4NE_as_convergence}.

\beforesubsec
\subsection{The proof of Lemma~\ref{le:AVFR4NE_key_estimate}.}\label{apdx:le:AVFR4NE_key_estimate}
\aftersubsec
Fix $x^{\star} \in \zer{G}$, we first introduce the following functions:
\myeq{eq:AVFR4NE_lm2_proof1}{
\arraycolsep=0.2em
\begin{array}{lcl}
\Qc_k &:= &  \norms{ r( x^k - x^{\star} ) + t_{k+1}(x^{k+1} - x^k) }^2 + \mu r \norms{x^k - x^{\star}}^2,   \vspace{1ex}\\ 
\Lc_k & := & 4r\beta (t_k - \mu)  \big[ \iprods{Gx^k, x^k - x^{\star}} - \beta\norms{Gx^k}^2 \big] + \Qc_k.
\end{array}
}
Then, we can split the proof of Lemma~\ref{le:AVFR4NE_key_estimate} into two steps in Lemma~\ref{le:bound_Qk} and Lemma~\ref{le:bound_Lk}.

\begin{lemma}\label{le:bound_Qk}
Let us choose parameters $t_k$, $\theta_k$, $\gamma_k$, and $\beta$ such that
\myeq{eq:AVFR4NE_lm2_para_conds}{
\arraycolsep=0.2em
\begin{array}{ll}
t_k - r -  t_{k+1}  \theta_k - \mu = 0,  \ \  (1-\gamma_k) t_{k+1}\theta_k - r \gamma_k  = 0, \ \ \text{and} \ \ 2\beta(t_k - \mu) = \eta_kt_{k+1}.
\end{array}
\hspace{-3ex}
}
Then,  for $\Qc_k$ defined by \eqref{eq:AVFR4NE_lm2_proof1} and $e^k := \widetilde{S}^k - S^k$, we have
\myeq{eq:AVROG_descent2}{
\arraycolsep=0.2em
\begin{array}{lcl}
\Qc_{k-1} - \Expsk{k}{\Qc_k } 
& = &  \big[ (t_k - r)^2 - t_{k+1}^2\theta_k^2 + \mu r \big]\norms{x^k - x^{k-1}}^2 - \eta_k^2t_{k+1}^2 \Expsk{k}{ \norms{e^k}^2 }    \vspace{1ex}\\ 
&& + {~} 2 r \eta_k t_{k+1}  \big[  \iprods{Gx^k, x^k - x^{\star}} - \beta\norms{Gx^k}^2 \big]   \vspace{1ex}\\ 
&& - {~} 2 r \gamma_k \eta_k t_{k+1}  \big[ \iprods{Gx^{k-1}, x^{k-1} - x^{\star}}  - \beta\norms{Gx^{k-1}}^2 \big]   \vspace{1ex}\\ 
&& +  {~} 2\eta_k t_{k+1}^2\theta_k \iprods{Gx^k - Gx^{k-1}, x^k - x^{k-1}}   -  \gamma_k \eta_k^2t_{k+1}^2 \norms{Gx^k - Gx^{k-1}}^2.
\end{array}
}
\end{lemma}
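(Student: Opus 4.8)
The plan is to carry out a direct one-step expansion of $\Qc_k$ using the update rule \eqref{eq:VRKM4ME}, to take the conditional expectation $\Expsk{k}{\cdot}$ so that the linear contribution of the martingale error $e^k := \widetilde{S}^k - S^k$ disappears, and then to match coefficients term by term against the claimed right-hand side of \eqref{eq:AVROG_descent2}, invoking the three parameter relations in \eqref{eq:AVFR4NE_lm2_para_conds} exactly where they are needed. No inequality enters; the assertion is an identity, so the whole difficulty is bookkeeping.

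First I would set $a^k := r(x^k - x^{\star}) + t_{k+1}(x^{k+1} - x^k)$, so that $\Qc_k = \norms{a^k}^2 + \mu r\norms{x^k - x^{\star}}^2$. Substituting $x^{k+1} - x^k = \theta_k(x^k - x^{k-1}) - \eta_k(S^k + e^k)$, writing $r(x^k - x^{\star}) = r(x^{k-1} - x^{\star}) + r(x^k - x^{k-1})$, and using the first relation of \eqref{eq:AVFR4NE_lm2_para_conds} in the form $r + t_{k+1}\theta_k = t_k - \mu$, I obtain the compact identity
\[
a^k = a^{k-1} - \mu(x^k - x^{k-1}) - \eta_k t_{k+1} S^k - \eta_k t_{k+1} e^k ,
\]
where $a^{k-1} = r(x^{k-1} - x^{\star}) + t_k(x^k - x^{k-1})$. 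Since $a^{k-1}$, $x^k - x^{k-1}$ and $S^k$ are $\Fc_k$-measurable and $\Expsk{k}{e^k} = 0$ by Definition~\ref{de:ub_SG_estimator}, squaring and taking $\Expsk{k}{\cdot}$ kills the cross term with $e^k$ and gives
\[
\Expsk{k}{\norms{a^k}^2} = \norms{a^{k-1} - \mu(x^k - x^{k-1}) - \eta_k t_{k+1} S^k}^2 + \eta_k^2 t_{k+1}^2 \Expsk{k}{\norms{e^k}^2} .
\]

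Next I would expand the remaining square, add $\mu r\big(\norms{x^{k-1} - x^{\star}}^2 - \norms{x^k - x^{\star}}^2\big)$, substitute $a^{k-1} = r(x^{k-1} - x^{\star}) + t_k(x^k - x^{k-1})$ and $S^k = Gx^k - \gamma_k Gx^{k-1}$, and rewrite every inner product of the form $\iprods{x^{k-1} - x^{\star}, G\,\cdot\,}$ using $x^{k-1} - x^{\star} = (x^k - x^{\star}) - (x^k - x^{k-1})$. Then I collect coefficients: the $\iprods{x^k - x^{k-1}, x^{k-1} - x^{\star}}$ terms cancel; the coefficient of $\norms{x^k - x^{k-1}}^2$ becomes $2\mu t_k - \mu^2 - \mu r$, which equals $(t_k - r)^2 - t_{k+1}^2\theta_k^2 + \mu r$ after using $t_{k+1}\theta_k = t_k - r - \mu$; the coefficients of $\iprods{Gx^k, x^k - x^{\star}}$ and $\iprods{Gx^{k-1}, x^{k-1} - x^{\star}}$ come out as $2r\eta_k t_{k+1}$ and $-2r\gamma_k\eta_k t_{k+1}$; the coefficient of $\iprods{Gx^k, x^k - x^{k-1}}$ equals $2\eta_k t_{k+1}(t_k - r - \mu) = 2\eta_k t_{k+1}^2\theta_k$; and the quadratic-in-$G$ part from $-\eta_k^2 t_{k+1}^2\norms{S^k}^2$ is reorganized into $-\gamma_k\eta_k^2 t_{k+1}^2\norms{Gx^k - Gx^{k-1}}^2$ plus the remaining $\norms{Gx^k}^2$ and $\norms{Gx^{k-1}}^2$ pieces.

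The step I expect to demand the most care is checking that the two leftover mismatches close up precisely under \eqref{eq:AVFR4NE_lm2_para_conds}. The coefficient of $\iprods{Gx^{k-1}, x^k - x^{k-1}}$ equals $-2\gamma_k\eta_k t_{k+1}(t_k - \mu)$, which matches the required $-2\eta_k t_{k+1}^2\theta_k$ only after the identity $\gamma_k(t_k - \mu) = t_{k+1}\theta_k$, itself a consequence of the first two relations of \eqref{eq:AVFR4NE_lm2_para_conds}. The coefficients of $\norms{Gx^k}^2$ and $\norms{Gx^{k-1}}^2$ match the targets $-2r\beta\eta_k t_{k+1} - \gamma_k\eta_k^2 t_{k+1}^2$ and $2r\gamma_k\beta\eta_k t_{k+1} - \gamma_k\eta_k^2 t_{k+1}^2$ only after the identity $\eta_k t_{k+1}(1 - \gamma_k) = 2r\beta$, which follows by combining the third relation $\eta_k t_{k+1} = 2\beta(t_k - \mu)$ with $1 - \gamma_k = r/(t_k - \mu)$ (the latter again extracted from the first two relations). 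The $\iprods{Gx^k, Gx^{k-1}}$ coefficient matches directly. Once these identities are in place, every term of the expansion coincides with a term of \eqref{eq:AVROG_descent2}, and only the final collection of terms remains; the sheer number of inner-product terms to track is the only real obstacle.
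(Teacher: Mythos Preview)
Your proposal is correct and follows essentially the same direct-expansion approach as the paper. The only organizational difference is that you invoke the first relation $r + t_{k+1}\theta_k = t_k - \mu$ upfront to obtain the compact identity $a^k = a^{k-1} - \mu(x^k - x^{k-1}) - \eta_k t_{k+1}\widetilde{S}^k$ and then expand around $x^{k-1} - x^{\star}$, whereas the paper expands $\norms{a^{k-1}}^2$ and $\norms{a^k}^2$ separately around $x^k - x^{\star}$ and uses the first relation afterwards to kill the $\iprods{x^k - x^{k-1}, x^k - x^{\star}}$ term; the remaining bookkeeping (the decomposition of $\norms{S^k}^2$, the use of $\gamma_k(t_k-\mu)=t_{k+1}\theta_k$ and $\eta_k t_{k+1}(1-\gamma_k)=2r\beta$) is identical in both.
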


\proof{\textbf{Proof.}}
First, we can easily show that
\myeqn{
\arraycolsep=0.2em
\begin{array}{lcl}
\Tc_{[1]} &:= &  \norms{ r ( x^{k-1} - x^{\star}) + t_k(x^k - x^{k-1}) }^2 =  \norms{r (x^k - x^{\star}) + (t_k - r)(x^k - x^{k-1})}^2   \vspace{1ex}\\ 
& = & r^2 \norms{x^k - x^{\star}}^2 + (t_k - r)^2\norms{x^k - x^{k-1}}^2 + 2r (t_k -r)\iprods{x^k - x^{k-1}, x^k - x^{\star}}.
\end{array}
}
Alternatively, using \eqref{eq:VRKM4ME}, we can expand 
\myeqn{
\hspace{-0.5ex}
\arraycolsep=-0.05em
\begin{array}{lcl}
\Tc_{[2]} &:= &  \norms{ r ( x^{k} - x^{\star} ) + t_{k+1}(x^{k+1} - x^{k}) }^2   \vspace{1ex}\\ 
& \overset{\tiny\eqref{eq:VRKM4ME}}{ = } & \norms{ r( x^k - x^{\star} ) + t_{k+1}\theta_k(x^k - x^{k-1}) - \eta_k t_{k+1}\widetilde{S}^k}^2   \vspace{1ex}\\ 
& = & r^2 \norms{x^k - x^{\star}}^2 + t_{k+1}^2\theta_k^2\norms{x^k - x^{k-1}}^2 + \eta_k^2t_{k+1}^2\norms{\widetilde{S}^k }^2 - 2 r \eta_k t_{k+1}\iprods{\widetilde{S}^k, x^k - x^{\star}}   \vspace{1ex}\\ 
&& + {~} 2 r t_{k+1}\theta_k\iprods{x^k - x^{k-1}, x^k - x^{\star}} - 2\eta_k t_{k+1}^2\theta_k\iprods{\widetilde{S}^k, x^k - x^{k-1}}.
\end{array}
\hspace{-2ex}
}
Moreover, for any $\mu > 0$, we also have 
\myeqn{
\arraycolsep=0.2em
\begin{array}{lcl}
\mu r \norms{x^{k-1} - x^{\star}}^2 - \mu r \norms{x^k - x^{\star}}^2 = \mu r \norms{x^k - x^{k-1}}^2 -  2\mu r  \iprods{x^k - x^{\star}, x^k - x^{k-1}}.
\end{array}
}
Combining three last expressions, and using $\Qc_k$ from \eqref{eq:AVFR4NE_lm2_proof1}, we get
\myeqn{
\arraycolsep=0.2em
\begin{array}{lcl}
\Qc_{k-1} - \Qc_k &:= &  \big[ (t_k - r)^2 - t_{k+1}^2\theta_k^2 + \mu r \big]\norms{x^k - x^{k-1}}^2 -  \eta_k^2t_{k+1}^2\norms{  \widetilde{S}^k }^2   \vspace{1ex}\\ 
&& + {~} 2r (t_k - r -   t_{k+1}\theta_k - \mu )\iprods{x^k - x^{k-1}, x^k - x^{\star}}     \vspace{1ex}\\ 
&& + {~} 2 r \eta_k t_{k+1}\iprods{\widetilde{S}^k, x^k - x^{\star}} +  2\eta_k t_{k+1}^2\theta_k \iprods{\widetilde{S}^k, x^k - x^{k-1}}.
\end{array}
}
Taking the conditional expectation $\Expsn{k}{\cdot}$ on both sides of this expression, we obtain
\myeq{eq:AVFR4NE_lm2_proof2}{
\hspace{-3ex}
\arraycolsep=0.2em
\begin{array}{lcl}
\Qc_{k-1} - \Expsn{k}{\Qc_k } &:= &  \big[ (t_k - r)^2 - t_{k+1}^2 \theta_k^2 + \mu r \big]\norms{x^k - x^{k-1}}^2 - \eta_k^2t_{k+1}^2 \Expsn{k}{ \norms{  \widetilde{S}^k }^2 }   \vspace{1ex}\\ 
&& + {~} 2r (t_k - r - t_{k+1}\theta_k - \mu )\iprods{x^k - x^{k-1}, x^k - x^{\star}}   \vspace{1ex}\\ 
&& + {~}  2 r \eta_k t_{k+1} \Expsk{k}{ \iprods{\widetilde{S}^k, x^k - x^{\star}} } + 2\eta_k t_{k+1}^2\theta_k \Expsk{k}{ \iprods{\widetilde{S}^k, x^k - x^{k-1}} }.
\end{array}
\hspace{-4ex}
}
Since  $S^k = Gx^k - \gamma_k Gx^{k-1}$ and $e^k = \widetilde{S}^k - S^k$, we have $\widetilde{S}^k = S^k + e^k$ and  $\Expsn{k}{e^k} = 0$ as $\widetilde{S}^k$ is an unbiased estimator of $S^k$ stated in  \eqref{eq:ub_SG_estimator}.
Therefore, we can show that
\myeqn{
\hspace{-1ex}
\arraycolsep=0.2em
\begin{array}{lcl}
\Expsk{k}{\iprods{\widetilde{S}^k, x^k - x^{\star}}}  & = &  \iprods{S^k, x^k - x^{\star}} + \iprods{\Expsn{k}{e^k}, x^k - x^{\star}}  =  \iprods{S^k, x^k - x^{\star}}   \vspace{1ex}\\ 
&= &  \iprods{Gx^k, x^k - x^{\star}} - \gamma_k \iprods{Gx^{k-1}, x^k - x^{\star}}   \vspace{1ex}\\ 
& = &  \iprods{Gx^k, x^k - x^{\star}} - \gamma_k \iprods{Gx^{k-1}, x^{k-1} - x^{\star}}  - \gamma_k \iprods{Gx^{k-1}, x^k - x^{k-1}}.
\end{array}
\hspace{-2ex}
}
Similarly, we can also derive that
\myeqn{
\arraycolsep=0.2em
\begin{array}{lcl}
\Expsk{k}{ \iprods{ \widetilde{S}^k, x^k - x^{k-1}} } & = &  \iprods{S^k, x^k - x^{k-1}} + \iprods{\Expsn{k}{e^k}, x^k - x^{k-1}}  =   \iprods{S^k, x^k - x^{k-1} }   \vspace{1ex}\\ 
&= & \iprods{Gx^k - Gx^{k-1}, x^k - x^{k-1}} + (1- \gamma_k)\iprods{Gx^{k-1}, x^k - x^{k-1}}.
\end{array}
}
Again, since $\widetilde{S}^k$ is an unbiased estimator of $S^k$ as stated in \eqref{eq:ub_SG_estimator}, and $e^k = \widetilde{S}^k - S^k$, we have $\Expsk{k}{ \norms{\widetilde{S}^k}^2 } = \Expsk{k}{ \norms{S^k + e^k}^2 } = \norms{S^k}^2 + 2 \Expsk{k}{\iprods{e^k, S^k}} + \Expsk{k}{ \norms{e^k}^2 } = \Expsk{k}{ \norms{e^k}^2 } + \norms{S^k}^2$.
Therefore, we can easily show that
\myeqn{
\arraycolsep=0.2em
\begin{array}{lcl}
\Expsk{k}{ \norms{\widetilde{S}^k}^2 } &= & \norms{S^k}^2 + \Expsk{k}{ \norms{e^k}^2 }  = \norms{Gx^k - \gamma_k Gx^{k-1}}^2 + \Expsk{k}{ \norms{e^k}^2 }   \vspace{1ex}\\ 
&= & \norms{Gx^k}^2 - 2\gamma_k\iprods{Gx^k, Gx^{k-1}} + \gamma_k^2\norms{Gx^{k-1}}^2 + \Expsk{k}{ \norms{e^k}^2 }   \vspace{1ex}\\ 
&= & (1 - \gamma_k)\norms{Gx^k}^2 - \gamma_k(1-\gamma_k) \norms{Gx^{k-1}}^2   \vspace{1ex}\\ 
&& + {~} \gamma_k\norms{Gx^k - Gx^{k-1}}^2  + \Expsk{k}{ \norms{e^k}^2 }.
\end{array}
}
Substituting the last three expressions into \eqref{eq:AVFR4NE_lm2_proof2}, we can derive that
\myeqn{
\hspace{-1ex}
\arraycolsep=0.2em
\begin{array}{lcl}
\Qc_{k-1} - \Expsk{k}{\Qc_k } 
& = &  \big[ (t_k - r)^2 - t_{k+1}^2\theta_k^2 + \mu r \big]\norms{x^k - x^{k-1}}^2 - \eta_k^2t_{k+1}^2 \Expsk{k}{ \norms{e^k}^2 }    \vspace{1ex}\\ 
&& + {~} 2 r \eta_k t_{k+1}  \big[  \iprods{Gx^k, x^k - x^{\star}} - \beta\norms{Gx^k}^2 \big]   \vspace{1ex}\\ 
&& - {~} 2 r \gamma_k \eta_k t_{k+1}  \big[ \iprods{Gx^{k-1}, x^{k-1} - x^{\star}}  - \beta\norms{Gx^{k-1}}^2 \big]   \vspace{1ex}\\ 
&& +  {~} 2\eta_k t_{k+1}^2\theta_k \iprods{Gx^k - Gx^{k-1}, x^k - x^{k-1}}   -  \gamma_k \eta_k^2t_{k+1}^2 \norms{Gx^k - Gx^{k-1}}^2 \vspace{1ex}\\ 
&& +  {~} 2r\eta_kt_{k+1}\big[ \beta -   \frac{(1- \gamma_k)\eta_kt_{k+1}}{2r} \big] \norms{Gx^k}^2 \vspace{1ex}\\
&& - {~}  2r\gamma_k \eta_kt_{k+1}\big[   \beta - \frac{(1- \gamma_k)\eta_kt_{k+1}}{2r} ] \norms{Gx^{k-1}}^2   \vspace{1ex}\\ 
&& + {~} 2r (t_k - r -   t_{k+1} \theta_k  - \mu )\iprods{x^k - x^{k-1}, x^k - x^{\star}}   \vspace{1ex}\\ 
&& + {~} 2\eta_kt_{k+1}\big[ (1-\gamma_k) t_{k+1}\theta_k - r \gamma_k \big] \iprods{Gx^{k-1}, x^k - x^{k-1}}.
\end{array}
\hspace{-2ex}
}
Under the conditions in \eqref{eq:AVFR4NE_lm2_para_conds}, we have  $t_k - r -   t_{k+1} \theta_k  - \mu = 0$, $(1-\gamma_k) t_{k+1}\theta_k - r \gamma_k = 0$, and $\frac{(1-\gamma_k)\eta_kt_{k+1}}{2r} = \frac{\eta_kt_{k+1}}{2(t_{k+1}\theta_k + r)} = \frac{\eta_kt_{k+1}}{2(t_k - \mu)} = \beta$.
Substituting these identities into the above expression, the last four terms are vanished.
Thus, we obtain \eqref{eq:AVROG_descent2}.
\Eproof
\endproof

\begin{lemma}\label{le:bound_Lk}
If $t_k$, $\theta_k$, $\gamma_k$, and $\eta_k$ are updated as in \eqref{eq:AVFR4NE_para_update}, then we have
\myeq{eq:AVFR4NE_lm2_proof4}{
\arraycolsep=0.2em
\begin{array}{lcl}
\Lc_{k-1} -   \Expsk{k}{ \Lc_k }  & \geq &  \mu (2t_k - r - \mu)  \norms{x^k - x^{k-1}}^2 - 4\beta^2 (t_k - \mu)^2 \Expsk{k}{ \norms{e^k}^2 } \vspace{1ex}\\ 
&& + {~} 4r \beta (t_{k-1} - t_k + r) \big[  \iprods{Gx^{k-1}, x^{k-1} - x^{\star}}  - \beta \norms{Gx^{k-1}}^2 \big]   \vspace{1ex}\\ 
&& + {~} 4\beta (t_k -  \mu)(t_k - r - \mu) \big(\frac{1}{L} - \beta\big) U_k,
\end{array}
}
where $\Lc_k$ is defined by \eqref{eq:AVFR4NE_lm2_proof1} and $U_k := \frac{1}{n}\sum_{i=1}^n  \norms{G_ix^k - G_ix^{k-1}}^2$.
\end{lemma}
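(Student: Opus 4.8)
The plan is to derive \eqref{eq:AVFR4NE_lm2_proof4} from the exact one-step identity \eqref{eq:AVROG_descent2} for $\Qc_k$ by plugging in the parameter choice \eqref{eq:AVFR4NE_para_update}. \emph{Step 1 (verify the hypotheses of Lemma~\ref{le:bound_Qk}).} With $t_{k+1}\theta_k = t_k - r - \mu$ one has $t_k - r - t_{k+1}\theta_k - \mu = 0$; since $1 - \gamma_k = \frac{r}{t_k - \mu}$ and $\gamma_k = \frac{t_k - r - \mu}{t_k - \mu}$, we get $(1-\gamma_k)t_{k+1}\theta_k = \frac{r(t_k - r - \mu)}{t_k - \mu} = r\gamma_k$; and $\eta_k t_{k+1} = 2\beta(t_k - \mu)$. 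Hence \eqref{eq:AVFR4NE_lm2_para_conds} holds and \eqref{eq:AVROG_descent2} is available. I will repeatedly use the three identities $\eta_k t_{k+1} = 2\beta(t_k - \mu)$, $t_{k+1}\theta_k = t_k - r - \mu$, and $\gamma_k(t_k - \mu) = t_k - r - \mu$.

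\emph{Step 2 (subtract and collect terms).} Write $P_k := \iprods{Gx^k, x^k - x^{\star}} - \beta\norms{Gx^k}^2$, so that $\Lc_k = 4r\beta(t_k - \mu)P_k + \Qc_k$ by \eqref{eq:AVFR4NE_lm2_proof1}. Since $P_{k-1}$, $\Qc_{k-1}$, and $P_k$ are $\Fc_k$-measurable (only $\Qc_k$ involves $x^{k+1}$), we have $\Lc_{k-1} - \Expsk{k}{\Lc_k} = 4r\beta(t_{k-1} - \mu)P_{k-1} - 4r\beta(t_k - \mu)P_k + \Qc_{k-1} - \Expsk{k}{\Qc_k}$. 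Substituting \eqref{eq:AVROG_descent2} and grouping: the coefficient of $P_k$ is $-4r\beta(t_k - \mu) + 2r\eta_k t_{k+1} = 0$; the coefficient of $P_{k-1}$ is $4r\beta(t_{k-1} - \mu) - 2r\gamma_k\eta_k t_{k+1} = 4r\beta[(t_{k-1} - \mu) - \gamma_k(t_k - \mu)] = 4r\beta(t_{k-1} - t_k + r)$; the coefficient of $\norms{x^k - x^{k-1}}^2$ is $(t_k - r)^2 - t_{k+1}^2\theta_k^2 + \mu r = (t_k - r)^2 - (t_k - r - \mu)^2 + \mu r = \mu(2t_k - r - \mu)$; the coefficient of $\Expsk{k}{\norms{e^k}^2}$ is $-\eta_k^2 t_{k+1}^2 = -4\beta^2(t_k - \mu)^2$; and, using $\eta_k t_{k+1}^2\theta_k = 2\beta(t_k - \mu)(t_k - r - \mu)$ together with $\gamma_k\eta_k^2 t_{k+1}^2 = 4\beta^2(t_k - r - \mu)(t_k - \mu)$, the last two terms of \eqref{eq:AVROG_descent2} combine into $4\beta(t_k - \mu)(t_k - r - \mu)\big[\iprods{Gx^k - Gx^{k-1}, x^k - x^{k-1}} - \beta\norms{Gx^k - Gx^{k-1}}^2\big]$.

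\emph{Step 3 (invoke co-coercivity).} By Condition~\eqref{eq:co-coerciveness_of_G}, $\iprods{Gx^k - Gx^{k-1}, x^k - x^{k-1}} \geq \tfrac{1}{L} U_k$ with $U_k = \tfrac{1}{n}\sum_{i=1}^n\norms{G_ix^k - G_ix^{k-1}}^2$, while convexity of $\norms{\cdot}^2$ gives $\norms{Gx^k - Gx^{k-1}}^2 = \norms{\tfrac{1}{n}\sum_{i=1}^n(G_ix^k - G_ix^{k-1})}^2 \leq U_k$; hence the bracket in Step~2 is $\geq (\tfrac{1}{L} - \beta)U_k$. Because $t_k \geq r + \mu$, the prefactor $4\beta(t_k - \mu)(t_k - r - \mu)$ is nonnegative, so multiplying preserves the inequality, and reassembling all the pieces yields \eqref{eq:AVFR4NE_lm2_proof4}. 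The computation is entirely elementary; the only delicate points are (i) checking that the cancellations rely precisely on $\eta_k t_{k+1} = 2\beta(t_k - \mu)$ and $\gamma_k(t_k - \mu) = t_{k+1}\theta_k = t_k - r - \mu$, and (ii) applying the co-coercivity bound with the correct sign — at this stage we only need the prefactor to be $\geq 0$ (which follows from $t_k \geq r + \mu$) and do \emph{not} need $\tfrac{1}{L} - \beta \geq 0$.
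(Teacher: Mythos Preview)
Your proof is correct and follows essentially the same route as the paper: verify that \eqref{eq:AVFR4NE_para_update} satisfies the parameter conditions \eqref{eq:AVFR4NE_lm2_para_conds}, substitute into \eqref{eq:AVROG_descent2} and collect coefficients to pass from $\Qc_k$ to $\Lc_k$, and finally apply Condition~\eqref{eq:co-coerciveness_of_G} together with $\norms{Gx^k - Gx^{k-1}}^2 \leq U_k$. Your explicit bookkeeping of the coefficients (especially $\eta_k t_{k+1} = 2\beta(t_k-\mu)$ and $\gamma_k(t_k-\mu) = t_{k+1}\theta_k = t_k-r-\mu$) and your observation that the sign of $\tfrac{1}{L}-\beta$ is irrelevant at this step are both accurate.
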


\proof{\textbf{Proof.}}
It is easy to check that the update rules in \eqref{eq:AVFR4NE_para_update} guarantee three conditions in \eqref{eq:AVFR4NE_lm2_para_conds}.
Utilizing $\theta_k$, $\gamma_k$, and $\eta_k$ from \eqref{eq:AVFR4NE_para_update} we can rewrite \eqref{eq:AVROG_descent2} as follows:
\myeqn{
\arraycolsep=0.2em
\begin{array}{lcl}
\Qc_{k-1} - \Expsk{k}{\Qc_k } & = &   \mu(2t_k - r - \mu) \norms{x^k - x^{k-1}}^2  - 4\beta^2 (t_k - \mu)^2 \Expsk{k}{ \norms{e^k}^2 }    \vspace{1ex}\\ 
&& +  {~} 4 r \beta (t_k-\mu) \big[  \iprods{Gx^k, x^k - x^{\star}} -   \beta \norms{Gx^k}^2 \big]   \vspace{1ex}\\ 
&& - {~} 4 r \beta (t_k - r - \mu) \big[  \iprods{Gx^{k-1}, x^{k-1} - x^{\star}}  - \beta \norms{Gx^{k-1}}^2 \big]   \vspace{1ex}\\ 
&& + {~} 4\beta (t_k-\mu)(t_k - r -  \mu )   \iprods{Gx^k  - Gx^{k-1}, x^k - x^{k-1}}   \vspace{1ex}\\ 
&& - {~} 4\beta^2 (t_k-\mu)(t_k - r -  \mu )    \norms{Gx^k - Gx^{k-1}}^2.
\end{array}
}
Rearranging the last expression, and then using $\Lc_k$ from \eqref{eq:AVFR4NE_lm2_proof1}, we can show that
\myeqn{
\arraycolsep=0.2em
\begin{array}{lcl}
\Lc_{k-1}  -  \Expsk{k}{ \Lc_k } & = &  \mu(2t_k - r - \mu) \norms{x^k - x^{k-1}}^2   - 4\beta^2 (t_k - \mu)^2  \Expsk{k}{ \norms{e^k}^2 }   \vspace{1ex}\\ 
&& + {~} 4r \beta (t_{k-1} - t_k + r) \big[  \iprods{Gx^{k-1}, x^{k-1} - x^{\star}}  - \beta \norms{Gx^{k-1}}^2 \big]   \vspace{1ex}\\ 
&& + {~} 4\beta (t_k - \mu)(t_k - r - \mu)   \iprods{Gx^k  - Gx^{k-1}, x^k - x^{k-1}}     \vspace{1ex}\\ 
&& - {~}  4\beta^2 (t_k - \mu)(t_k - r - \mu) \norms{Gx^k - Gx^{k-1}}^2.
\end{array}
}
By Condition~\eqref{eq:co-coerciveness_of_G} and Young's inequality in $\myeqc{1}$, we have
\myeq{eq:AVFR4NE_lm2_A2_cond}{
\hspace{-2ex}
\arraycolsep=0.2em
\begin{array}{llcl}
& \iprods{Gx^k  - Gx^{k-1}, x^k - x^{k-1}}   & \overset{\eqref{eq:co-coerciveness_of_G}}{ \geq } &  \frac{1}{n L}\sum_{i=1}^n\norms{G_ix^k - G_ix^{k-1}}^2 = \frac{1}{L} U_k,   \vspace{1ex}\\ 
& - \norms{Gx^k - Gx^{k-1}}^2 & \overset{\myeqc{1}}{ \geq } & - \frac{1}{n}\sum_{i=1}^n\norms{G_ix^k - G_ix^{k-1}}^2  = - U_k.
\end{array}
\hspace{-2ex}
}
Applying  \eqref{eq:AVFR4NE_lm2_A2_cond}, we can lower bound the last estimate to obtain \eqref{eq:AVFR4NE_lm2_proof4}.
\Eproof
\endproof

\vspace{1ex}
\beforesubsec
\subsection*{\hspace{-3ex}\textbf{The proof of Lemma~\ref{le:AVFR4NE_key_estimate}.}}
\aftersubsec
First, since $\gamma_k = \frac{ t_k - r -  \mu}{t_k - \mu}$, we have $(1-\gamma_k)^2 = \frac{r^2}{(t_k - \mu)^2}$.
From \eqref{eq:ub_SG_estimator}, $\rho \in (0, 1)$, and the definitions of $e^k := \widetilde{S}^k - S^k$ and $U_k$, we can show that
\myeq{eq:AVFR4NE_lm2_upper_bound}{
\hspace{-1ex}
\arraycolsep=0.2em
\begin{array}{lcl}
\Expsn{k}{ \norms{ e^k}^2 } & \leq & \Expsn{k}{ \Delta_k },   \vspace{1ex}\\ 
(t_k - \mu)^2 \Expsn{k}{ \Delta_k } &\leq & \frac{1 - \rho}{\rho }\big[ (t_{k-1}  - \mu)^2 \Delta_{k-1} - (t_k - \mu)^2 \Expsn{k}{ \Delta_k } \big]    \vspace{1ex}\\ 
&& + {~} \frac{1 }{\rho} \big[ \hat{\Theta}(t_{k-1} - \mu)^2 U_{k-1} - \hat{\Theta} (t_k - \mu)^2 U_k \big] + \frac{(\Theta + \hat{\Theta}) (t_k - \mu)^2 }{\rho} U_k.
\end{array} 
\hspace{-3ex}
}
Substituting two expressions from \eqref{eq:AVFR4NE_lm2_upper_bound} into \eqref{eq:AVFR4NE_lm2_proof4}, we obtain 
\myeqn{
\arraycolsep=0.2em
\begin{array}{lcl}
\Lc_{k-1}  - \Expsn{k}{ \Lc_k } & \geq & \mu (2t_k - r - \mu)  \norms{x^k - x^{k-1}}^2   \vspace{1ex}\\ 
&& + {~} 4r \beta (t_{k-1} - t_k + r) \big[  \iprods{Gx^{k-1}, x^{k-1} - x^{\star}}  - \beta \norms{Gx^{k-1}}^2 \big]   \vspace{1ex}\\ 
&& + {~} 4\beta (t_k - \mu)^2  \Big[ \frac{(t_k - r - \mu)}{(t_k - \mu)}  \big(\frac{1}{L} - \beta\big) - \frac{ \beta (\Theta + \hat{\Theta}) }{\rho} \Big]  U_k   \vspace{1ex}\\ 
&& - {~} \frac{4 \beta^2(1-\rho)}{ \rho } \big[ (t_{k-1}  - \mu)^2 \Delta_{k-1} - (t_k - \mu)^2 \Expsn{k}{ \Delta_k } \big]    \vspace{1ex}\\ 
&& - {~} \frac{4 \beta^2 }{\rho} \big[ \hat{\Theta}(t_{k-1} - \mu)^2 U_{k-1} - \hat{\Theta} (t_k - \mu)^2 U_k \big].
\end{array}
}
Rearranging this inequality, and using $\Ec_k$ from \eqref{eq:AVFR4NE_Lfunc}, we ultimately get \eqref{eq:AVFR4NE_key_estimate1}.
\Eproof

\beforesubsec
\subsection{Technical lemmas for proving Theorems~\ref{th:AVFR4NE_convergence} and \ref{th:AVFR4NE_as_convergence}.}\label{apdx:subsec:tech_lemmas}
\aftersubsec
We need the following two technical lemmas to prove Theorem~\ref{th:AVFR4NE_convergence} and Theorem~\ref{th:AVFR4NE_as_convergence}.

\begin{lemma}\label{le:lower_bounding_Ec_0}
Under the same conditions and parameters as in Theorem~\ref{th:AVFR4NE_convergence}, $\Ec_k$ defined by \eqref{eq:AVFR4NE_Lfunc} satisfies
\myeq{eq:Ec0_lower_bound}{
\arraycolsep=0.2em
\begin{array}{lcl}
\Ec_0 & \leq & r\big(1 + 3r + 8rL^2\beta^2 \big) \norms{x^0 - x^{\star}}^2    =  C_0  \norms{x^0 - x^{\star}}^2, 
\end{array}
}
where $C_0 := r (1 + 3r + 8rL^2\beta^2)$ as defined in \eqref{eq:AVFR4NE_constants}. 
\end{lemma}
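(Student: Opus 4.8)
The plan is to specialize the Lyapunov function $\Ec_k$ from \eqref{eq:AVFR4NE_Lfunc} to $k=0$ under the parameter choices $\mu := 1$ and $t_k := k + r + 1$ adopted at the start of the proof of Theorem~\ref{th:AVFR4NE_convergence}, and then bound the resulting expression by elementary inequalities. First I would record the base-case data. Since $x^{-1} = x^0$, we have $U_0 = \frac{1}{n}\sum_{i=1}^n\norms{G_ix^0 - G_ix^{-1}}^2 = 0$, and $\Delta_0 = 0$ by hypothesis of Theorem~\ref{th:AVFR4NE_convergence}, so the last two terms of $\Ec_0$ vanish. Evaluating \eqref{eq:AVFR4NE_para_update2} at $k=0$ gives $\theta_0 = 0$, $\gamma_0 = 0$, and $\eta_0 = \frac{2\beta r}{r+2}$, whence $\widetilde{S}^0 = S^0 = Gx^0 - \gamma_0 Gx^{-1} = Gx^0$ and $x^1 - x^0 = \theta_0(x^0 - x^{-1}) - \eta_0\widetilde{S}^0 = -\eta_0 Gx^0$. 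Using $t_1 = r+2$, so that $t_1\eta_0 = 2\beta r$, and $t_0 - \mu = r$, substitution into \eqref{eq:AVFR4NE_Lfunc} yields
\[
\Ec_0 = 4r^2\beta\big[\iprods{Gx^0, x^0 - x^{\star}} - \beta\norms{Gx^0}^2\big] + \norms{r(x^0 - x^{\star}) - 2\beta r\, Gx^0}^2 + r\norms{x^0 - x^{\star}}^2 .
\]

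Next I would bound the three pieces separately. For the first bracket, Young's inequality gives $\iprods{Gx^0, x^0 - x^{\star}} \le \beta\norms{Gx^0}^2 + \tfrac{1}{4\beta}\norms{x^0 - x^{\star}}^2$, so the first term is at most $r^2\norms{x^0 - x^{\star}}^2$. For the squared term, $\norms{a+b}^2 \le 2\norms{a}^2 + 2\norms{b}^2$ gives $\norms{r(x^0 - x^{\star}) - 2\beta r\, Gx^0}^2 \le 2r^2\norms{x^0 - x^{\star}}^2 + 8\beta^2 r^2\norms{Gx^0}^2$. Finally, Condition~\eqref{eq:co-coerciveness_of_G} makes $G$ both $\tfrac1L$-co-coercive and $L$-Lipschitz, so $\norms{Gx^0} = \norms{Gx^0 - Gx^{\star}} \le L\norms{x^0 - x^{\star}}$ since $x^{\star}\in\zer{G}$; hence $8\beta^2 r^2\norms{Gx^0}^2 \le 8r^2L^2\beta^2\norms{x^0 - x^{\star}}^2$. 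Adding the three bounds produces $\Ec_0 \le (r^2 + 2r^2 + 8r^2L^2\beta^2 + r)\norms{x^0 - x^{\star}}^2 = r(1 + 3r + 8rL^2\beta^2)\norms{x^0 - x^{\star}}^2$, which is \eqref{eq:Ec0_lower_bound} with $C_0 := r(1 + 3r + 8rL^2\beta^2)$.

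This is bookkeeping rather than a conceptual obstacle; the points needing care are (i) verifying that $\widetilde{S}^0 = S^0 = Gx^0$ and that the $U_0$ and $\Delta_0$ terms of $\Ec_0$ are exactly zero at $k=0$, (ii) tracking the identity $t_1\eta_0 = 2\beta r$ so the inertial block simplifies cleanly, and (iii) invoking the $L$-Lipschitz bound $\norms{Gx^0} \le L\norms{x^0 - x^{\star}}$ implied by Condition~\eqref{eq:co-coerciveness_of_G}. I note that exploiting the exact cancellation of the $\iprods{Gx^0, x^0-x^{\star}}$ and $\norms{Gx^0}^2$ contributions would give the sharper constant $r(r+1)$, but the looser form stated in the lemma is the one consumed by the summability estimates in Step~1 of the proof of Theorem~\ref{th:AVFR4NE_convergence} and matches the analogous base-case computation used in Section~\ref{sec:star_monotone_convergence}, so I would keep it as stated.
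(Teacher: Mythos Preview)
Your proof is correct and follows essentially the same approach as the paper's own argument: both specialize $\Ec_0$ using $x^{-1}=x^0$, $\gamma_0=0$, $\Delta_0=0$, $U_0=0$, and $t_1\eta_0=2\beta r$, then bound the inner-product term by Young's inequality and the squared term by $\norms{a+b}^2\le 2\norms{a}^2+2\norms{b}^2$ together with the $L$-Lipschitz bound $\norms{Gx^0}\le L\norms{x^0-x^{\star}}$. Your additional observation that an exact computation yields the sharper constant $r(r+1)$ is correct and is a nice remark not made explicit in the paper.
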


\proof{\textbf{Proof.}}
Since $x^{-1} = x^0$, $\eta_0 = \frac{2r\beta }{r+2}$, $\gamma_0 = 0$, $\Delta_0 = 0$, and $t_1 = r+2$, \eqref{eq:AVFR4NE_Lfunc} leads to
\myeqn{
\hspace{-1ex}
\arraycolsep=0.2em
\begin{array}{lcl}
\Ec_0 & = & 4r^2\beta  \big[ \iprods{Gx^0, x^0 \! - \! x^{\star}} \! - \! \beta\norms{ Gx^0 }^2 \big] + \norms{r(x^0 \! -  x^{\star}\!) + (r \! + \! 2)(x^1 \! - \! x^0)}^2 
+   r \norms{x^0 - x^{\star}}^2. 
\end{array}
\hspace{-2ex}
}
From \eqref{eq:VRKM4ME}, $x^{-1} = x^0$, and $\widetilde{S}^0 = (1-\gamma_0)Gx^0$, we also have $x^1 - x^0 = \theta_0(x^0 - x^{-1}) - \eta_0\widetilde{S}^0 = -\frac{2r \beta }{r+2}Gx^0$.
By Young's inequality and $\norms{Gx^0}^2 = \norms{Gx^0 - Gx^{\star}}^2 \leq L^2\norms{x^0 - x^{\star}}^2$ (by the $L$-Lipschitz continuity of $G$ derived from \eqref{eq:co-coerciveness_of_G} and $Gx^{\star} = 0$), we have
\myeqn{
\arraycolsep=0.2em
\begin{array}{lcl}
\norms{r(x^0 -  x^{\star}) + (r+2)(x^1 - x^0)}^2 &\leq & 2r^2\norms{x^0 - x^{\star}}^2 + 2(r+2)^2\norms{x^1 - x^0}^2    \vspace{1ex}\\ 
& = & 2r^2\norms{x^0 - x^{\star}}^2 + 8r^2\beta^2\norms{Gx^0}^2   \vspace{1ex}\\ 
& \leq & 2r^2\big( 1 + 4L^2\beta^2\big) \norms{x^0 - x^{\star}}^2.
\end{array}
}
By Young's inequality, we also have 
\myeqn{
\arraycolsep=0.2em
\begin{array}{lcl}
\iprods{Gx^0, x^0 - x^{\star}} - \beta\norms{ Gx^0 }^2 &\leq & \beta\norms{Gx^0}^2 + \frac{1}{4\beta}\norms{x^0 - x^{\star}}^2 - \beta\norms{ Gx^0 }^2  =  \frac{1}{4\beta}\norms{x^0 - x^{\star}}^2.
\end{array}
}
Combining the last three expressions, we can show that
\myeqn{
\arraycolsep=0.2em
\begin{array}{lcl}
\Ec_0 & \leq & r\big(1 + 3r + 8rL^2\beta^2 \big) \norms{x^0 - x^{\star}}^2    =  C_0  \norms{x^0 - x^{\star}}^2, 
\end{array}
}
where $C_0 := r (1 + 3r + 8rL^2\beta^2)$ is defined in \eqref{eq:AVFR4NE_constants}. 
This proves \eqref{eq:Ec0_lower_bound}.
\Eproof
\endproof

\begin{lemma}\label{le:AVFR4NE_key_bound_of_Wk}
Under the same conditions and parameters as in Theorem~\ref{th:AVFR4NE_convergence}, let 
\myeq{eq:AVFR4NE_Wk_def}{
\hspace{-2ex}
\arraycolsep=0.1em
\begin{array}{lcl}
W_k & := & (k \!+\!  r \! - \! 1)(k  \! + \! r \! + \! 2)\norms{ x^{k+1} {\!\!\!} - \! x^k  \! + \! \eta_kGx^k }^2 + \frac{4\beta^2(1 \! - \! \rho)(k+r)^2}{\rho}\Delta_k + \frac{4\beta^2\hat{\Theta}(k+r)^2}{\rho}U_k,
\end{array}
\hspace{-4ex}
}
where $U_k := \frac{1}{n}\sum_{i=1}^n\norms{G_ix^k - G_ix^{k-1} }^2$. 
Then,  we have
\myeq{eq:AVFR4NE_Wk_bound}{
\arraycolsep=0.2em
\begin{array}{lcl}
\Expsn{k} {W_k} & \leq & W_{k-1} - (r-2)(k+r+1) \norms{x^k - x^{k-1} + \eta_{k-1}Gx^{k-1} }^2 \vspace{1ex}\\
&& + {~}  \frac{2(2k + r +1)}{r} \norms{x^k - x^{k-1}}^2  + \frac{4\beta^2(\Theta + \hat{\Theta} ) (k + r)^2 }{\rho} U_k.
\end{array}
}
Moreover, we also have
\myeq{eq:AVFR4NE_Deltak_bound}{ 
\arraycolsep=0.2em
\begin{array}{lcl}
\sum_{k=1}^{\infty} (k + r)^2 \Expn{\Delta_k} &\leq &  \frac{(\Theta + \hat{\Theta} )  }{\rho} \sum_{k=1}^{\infty} (k + r)^2 \Expn{ U_k}.
\end{array} 
}
\end{lemma}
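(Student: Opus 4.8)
The plan is to prove \eqref{eq:AVFR4NE_Wk_bound} by deriving a one-step recursion for the sequence $v^k := x^{k+1}-x^k+\eta_kGx^k$, fold into it the recursive bound on $\Delta_k$ supplied by Definition~\ref{de:ub_SG_estimator}, and then read off \eqref{eq:AVFR4NE_Deltak_bound} by telescoping that same recursion. Throughout I abbreviate $a_k:=k+r$. First I write $\widetilde S^k=S^k+e^k$ with $S^k=Gx^k-\gamma_kGx^{k-1}$ and $e^k:=\widetilde S^k-S^k$, and substitute \eqref{eq:VRKM4ME}; the $Gx^k$-terms cancel, leaving $v^k=\theta_k(x^k-x^{k-1})+\eta_k\gamma_kGx^{k-1}-\eta_ke^k$. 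Since $v^{k-1}=(x^k-x^{k-1})+\eta_{k-1}Gx^{k-1}$ and $\eta_{k-1}>0$, I eliminate $Gx^{k-1}$ to obtain
\myeqn{
v^k=\lambda_kv^{k-1}-\mu_k(x^k-x^{k-1})-\eta_ke^k,\qquad \lambda_k:=\tfrac{\eta_k\gamma_k}{\eta_{k-1}},\quad \mu_k:=\lambda_k-\theta_k,
}
and a direct computation with the parameters \eqref{eq:AVFR4NE_para_update2} gives $\lambda_k=\tfrac{(a_k-r)(a_k+1)}{(a_k+2)(a_k-1)}$, $\mu_k=\tfrac{2(a_k-r)}{(a_k+2)(a_k-1)}$, and $1-\lambda_k=\tfrac{r(a_k+1)-2}{(a_k+2)(a_k-1)}>0$ for $k\ge1$.

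Next, since $v^{k-1}$ and $x^k-x^{k-1}$ are $\Fc_k$-measurable and $\Expsn{k}{e^k}=0$, the cross term vanishes, so $\Expsn{k}{\norms{v^k}^2}=\norms{\lambda_kv^{k-1}-\mu_k(x^k-x^{k-1})}^2+\eta_k^2\Expsn{k}{\norms{e^k}^2}$. I then apply Young's inequality to the first term with the weight $\tau:=(1-\lambda_k)/\lambda_k>0$, chosen precisely so that $(1+\tau)\lambda_k=1$, and use $\Expsn{k}{\norms{e^k}^2}\le\Expsn{k}{\Delta_k}$ from Definition~\ref{de:ub_SG_estimator}. Multiplying through by $(a_k-1)(a_k+2)$ and simplifying, the $\norms{v^{k-1}}^2$-coefficient collapses to exactly $(a_k-r)(a_k+1)$, the $\norms{x^k-x^{k-1}}^2$-coefficient becomes $\tfrac{4(a_k-r)^2}{r(a_k+1)-2}$, and an extra term $(a_k-1)(a_k+2)\eta_k^2\Expsn{k}{\Delta_k}$ remains.

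To absorb that remainder I multiply the last line of \eqref{eq:ub_SG_estimator} by $r^2$ and use $(1-\gamma_k)^2=r^2/a_k^2$, giving $a_k^2\Expsn{k}{\Delta_k}\le(1-\rho)a_{k-1}^2\Delta_{k-1}+\Theta a_k^2U_k+\hat\Theta a_{k-1}^2U_{k-1}$. Adding to the bound above the $\Delta_k$- and $U_k$-parts of $W_k$, the aggregate coefficient of $\Expsn{k}{\Delta_k}$ is $a_k^2\big(\tfrac{4\beta^2(a_k-1)}{a_k+2}+\tfrac{4\beta^2(1-\rho)}{\rho}\big)\le\tfrac{4\beta^2}{\rho}a_k^2$; substituting the $\Delta_k$-recursion then regenerates exactly the $\Delta_{k-1}$- and $U_{k-1}$-coefficients of $W_{k-1}$ (so the $U_{k-1}$-terms cancel) together with a $U_k$-term of coefficient $\tfrac{4\beta^2(\Theta+\hat\Theta)}{\rho}a_k^2$. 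Using $(a_k-r)(a_k+1)=(a_k-2)(a_k+1)-(r-2)(a_k+1)$ I reach $\Expsn{k}{W_k}\le W_{k-1}-(r-2)(a_k+1)\norms{v^{k-1}}^2+\tfrac{4(a_k-r)^2}{r(a_k+1)-2}\norms{x^k-x^{k-1}}^2+\tfrac{4\beta^2(\Theta+\hat\Theta)a_k^2}{\rho}U_k$, and the scalar inequality $\tfrac{4(a_k-r)^2}{r(a_k+1)-2}\le\tfrac{2(2a_k-r+1)}{r}$ --- which for $a_k\ge r+1$ reduces after clearing denominators to $r^3+5r^2+2r-6\ge0$, valid for $r>2$ --- turns this into \eqref{eq:AVFR4NE_Wk_bound}. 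For \eqref{eq:AVFR4NE_Deltak_bound} I take total expectation in $a_k^2\Expn{\Delta_k}\le(1-\rho)a_{k-1}^2\Expn{\Delta_{k-1}}+\Theta a_k^2\Expn{U_k}+\hat\Theta a_{k-1}^2\Expn{U_{k-1}}$, sum over $k=1,\dots,K$, and use $\Delta_0=0$, $U_0=0$ (as $x^{-1}=x^0$) together with $\sum_{k=1}^K a_{k-1}^2\Expn{\Delta_{k-1}}\le\sum_{k=1}^K a_k^2\Expn{\Delta_k}$ and the same for $U$; this yields $\rho\sum_{k=1}^K a_k^2\Expn{\Delta_k}\le(\Theta+\hat\Theta)\sum_{k=1}^K a_k^2\Expn{U_k}$, and letting $K\to\infty$ finishes it.

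The main obstacle is the bookkeeping in the third step: the Young weight must be tuned so that the $\norms{v^{k-1}}^2$-coefficient telescopes exactly against the first term of $W_{k-1}$, and one must simultaneously verify that the accumulated coefficient of $\Expsn{k}{\Delta_k}$ never exceeds $\tfrac{4\beta^2}{\rho}a_k^2$, so that after applying the $\Delta_k$-recursion the $\Delta_{k-1}$- and $U_{k-1}$-terms collapse cleanly into $W_{k-1}$. Once that alignment of coefficients is arranged, the remaining estimates --- including the scalar bound on the $\norms{x^k-x^{k-1}}^2$-coefficient and the telescoping in the last step --- are routine.
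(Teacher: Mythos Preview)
Your proof is correct and follows essentially the same route as the paper: write $v^k$ in terms of $v^{k-1}$, $x^k-x^{k-1}$, and $e^k$ via the same $\lambda_k=\eta_k\gamma_k/\eta_{k-1}$, split the squared norm (the paper uses convexity of $\norms{\cdot}^2$, you use Young with $\tau=(1-\lambda_k)/\lambda_k$, which gives the identical bound), multiply by $(k+r-1)(k+r+2)$, then fold in the recursion \eqref{eq:ub_SG_estimator} for $\Delta_k$ and telescope. Your verification of the scalar bound on the $\norms{x^k-x^{k-1}}^2$-coefficient and the telescoping for \eqref{eq:AVFR4NE_Deltak_bound} match the paper's as well.
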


\proof{\textbf{Proof.}}
Since $\widetilde{S}^k = S^k + e^k = Gx^k - \gamma_kGx^{k-1} + e^k$ for $e^k := \widetilde{S}^k - S^k$, we obtain from \eqref{eq:VRKM4ME} that $x^{k+1} = x^k + \theta_k(x^k - x^{k-1}) - \eta_k Gx^k + \eta_k\gamma_kGx^{k-1} - \eta_k e^k$.
If we denote $v^k := x^{k+1} - x^k + \eta_kGx^k$ and $s_k := \frac{\eta_k\gamma_k}{\eta_{k-1}}$, then this expression is equivalent to 
\myeqn{
\arraycolsep=0.2em
\begin{array}{lcl}
v^k & = & s_k v^{k-1}   + (1 - s_k)\frac{\theta_k - s_k}{1-s_k} (x^k - x^{k-1}) - \eta_ke^k.
\end{array}
}
Leveraging \eqref{eq:AVFR4NE_para_update2}, we can easily check that $s_k = \frac{(k+r+1)k}{(k+r-1)(k+r+2)} \in (0, 1)$ and $\frac{(\theta_k-s_k)^2}{1-s_k} = \frac{4k^2}{[rk + (r-1)(r+2)](k+r-1)(k+r+2)}$.
Utilizing these expressions, the convexity of $\norms{\cdot}^2$, $\Expsn{k}{e^k} = 0$, $\Expsn{k}{\norms{e^k}^2} \leq  \Expsn{k}{\Delta_k}$, and $\eta_k$ from \eqref{eq:AVFR4NE_para_update}, we can derive that
\myeqn{
\arraycolsep=0.2em
\begin{array}{lcl}
\Expsn{k}{ \norms{ v^k}^2 } & = &  \norms{ s_k v^{k-1}  + (1 - s_k)\frac{\theta_k-s_k}{1-s_k}(x^k - x^{k-1}) }^2  + \eta_k^2\Expsn{k}{\norms{e^k}^2}   \vspace{1ex}\\ 
& \leq & s_k  \norms{v^{k-1}}^2 + \frac{(\theta_k-s_k)^2}{1-s_k}  \norms{x^k - x^{k-1}}^2 +  \eta_k^2\Expsn{k}{ \Delta_k }   \vspace{1ex}\\ 
& = &\frac{(k+r+1)k}{(k+r-1)(k+r+2)}  \norms{v^{k-1}  }^2 + \frac{4\beta^2 ( k+ r)^2}{(k + r +2)^2} \Expsn{k}{ \Delta_k }  \vspace{1ex}\\ 
&& + {~} \frac{4k^2}{[rk + (r-1)(r+2)](k+r-1)(k+r+2)}  \norms{x^k - x^{k-1}}^2   \vspace{1ex}\\ 
&\leq &\frac{(k+r+1)k}{(k+r-1)(k+r+2)}  \norms{v^{k-1}}^2 + \frac{4\beta^2 ( k+ r)^2}{(k+r-1)(k + r +2)} \Expsn{k}{ \Delta_k }   \vspace{1ex}\\ 
&& + {~} \frac{2(2k+r+1)}{r(k+r-1)(k+r+2)}  \norms{x^k - x^{k-1}}^2.
\end{array}
}
Here, we have used $k+r - 1 \leq k + r + 2$ and  $4rk^2 \leq 2(2k+r+1)[rk + (r-1)(r+2)]$ in the last inequality.
Multiplying this inequality by $(k+r - 1)(k+r+2)$, we obtain
\myeqn{ 
\arraycolsep=0.2em
\begin{array}{lll}
(k+r-1)(k+r+2) &  \Expsn{k}{ \norms{ v^k }^2 }  \leq  (k + r - 2)(k+r+1) \norms{v^{k-1} }^2    \vspace{1ex}\\ 
& - {~} (r-2)(k+r+1) \norms{v^{k-1}  }^2 + 4 \beta^2(k+r)^2 \Expsn{k}{ \Delta_k } \vspace{1ex}\\
& + {~} \frac{2(2k + r +1)}{r}  \norms{x^k - x^{k-1}}^2.
\end{array}
}
Next, recalling \eqref{eq:AVFR4NE_lm2_upper_bound} from the proof of Lemma~\ref{le:AVFR4NE_key_estimate} with $t_k - \mu = k + r$ and $U_k = \frac{1}{n}\sum_{i=1}^n\norms{G_ix^k - G_ix^{k-1}}^2$, we get
\myeq{eq:AVFR4NE_lm2_proof100}{
\arraycolsep=0.2em
\begin{array}{lcl}
(k + r)^2 \Expsn{k}{\Delta_k} &\leq & \frac{(1 - \rho )}{\rho }\big[ (k  + r - 1)^2  \Delta_{k-1}  - (k + r)^2 \Expsn{k}{ \Delta_k } \big]    \vspace{1ex}\\ 
&& + {~} \frac{\hat{\Theta} }{\rho} \big[ (k + r - 1)^2 U_{k-1} -  (k + r)^2  U_k  \big] + \frac{(\Theta + \hat{\Theta} ) (k + r)^2 }{\rho}   U_k.
\end{array} 
}
Combining \eqref{eq:AVFR4NE_lm2_proof100} and the last inequality, and using $W_k$ from \eqref{eq:AVFR4NE_Wk_def}, we obtain \eqref{eq:AVFR4NE_Wk_bound}.

Finally, taking the total expectation on both sides of  \eqref{eq:AVFR4NE_lm2_proof100}, and summing up the result from $k = 1$ to $k = K$, and noting that $\Delta_0 = 0$ and $U_0 = 0$, we get
\myeqn{ 
\arraycolsep=0.2em
\begin{array}{lcl}
\sum_{k=1}^K (k + r)^2 \Expn{\Delta_k} &\leq & \frac{(1 - \rho )r^2  \Delta_{0}}{\rho }  + \frac{\hat{\Theta}r^2 }{\rho}  U_0 + \frac{(\Theta + \hat{\Theta} )  }{\rho} \sum_{k=1}^K (k + r)^2 \Expn{ U_k} \vspace{1ex}\\
& = &  \frac{(\Theta + \hat{\Theta} )  }{\rho} \sum_{k=1}^K (k + r)^2 \Expn{ U_k}.
\end{array} 
}
Taking the limit on both sides of this inequality as $K \to \infty$, we obtain \eqref{eq:AVFR4NE_Deltak_bound}.
\Eproof
\endproof

\begin{lemma}\label{le:AVFR4NE_auxiliary_lemma2}
Under the same conditions and parameters as in Theorem~\ref{th:AVFR4NE_convergence}, let 
\myeq{eq:AVFR4NE_Zk_def}{
\arraycolsep=0.2em
\begin{array}{lll}
Z_k & := & (k+r+2)^2\norms{ x^{k+1} - x^k}^2 + \frac{4\beta^2(1-\rho)(k+r)^2}{\rho}\Delta_k + \frac{4\beta^2\hat{\Theta}(k+r)^2}{\rho}U_k,
\end{array}
}
where $U_k := \frac{1}{n}\sum_{i=1}^n\norms{G_ix^k - G_ix^{k-1} }^2$.
Then, we have
\myeq{eq:AVFR4NE_Zk_bound}{
\arraycolsep=0.2em
\begin{array}{lcl}
\Expsn{k} {Z_k} & \leq & Z_{k-1} + 4r\beta^2(2k + r + 3) \norms{Gx^k}^2 + \frac{4\beta^2(\Theta + \hat{\Theta} ) (k + r)^2 }{\rho} U_k.
\end{array}
}
\end{lemma}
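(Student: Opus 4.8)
The plan is to convert the update rule for $x^{k+1}-x^k$ into a one-step recursion for $(k+r+2)^2\Expsn{k}{\norms{x^{k+1}-x^k}^2}$ and then absorb the noise term via the $\Delta_k$-recursion \eqref{eq:AVFR4NE_lm2_proof100} already obtained in the proof of Lemma~\ref{le:AVFR4NE_key_bound_of_Wk}.

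First I would write $\widetilde{S}^k = S^k + e^k$ with $S^k = Gx^k - \gamma_k Gx^{k-1}$ and $e^k := \widetilde{S}^k - S^k$, so that $x^{k+1}-x^k = [\theta_k(x^k-x^{k-1}) - \eta_k S^k] - \eta_k e^k$ with the bracketed vector $\Fc_k$-measurable. Since $\Expsn{k}{e^k} = 0$, this yields
\[
\Expsn{k}{\norms{x^{k+1}-x^k}^2} = \norms{\theta_k(x^k-x^{k-1}) - \eta_k S^k}^2 + \eta_k^2\Expsn{k}{\norms{e^k}^2}.
\]
Next I would expand the deterministic square, using $\iprods{x^k-x^{k-1},\,S^k} = \iprods{x^k-x^{k-1},\,Gx^k-Gx^{k-1}} + (1-\gamma_k)\iprods{x^k-x^{k-1},\,Gx^{k-1}}$ and the identity $\norms{S^k}^2 = (1-\gamma_k)\norms{Gx^k}^2 + \gamma_k\norms{Gx^k-Gx^{k-1}}^2 - \gamma_k(1-\gamma_k)\norms{Gx^{k-1}}^2$. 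Then I replace $-2\theta_k\eta_k\iprods{x^k-x^{k-1},\,Gx^k-Gx^{k-1}}$ by its upper bound $-\tfrac{2\theta_k\eta_k}{L}U_k$ via Condition~\eqref{eq:co-coerciveness_of_G}, bound $\norms{Gx^k-Gx^{k-1}}^2 \le U_k$ by convexity of $\norms{\cdot}^2$, and estimate the remaining cross term $(1-\gamma_k)\iprods{x^k-x^{k-1},\,Gx^{k-1}}$ by Young's inequality with weight $\tfrac{1}{2\beta}$.

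The weight $\tfrac{1}{2\beta}$ is the crucial choice: after inserting $\theta_k = \tfrac{k}{k+r+2}$, $\gamma_k = \tfrac{k}{k+r}$, $\eta_k = \tfrac{2\beta(k+r)}{k+r+2}$, the positive $\norms{Gx^{k-1}}^2$-contribution from Young equals $\eta_k^2\gamma_k(1-\gamma_k) = \tfrac{4\beta^2 kr}{(k+r+2)^2}$, so the $\norms{Gx^{k-1}}^2$ terms cancel identically; and the net $U_k$-coefficient produced by the square is $\eta_k^2\gamma_k - \tfrac{2\theta_k\eta_k}{L} = \tfrac{4\beta k(k+r)}{(k+r+2)^2}\big(\beta - \tfrac1L\big) \le 0$ since $\beta \le \bar{\beta} \le \tfrac1L$, and is discarded. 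What remains, after multiplying by $(k+r+2)^2$, is
\[
(k+r+2)^2\norms{\theta_k(x^k-x^{k-1}) - \eta_k S^k}^2 \le k(k+r)\norms{x^k-x^{k-1}}^2 + 4r\beta^2(k+r)\norms{Gx^k}^2,
\]
and since $k(k+r) \le (k+r+1)^2$ and $k+r \le 2k+r+3$ this is at most $(k+r+1)^2\norms{x^k-x^{k-1}}^2 + 4r\beta^2(2k+r+3)\norms{Gx^k}^2$. Adding $\eta_k^2\Expsn{k}{\norms{e^k}^2} \le \eta_k^2\Expsn{k}{\Delta_k}$ and using $(k+r+2)^2\eta_k^2 = 4\beta^2(k+r)^2$ gives
\[
(k+r+2)^2\Expsn{k}{\norms{x^{k+1}-x^k}^2} \le (k+r+1)^2\norms{x^k-x^{k-1}}^2 + 4r\beta^2(2k+r+3)\norms{Gx^k}^2 + 4\beta^2(k+r)^2\Expsn{k}{\Delta_k}.
\]

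Finally I would add the remaining two pieces of $\Expsn{k}{Z_k}$ from \eqref{eq:AVFR4NE_Zk_def}: the term $\tfrac{4\beta^2(1-\rho)(k+r)^2}{\rho}\Expsn{k}{\Delta_k}$ merges with $4\beta^2(k+r)^2\Expsn{k}{\Delta_k}$ above into $\tfrac{4\beta^2(k+r)^2}{\rho}\Expsn{k}{\Delta_k}$, to which I apply \eqref{eq:AVFR4NE_lm2_proof100} rearranged as $\tfrac{4\beta^2(k+r)^2}{\rho}\Expsn{k}{\Delta_k} \le \tfrac{4\beta^2(1-\rho)(k+r-1)^2}{\rho}\Delta_{k-1} + \tfrac{4\beta^2\hat{\Theta}(k+r-1)^2}{\rho}U_{k-1} + \tfrac{4\beta^2\Theta(k+r)^2}{\rho}U_k$. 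The $\Delta_{k-1}$ and $U_{k-1}$ terms, together with $(k+r+1)^2\norms{x^k-x^{k-1}}^2$, reassemble precisely into $Z_{k-1}$, while $\tfrac{4\beta^2\Theta(k+r)^2}{\rho}U_k + \tfrac{4\beta^2\hat{\Theta}(k+r)^2}{\rho}U_k = \tfrac{4\beta^2(\Theta+\hat{\Theta})(k+r)^2}{\rho}U_k$, which is exactly \eqref{eq:AVFR4NE_Zk_bound}. The main obstacle is the bookkeeping inside the expansion of $\norms{\theta_k(x^k-x^{k-1}) - \eta_k S^k}^2$: the Young weight must be chosen so that the $\norms{Gx^{k-1}}^2$ terms vanish while the $\norms{x^k-x^{k-1}}^2$ coefficient stays $\le (k+r+1)^2$ and the $U_k$ coefficient stays $\le 0$; this is where $\beta \le 1/L$ and the explicit forms of $\theta_k,\gamma_k,\eta_k$ are used, and the rest is telescoping and matching constants.
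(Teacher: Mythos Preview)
Your argument is correct and follows essentially the same route as the paper: expand $\Expsn{k}{\norms{x^{k+1}-x^k}^2}$ via the update rule, bound the pieces with Condition~\eqref{eq:co-coerciveness_of_G} and Young's inequality with weight $1/(2\beta)$, multiply by $(k+r+2)^2$, and then absorb the $\Delta_k$-term through \eqref{eq:AVFR4NE_lm2_proof100}. The only cosmetic difference is that the paper splits $S^k=\gamma_k(Gx^k-Gx^{k-1})+(1-\gamma_k)Gx^k$ and applies Young to the $Gx^k$ cross term, whereas you split $S^k=(Gx^k-Gx^{k-1})+(1-\gamma_k)Gx^{k-1}$ and apply Young to the $Gx^{k-1}$ cross term; your version yields an exact cancellation of $\norms{Gx^{k-1}}^2$ and a slightly smaller $\norms{Gx^k}^2$ coefficient ($k+r$ instead of $2k+r$) before both are crudely bounded by $2k+r+3$, and it only needs $\beta\le 1/L$ for the $U_k$ sign rather than the paper's $\beta<1/((r+1)L)$, but both hold under Theorem~\ref{th:AVFR4NE_convergence}.
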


\proof{\textbf{Proof.}}
Since $x^{k+1} - x^k  = \theta_k(x^k - x^{k-1})  - \eta_k(Gx^k - \gamma_kGx^{k-1}) - \eta_ke^k$ from \eqref{eq:VRKM4ME} with $e^k = \widetilde{S}^k - S^k$ and $\Expsn{k}{e^k} = 0$, we can derive that
\myeqn{
\hspace{-0.5ex}
\arraycolsep=0.2em
\begin{array}{lcl}
\Expsn{k}{ \norms{x^{k+1} - x^k}^2 }  
& = & \theta_k^2   \norms{x^k - x^{k-1}}^2  - 2\theta_k\eta_k \iprods{Gx^k - \gamma_kGx^{k-1}, x^k - x^{k-1}}   \vspace{1ex}\\ 
&& + {~} \eta_k^2 \norms{Gx^k - \gamma_kGx^{k-1}}^2 + \eta_k^2\Expsn{k}{\norms{e^k}^2}   \vspace{1ex}\\ 
& = & \theta_k^2   \norms{x^k - x^{k-1}}^2  - 2\theta_k\eta_k\gamma_k \iprods{Gx^k - Gx^{k-1}, x^k - x^{k-1}}   \vspace{1ex}\\ 
&& - {~} 2\theta_k\eta_k(1-\gamma_k)\iprods{Gx^k, x^k - x^{k-1}} \vspace{1ex}\\
&& + {~}  \eta_k^2 \norms{Gx^k - \gamma_kGx^{k-1}}^2 + \eta_k^2\Expsn{k}{\norms{e^k}^2}.
\end{array}
\hspace{-2ex}
}
By Young's inequality in $\myeqc{1}$, Condition \eqref{eq:co-coerciveness_of_G}, and the definition of $U_k$, we have
\myeqn{
\arraycolsep=0.2em
\begin{array}{lcl}
-2\iprods{Gx^k, x^k - x^{k-1}} & \overset{\tiny\myeqc{1}}{ \leq } & 2\beta \norms{Gx^k}^2 + \frac{1}{2\beta} \norms{x^k - x^{k-1}}^2,   \vspace{0ex}\\ 
\iprods{Gx^k - Gx^{k-1}, x^k - x^{k-1}} & \overset{\tiny\eqref{eq:co-coerciveness_of_G}}{ \geq } & \frac{1}{nL}\sum_{i=1}^n\norms{G_ix^k - G_ix^{k-1}}^2 = \frac{1}{L}U_k.
\end{array}
}
Moreover, by Young's inequality again in $\myeqc{1}$, we also have
\myeqn{
\arraycolsep=0.1em
\begin{array}{lcl}
\norms{Gx^k - \gamma_kGx^{k-1}}^2 & = & (1 - \gamma_k)\norms{Gx^k}^2 - \gamma_k(1-\gamma_k) \norms{Gx^{k-1}}^2 + \gamma_k\norms{Gx^k - Gx^{k-1}}^2   \vspace{0.0ex}\\ 
& \overset{{\tiny\myeqc{1}}}{ \leq } & (1 - \gamma_k)\norms{Gx^k}^2 - \gamma_k(1-\gamma_k) \norms{Gx^{k-1}}^2 + \gamma_k U_k.
\end{array}
}
Combining the last four expressions, and then using $\Expsn{k}{\norms{e^k}^2} \leq \Expsn{k}{ \Delta_k}$, we get
\myeqn{
\arraycolsep=0.2em
\begin{array}{lcl}
\Expsn{k}{ \norms{x^{k+1} - x^k}^2 }  & \leq & \big[ \theta_k^2 + \frac{\theta_k\eta_k(1-\gamma_k)}{2\beta} \big]  \norms{x^k - x^{k-1}}^2   \vspace{1ex}\\ 
&& + {~}  \eta_k(1-\gamma_k)( 2\beta\theta_k + \eta_k ) \norms{Gx^k}^2 - \gamma_k(1-\gamma_k)\eta_k^2 \norms{Gx^{k-1}}^2   \vspace{1ex}\\ 
&& - {~} \gamma_k \eta_k \left( \frac{2\theta_k}{L} - \eta_k \right) U_k  + \eta_k^2\Expsn{k}{\Delta_k}.
\end{array}
}
Next, from the update rule \eqref{eq:AVFR4NE_para_update2}, we can easily check that 
$\theta_k^2 + \frac{\theta_k\eta_k(1-\gamma_k)}{2\beta} = \frac{k(k+r)}{(k+r+2)^2}$, 
$\eta_k(1-\gamma_k) ( 2\beta\theta_k + \eta_k ) = \frac{4\beta^2r (2k+r)}{(k+r+2)^2}$, 
$ \gamma_k(1-\gamma_k)\eta_k^2 =  \frac{4\beta^2r k}{(k+r+2)^2} > 0$, and
$ \frac{2\theta_k}{L} - \eta_k =  \frac{2k}{(k+r+2)L}\big(1 - \frac{(k+r)L\beta}{k} \big) \geq  \frac{2k}{(k+r+2)L}\big(1 - (r+1)L\beta \big) > 0$.
Using these relations and dropping the term with $U_k$, we can derive from the last inequality that
\myeqn{
\arraycolsep=0.2em
\begin{array}{lcl}
\Expsn{k}{ \norms{x^{k+1} - x^k}^2 }  & \leq & \frac{k(k+r)}{(k + r + 2)^2}   \norms{x^k - x^{k-1}}^2   +   \frac{4\beta^2r(2k+r)}{(k+r+2)^2}   \norms{Gx^k}^2 + \frac{4\beta^2(k+r)^2}{(k+r+2)^2} \Expsn{k}{ \Delta_k }.
\end{array}
}
Multiplying this inequality by $(k + r + 2)^2$ and noting that $k(k+r) \leq (k+r+1)^2$ and $2k + r \leq 2k+r+3$, one can further derive
\myeqn{
\arraycolsep=0.2em
\begin{array}{lcl}
(k+r + 2)^2 \Expsn{k}{ \norms{x^{k+1} - x^k}^2 }  & \leq & (k+r+1)^2 \norms{x^k - x^{k-1}}^2  + 4 \beta^2(k+r)^2 \Expsn{k}{ \Delta_k }   \vspace{1ex}\\ 
&& + {~} 4r\beta^2(2k + r + 3) \norms{Gx^k}^2.
\end{array}
}
Combining this inequality and \eqref{eq:AVFR4NE_lm2_proof100}, and using the definition \eqref{eq:AVFR4NE_Zk_def} of $Z_k$, we get \eqref{eq:AVFR4NE_Zk_bound}.
\Eproof
\endproof

\vspace{1ex}
\beforesubsec
\subsection{The proof of Corollaries~\ref{co:SVRG_complexity} and \ref{co:SAGA_complexity}.}\label{apdx:co:SVRG_SAGA_complexity}
\aftersubsec
We now prove  Corollaries~\ref{co:SVRG_complexity} and \ref{co:SAGA_complexity}.

\vspace{1ex}
\proof{\textbf{Proof of Corollary~\ref{co:SVRG_complexity}.}}
Since $4-6\mbf{p} + 3\mbf{p}^2 \leq 4$ and $2 - 3\mbf{p} + \mbf{p}^2 \leq 2$, by Lemma~\ref{le:SVRG_estimator}, $\widetilde{S}^k$ constructed by \eqref{eq:SVRG_estimator},  it still satisfies Definition~\ref{de:ub_SG_estimator} with  $\rho = \frac{\mbf{p}}{2}$, $\Theta := \frac{4}{b\mbf{p}}$, and $\hat{\Theta} := \frac{4}{b\mbf{p}}$.
Hence,  with $r := 3$, we can show that $\beta := \frac{\bar{\beta}}{2} = \frac{\rho}{2L [\rho + 4(\Theta + \hat{\Theta})]} = \frac{b\mbf{p}^2}{2L(b\mbf{p}^2 + 64)}$, and $\Lambda = \frac{\beta}{2L} =  \frac{b\mbf{p}^2}{4L^2(b\mbf{p}^2 + 64)}$.
Suppose that $1 \leq b\mbf{p}^2 \leq 32$, then we have $\frac{1}{130 L} \leq \beta \leq \frac{1}{6 L}$ and $\psi = \frac{4\beta^2(\Theta + \hat{\Theta})}{\rho\Lambda} = \frac{64  }{64 + b\mbf{p}^2} \in \big(\frac{2}{3}, 1\big)$.

Using the bounds of $\beta$ and $\psi$, and $r := 3$ we can show from \eqref{eq:AVFR4NE_constants} that  
\myeqn{
\arraycolsep=0.2em
\begin{array}{lcllcl}
C_0 & := & 3 (10 + 24L^2\beta^2) \leq 32,   &  C_1 & := &  \frac{72L^2\beta^2}{5} + \big(\frac{2}{3} +  \psi \big) C_0 \leq 53.8, \vspace{1ex}\\  
C_2 & := & 36L^2\beta^2  + \big(\frac{50}{3} + \psi \big) C_0 + \frac{100C_1}{3} \leq 2360,    \ \  & C_3 &:= & \frac{25(C_1 + C_2)}{18 } \leq 3353.
\end{array}
}
Therefore, we obtain \eqref{eq:AVFR4NE_for_SVRG} from \eqref{eq:AVFR4NE_convergence2b} and \eqref{eq:AVFR4NE_convergence2c}.

Finally, since $C_3 = \BigO{1}$ and $\beta = \BigOs{\frac{b\mbf{p}^2}{L}}$, from \eqref{eq:AVFR4NE_for_SVRG}, to guarantee $\Expk{ \norms{Gx^k}^2 } \leq \epsilon^2$, we can impose $\frac{C_3 L^2\Rc_0^2}{b^2\mbf{p}^4(k+2)^2} \leq \epsilon^2$, where $\Rc_0 := \norms{x^0 - x^{\star}}$.
This leads to $k = \BigOs{ \frac{L\Rc_0}{b\mbf{p}^2\epsilon} }$.
Hence, the expected number of evaluations of $G_i$ is $\Expn{ \Tc_{G_i} } = n +  (\mbf{p}n + 3b)k = \BigOs{ n +  \frac{L\Rc_0}{\epsilon}\big( \frac{n}{b\mbf{p}} +   \frac{1}{\mbf{p}^2} \big) }$.
Clearly, if we choose $b = \BigOs{n^{2/3}}$ and $\mbf{p} = \BigOs{\frac{1}{n^{1/3}}}$, then we get $\Expn{ \Tc_{G_i}} = \BigOs{n + \frac{n^{2/3}L\Rc_0}{\epsilon}}$.
\Eproof
\endproof

\vspace{1ex}
\proof{\textbf{Proof of Corollary~\ref{co:SAGA_complexity}.}}
Since $2(n-b)(2n+b) + b^2 \leq 4n^2$ and $2(n-b)(2n+b) \leq 4n^2$, by Lemma~\ref{le:SAGA_estimator}, $\widetilde{S}^k$ is constructed by \eqref{eq:SAGA_estimator} still satisfies Definition~\ref{de:ub_SG_estimator} with  $\rho = \frac{b}{2n}$, $\Theta := \frac{4n}{b^2}$, and $\hat{\Theta} := \frac{4n}{b^2}$.
Hence,  for $r := 3$, we can show that $\beta := \frac{\bar{\beta}}{2} = \frac{\rho}{2L [\rho + 4(\Theta + \hat{\Theta})]} = \frac{b^3}{2L(b^3 + 64 n^2)}$.
If $1 \leq b \leq 16 n^{2/3}$, then we have $\frac{1}{2L(1 + 64n^2)} \leq \beta \leq \frac{1}{4L}$ and  $\Lambda := \frac{\beta}{2L} = \BigOs{\frac{b^3}{L^2n^2}} \in \big[ \frac{1}{4L^2(1 + 64n^2)}, \frac{1}{4L^2}\big]$.
Hence, it is easy to check  that $L\beta \leq \frac{1}{4}$ and $\psi := \frac{4\beta^2(\Theta + \hat{\Theta})}{\rho\Lambda} = \frac{64  n^2}{(64n^2 + b^3)} \in \big[\frac{1}{2}, 1\big]$.

Using the bounds of $\beta$ and $\psi$, and $r := 3$ we can show from \eqref{eq:AVFR4NE_constants} that  
\myeqn{
\arraycolsep=0.2em
\begin{array}{lcllcl}
C_0 & := & 3 (10 + 24L^2\beta^2) \leq 34.5,   &  C_1 & := &  \frac{72L^2\beta^2}{5} + \big(\frac{2}{3} +  \psi \big) C_0 \leq 58.4, \vspace{1ex}\\  
C_2 & := & 36L^2\beta^2  + \big(\frac{50}{3} + \psi \big) C_0 + \frac{100C_1}{3} \leq 2559,    \ \  & C_3 &:= & \frac{25(C_1 + C_2)}{18 } \leq 3636.
\end{array}
}
Thus we obtain \eqref{eq:AVFR4NE_for_SAGA} from \eqref{eq:AVFR4NE_convergence2b} and \eqref{eq:AVFR4NE_convergence2c}.

Finally, since $C_3 = \BigO{1}$ and $\beta = \BigO{\frac{b^3}{Ln^2}}$, from \eqref{eq:AVFR4NE_for_SVRG}, to guarantee $\Expk{ \norms{Gx^k}^2 } \leq \epsilon^2$, we can impose $\frac{C_3 L^2\Rc_0^2 n^4}{b^6(k+2)^2} \leq \epsilon^2$.
This leads to $k = \BigOs{ \frac{n^2 L\Rc_0}{b^3\epsilon} }$.
Hence, the expected number of evaluations of $G_i$ is $\Expn{ \Tc_{G_i}} = n + 2bk = \BigOs{ n + \frac{n^2L\Rc_0}{b^2\epsilon}}$.
Clearly, if we choose $b = \BigOs{n^{2/3}}$, then we obtain $\Expn{ \Tc_{G_i}} = \BigOs{n + \frac{n^{2/3}L\Rc_0}{\epsilon}}$.
\Eproof
\endproof

\beforesec
\section{Proof of Theoretical Results for \eqref{eq:VRKM4ME} in Section~\ref{sec:star_monotone_convergence}.}\label{apdx:th:AVFR4SNE_convergence}
\aftersec
First, we prove the following key lemma.

\begin{lemma}\label{le:AVFR4SNE_key_estimate}
Suppose that $G$ in \eqref{eq:ME} satisfies Condition~\eqref{eq:co-coerciveness_of_G} and Assumption~\ref{as:A3}.
Given $s$, $r$, and $\beta$ such that $s-1\geq r > (s-1)\rho$ and $4r s^2 > 2(s-1)(2s-r-1)$, and
\myeq{eq:AVFR4SNE_para_condition2}{
\arraycolsep=0.2em
\begin{array}{lll}
0 < \beta\sigma \leq \frac{(2r + 1)(2s-r-1)}{4r s^2 - 2(s-1)(2s-r-1)} \quad \text{and}\quad 0 < \beta\sigma < \frac{(2r+1)\rho}{2[r - \rho(s-1)]}.
\end{array}
}
Let $\sets{x^k}$ be generated by \eqref{eq:VRKM4ME} to solve \eqref{eq:ME} using an estimator $\widetilde{S}^k$ for $S^k$ satisfying Definition~\ref{de:ub_SG_estimator} and  the following fixed parameters:
\myeq{eq:AVFR4SNE_para_update_with_r}{
\arraycolsep=0.2em
\begin{array}{lll}
\theta_k = \theta := \frac{s-r-1}{s}, \quad \gamma_k = \gamma := \frac{s-r-1}{s-1} , \quad \text{and} \quad \eta_k = \eta := \frac{2\beta(s-1)}{s}.
\end{array}
}
For $\omega := \frac{2r \beta\sigma}{2r + 1 + 2\beta\sigma(s-1)}$,  consider the following \textbf{Lyapunov function}:
\myeq{eq:AVFR4SNE_Lfunc}{
\hspace{-1ex}
\arraycolsep=0.2em
\begin{array}{lcl}
\Ec_k &:= & 4r \beta (s-1) \big[ \iprods{Gx^k, x^k - x^{\star}} - \beta \norms{Gx^k}^2 \big]  + \norms{ r(x^k - x^{\star})  + s(x^{k+1} - x^k) }^2  \vspace{1ex} \\
&& + {~}   r \norms{x^k - x^{\star}}^2  +  \frac{4\beta^2(1 - \rho )(s-1)^2 }{\rho - \omega }  \Delta_k + \frac{4\beta^2\hat{\Theta}(s-1)^2}{\rho - \omega} U_k,
\end{array}
\hspace{-2ex}
}
where $U_k := \frac{1}{n}\sum_{i=1}^n\norms{G_ix^k - G_ix^{k-1}}^2$.
Then, for all $k \geq 1$, we have
\myeq{eq:AVFR4SNE_key_estimate}{
\arraycolsep=0.2em
\begin{array}{lcl}
\Expsk{k}{\Ec_k } & \leq & (1-\omega) \Ec_{k-1} - \frac{4\beta r^2(2r+1)}{2r+1 +  2\beta\sigma(s-1)} \big(\frac{1}{2L} - \beta\big) \norms{Gx^{k-1}}^2  -  \alpha  U_k, 
\end{array}
}
where $\alpha := 4\beta(s-1)(s - r-1) \Big[ \frac{1}{L} -  \left( 1   + \frac{(s-1)(\Theta + \hat{\Theta})}{(s-r-1)(\rho - \omega) } \right)\beta \Big]$.
\end{lemma}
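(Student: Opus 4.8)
The plan is to follow the same architecture as the proof of Lemma~\ref{le:AVFR4NE_key_estimate} but with \emph{constant} parameters $t_k\equiv t := s$, $\mu := $ appropriately chosen so that $t-r-\mu = s-r-1$ and $t-\mu = s-1$, which forces $\mu = 1$. With this identification, the update rules \eqref{eq:AVFR4SNE_para_update_with_r} are exactly the specialization of \eqref{eq:AVFR4NE_para_update} with $t_k = s$ for all $k$. First I would introduce the same auxiliary quantities $\Qc_k := \norms{r(x^k-x^{\star}) + s(x^{k+1}-x^k)}^2 + r\norms{x^k-x^{\star}}^2$ and $\Lc_k := 4r\beta(s-1)[\iprods{Gx^k,x^k-x^{\star}} - \beta\norms{Gx^k}^2] + \Qc_k$, and reuse Lemma~\ref{le:bound_Qk} and Lemma~\ref{le:bound_Lk} verbatim (their conclusions hold for any admissible choice of $t_k$, in particular the constant one). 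This immediately gives, since $t_{k-1}-t_k+r = r$, the one-step inequality
\[
\Lc_{k-1} - \Expsk{k}{\Lc_k} \;\geq\; \mu(2s-r-1)\norms{x^k-x^{k-1}}^2 - 4\beta^2(s-1)^2\Expsk{k}{\norms{e^k}^2} + 4r\beta r\big[\iprods{Gx^{k-1},x^{k-1}-x^{\star}} - \beta\norms{Gx^{k-1}}^2\big] + 4\beta(s-1)(s-r-1)\big(\tfrac1L - \beta\big)U_k .
\]

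The new ingredient — and the main obstacle — is converting the \emph{additive} decrease into a \emph{geometric} (contraction) decrease: we must produce the factor $(1-\omega)$ in front of $\Ec_{k-1}$ rather than just $\Ec_{k-1} - (\text{nonnegative terms})$. The mechanism is Assumption~\ref{as:A3}: because $\iprods{Gx^k, x^k-x^{\star}} \geq \sigma\norms{x^k-x^{\star}}^2$ and (via the $\tfrac1L$-co-coercivity) $\iprods{Gx^k, x^k-x^{\star}} \geq \tfrac{1}{2L}\norms{Gx^k}^2 + \tfrac{\sigma}{2}\norms{x^k-x^{\star}}^2$, one can extract from the nonnegative ``anchor'' term $4r\beta(s-1)[\iprods{Gx^k,x^k-x^{\star}} - \beta\norms{Gx^k}^2]$ and from $r\norms{x^k-x^{\star}}^2$ a negative multiple of $\Ec_k$ itself, plus a leftover $\norms{Gx^{k-1}}^2$ term with the sign $\big(\tfrac1L - 2\beta\big)$ coming from $\Qc$. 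Choosing $\omega = \tfrac{2r\beta\sigma}{2r+1+2\beta\sigma(s-1)}$ is precisely what balances the coefficient $r$ of $\norms{x^k-x^{\star}}^2$ against the $\sigma$-strong-quasi-monotone gain; I would verify this by collecting the coefficients of $\norms{x^k-x^{\star}}^2$, of the inner-product gap term, and of $\norms{Gx^k}^2$, and checking that $-\omega\Ec_k$ exactly absorbs them. The cross term $2rs\iprods{x^{k+1}-x^k, x^k-x^{\star}}$ hidden inside $\Qc_k$ needs Young's inequality with a weight tuned to $\omega$, which is where the quadratic constraint $4rs^2 > 2(s-1)(2s-r-1)$ and the first bound in \eqref{eq:AVFR4SNE_para_condition2} enter.

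The remaining two steps are routine bookkeeping. First, handle the variance term: apply the recursive bound in \eqref{eq:ub_SG_estimator} with the constant value $(1-\gamma_k)^2 = (1-\gamma_{k-1})^2 = \tfrac{r^2}{(s-1)^2}$, exactly as in \eqref{eq:AVFR4NE_lm2_upper_bound}, to telescope $\Delta_k$ and $U_{k-1}$; the denominator $\rho-\omega$ (rather than $\rho$) in the coefficients of $\Delta_k$ and $U_k$ in \eqref{eq:AVFR4SNE_Lfunc} is chosen so that, after multiplying the tail $\Delta_k$ term by the geometric factor, the surviving coefficient of $U_k$ is nonnegative — this is the role of the second inequality $\beta\sigma < \tfrac{(2r+1)\rho}{2[r-\rho(s-1)]}$, which guarantees $\omega < \rho$. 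Second, assemble everything: rearranging to isolate $\Ec_k$ from \eqref{eq:AVFR4SNE_Lfunc}, one collects the net coefficient of $U_k$, which after all cancellations equals $-\alpha$ with $\alpha = 4\beta(s-1)(s-r-1)\big[\tfrac1L - \big(1 + \tfrac{(s-1)(\Theta+\hat\Theta)}{(s-r-1)(\rho-\omega)}\big)\beta\big]$, and the net coefficient of $\norms{Gx^{k-1}}^2$, which equals $-\tfrac{4\beta r^2(2r+1)}{2r+1+2\beta\sigma(s-1)}\big(\tfrac{1}{2L} - \beta\big)$; the terms in $\norms{x^k-x^{k-1}}^2$ are dropped (they are nonnegative on the favorable side). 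This yields \eqref{eq:AVFR4SNE_key_estimate}. I expect the sign-tracking in the contraction step — making sure the $\omega$-multiple of every component of $\Ec_k$ is simultaneously dominated — to be the only genuinely delicate part; the rest parallels Appendix~\ref{apdx:le:AVFR4NE_key_estimate} closely.
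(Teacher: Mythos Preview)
Your architecture is right and close to the paper's, but the contraction step is indexed incorrectly, and as written it would not go through. The $\omega$-multiple is \emph{not} extracted from the anchor term $4r\beta(s-1)[\iprods{Gx^k,x^k-x^{\star}}-\beta\norms{Gx^k}^2]$ at index $k$ (that term sits inside $\Expsk{k}{\Lc_k}$ on the left and cannot by itself dominate a multiple of the full $\Ec_k$). Instead, it is extracted from the leftover $4\beta r^2\big[\iprods{Gx^{k-1},x^{k-1}-x^{\star}}-\beta\norms{Gx^{k-1}}^2\big]$ that your one-step inequality already places on the right-hand side (the $t_{k-1}-t_k+r=r$ residue). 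The paper splits this $k{-}1$ term with a scalar $c\in[0,1]$: the fraction $1-c$ is rewritten as $\tfrac{(1-c)r}{s-1}\cdot 4r\beta(s-1)[\cdot]_{k-1}$, and then Young's inequality is applied to $\norms{r(x^{k-1}-x^{\star})+s(x^k-x^{k-1})}^2$ (a piece of $\Lc_{k-1}$, not $\Qc_k$) to add and subtract the remaining components of $\Lc_{k-1}$, thereby rebuilding $\tfrac{(1-c)r}{s-1}\Lc_{k-1}$ at the cost of penalties in $\norms{x^{k-1}-x^{\star}}^2$ and $\norms{x^k-x^{k-1}}^2$. The fraction $c$ is processed via $\iprods{Gx^{k-1},x^{k-1}-x^{\star}}-\beta\norms{Gx^{k-1}}^2\geq(\tfrac{1}{2L}-\beta)\norms{Gx^{k-1}}^2+\tfrac{\sigma}{2}\norms{x^{k-1}-x^{\star}}^2$, which supplies both the $\norms{Gx^{k-1}}^2$ term in \eqref{eq:AVFR4SNE_key_estimate} and the $\sigma$-gain needed to cancel the Young penalty. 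Choosing $c=\tfrac{2r+1}{2r+1+2\beta\sigma(s-1)}$ zeroes the $\norms{x^{k-1}-x^{\star}}^2$ coefficient and gives $\omega=\tfrac{(1-c)r}{s-1}$; the first bound in \eqref{eq:AVFR4SNE_para_condition2} is exactly the nonnegativity of the resulting $\norms{x^k-x^{k-1}}^2$ coefficient.

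Once you shift everything to index $k-1$, the remainder of your plan---the variance telescoping with $\rho-\omega$ in the denominators (guaranteed positive by the second bound in \eqref{eq:AVFR4SNE_para_condition2}) and the assembly of the $U_k$ coefficient into $\alpha$---is correct and matches the paper.
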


\proof{\textbf{Proof.}}
For given $x^{\star} \in \zer{G}$,  $s > 0$, $r > 0$, and $\beta > 0$, we introduce the following function:
\myeq{eq:AVFR4SNE_lm3_proof1}{
\arraycolsep=0.2em
\begin{array}{lcl}
\Lc_k &:= & 4 r\beta(s-1)  \big[ \iprods{Gx^k, x^k - x^{\star}} - \beta\norms{Gx^k}^2 \big] +   r \norms{x^k - x^{\star}}^2 \vspace{1ex}\\
&& + {~}  \norms{ r( x^k - x^{\star} ) + s (x^{k+1} - x^k) }^2.
\end{array}
}
Then, by using \eqref{eq:VRKM4ME} with $\theta_k = \theta := \frac{s-r-1}{s}$, $\gamma_k = \gamma := \frac{s-r-1}{s-1}$, and  $\eta_k = \eta := \frac{2\beta(s-1)}{s}$  from \eqref{eq:AVFR4SNE_para_update_with_r}, with a similar proof as in Lemma~\ref{le:AVFR4NE_key_estimate}, we can show that
\myeq{eq:AVFR4SNE_lm3_proof2}{
\arraycolsep=0.2em
\begin{array}{lcl}
\Lc_{k-1} - \Expsk{k}{\Lc_k } & = &   (2s  - r - 1) \norms{x^k - x^{k-1}}^2  - s^2 \eta^2 \Expsk{k}{ \norms{e^k}^2 }    \vspace{1ex}\\ 
&& + {~} 4\beta r^2  \big[  \iprods{Gx^{k-1}, x^{k-1} - x^{\star}}  - \beta \norms{Gx^{k-1}}^2 \big]   \vspace{1ex}\\ 
&& + {~}4 \beta(s-1) (s - r -  1) \iprods{Gx^k  - Gx^{k-1}, x^k - x^{k-1}}   \vspace{1ex}\\
&& - {~}  4\beta^2 (s-1)(s - r - 1)   \norms{Gx^k - Gx^{k-1}}^2.
\end{array}
}
Combining the $\frac{1}{L}$-co-coercivity of $G$ from Condition \eqref{eq:co-coerciveness_of_G}, $Gx^{\star} = 0$, and Assumption~\ref{as:A3}, we have
\myeqn{
\arraycolsep=0.2em
\begin{array}{lcl}
 \iprods{Gx^{k-1}, x^{k-1} - x^{\star}}  - \beta \norms{Gx^{k-1}}^2 &\geq & \big(\frac{1}{2L} - \beta\big)\norms{Gx^{k-1}}^2 + \frac{\sigma}{2}\norms{x^{k-1} - x^{\star}}^2.
\end{array}
}
From this inequality and Young's inequality, for any $c \in [0, 1]$, we can show that
\myeqn{
\arraycolsep=0.2em
\begin{array}{lcl}
\Tc_{[1]} &:= & 4\beta r^2\big[ \iprods{Gx^{k-1}, x^{k-1} - x^{\star}}  - \beta \norms{Gx^{k-1}}^2 \big] \vspace{1ex}\\
& \geq &  \frac{(1-c) r}{ s - 1}  4r\beta(s-1)\big[ \iprods{Gx^{k-1}, x^{k-1} - x^{\star}}  - \beta \norms{Gx^{k-1}}^2 \big] \vspace{1ex}\\
&& + {~}  4 c \beta r^2  \big(\frac{1}{2L} - \beta\big)\norms{Gx^{k-1}}^2 + 2c \beta \sigma r^2 \norms{x^{k-1} - x^{\star}}^2 \vspace{1ex}\\
& \geq &  \frac{(1-c) r}{ s - 1} 4 r \beta(s-1) \big[ \iprods{Gx^{k-1}, x^{k-1} - x^{\star}}  - \beta \norms{Gx^{k-1}}^2 \big] \vspace{1ex} \\
&& + {~}  \frac{(1-c) r}{ s - 1} \big[ \norms{r(x^{k-1} - x^{\star}) + s(x^k - x^{k-1})}^2 + r\norms{x^{k-1} - x^{\star}}^2 \big] \vspace{1ex}\\
&& + {~} 4 c \beta r^2 \big(\frac{1}{2L} - \beta\big)\norms{Gx^{k-1}}^2 +  r^2 \big[  2c \beta \sigma  - \frac{(1-c)(2r+1)}{s-1}  \big]  \norms{x^{k-1} - x^{\star}}^2 \vspace{1ex}\\
&& - {~}  \frac{2r s^2(1-c)}{ s - 1}\norms{x^k - x^{k-1}}^2 \vspace{1ex}\\
&\overset{\tiny\eqref{eq:AVFR4SNE_lm3_proof1}}{ = } &  \frac{(1-c) r }{ s - 1} \Lc_{k-1} +  4 c \beta r^2 \big(\frac{1}{2L} - \beta\big)\norms{Gx^{k-1}}^2  -  \frac{2r s^2(1-c)}{ s - 1}\norms{x^k - x^{k-1}}^2 \vspace{1ex}\\
&& + {~} r^2 \big[ 2c \beta \sigma  - \frac{(1-c) (2r  + 1)}{s-1} \big] \norms{x^{k-1} - x^{\star}}^2.
\end{array}
}
On the other hand, from Condition~\eqref{eq:co-coerciveness_of_G} and Young's inequality in $\myeqc{1}$, we also have
\myeqn{
\arraycolsep=0.2em
\begin{array}{lcl}
\iprods{Gx^k - Gx^{k-1}, x^k - x^{k-1}} & \overset{\tiny \eqref{eq:co-coerciveness_of_G} }{ \geq } & \frac{1}{Ln}\sum_{i=1}^n\norms{G_ix^k - G_ix^{k-1}}^2 = \frac{1}{L}U_k, \vspace{0.5ex} \\
- \norms{Gx^k - Gx^{k-1}}^2 & \overset{\tiny\myeqc{1}}{ \geq } & -\frac{1}{n}\sum_{i=1}^n \norms{G_ix^k - G_ix^{k-1}}^2 = -U_k.
\end{array}
}
Substituting $\Tc_{[1]}$ and the last two inequalities into \eqref{eq:AVFR4SNE_lm3_proof2}, we obtain
\myeqn{
\arraycolsep=0.2em
\begin{array}{lcl}
\Lc_{k-1} - \Expsk{k}{\Lc_k } & \geq &   \frac{(1-c) r}{ s - 1}  \Lc_{k-1}  + \big[ 2s  - r - 1 -  \frac{2r s^2(1-c)}{s-1} \big] \norms{x^k - x^{k-1}}^2    \vspace{1ex}\\ 
&&  +  {~}  r^2 \big[ 2c \beta \sigma  - \frac{(1-c) (2r  + 1)}{s-1} \big] \norms{x^{k-1} - x^{\star}}^2   - s^2 \eta^2 \Expsk{k}{ \norms{e^k}^2 }\vspace{1ex}\\
&& + {~} 4\beta(s-1) (s - r - 1)  \big( \frac{1}{L} -  \beta  \big) U_k  + 4c \beta r^2  \big(\frac{1}{2L} - \beta\big)\norms{Gx^{k-1}}^2.
\end{array}
}
We now impose the following two conditions:
\myeq{eq:AVFR4SNE_lm3_proof4}{
(s-1)(2s  - r - 1) -  2r s^2(1-c)   \geq 0, \quad \text{and} \quad  2c \beta\sigma (s-1)  - (1 - c)(2r + 1) \geq 0.
}
Clearly, if  we choose $c := \frac{2r + 1}{2r + 1 + 2\beta\sigma (s-1)}$, then the second condition of \eqref{eq:AVFR4SNE_lm3_proof4} holds with equality, while the first one becomes
\myeqn{
\arraycolsep=0.2em
\begin{array}{lll}
\beta\sigma \leq \frac{(2r + 1)(2s-r-1)}{4r s^2 - 2(s-1)(2s-r-1)},
\end{array}
}
provided that $2rs^2 - (s-1)(2s-r-1) > 0$.
This is the first condition of \eqref{eq:AVFR4SNE_para_condition2}.

Under the condition \eqref{eq:AVFR4SNE_para_condition2}, if we denote $\omega :=  \frac{(1-c) r }{ s - 1}  = \frac{2r \beta\sigma}{2r + 1 + 2\beta\sigma(s-1)}$, then  the last inequality reduces to the following one
\myeq{eq:AVFR4SNE_lm3_proof6}{
\hspace{-2ex}
\arraycolsep=0.2em
\begin{array}{lcl}
(1-\omega) \Lc_{k-1} - \Expsk{k}{\Lc_k } & \geq &  4\beta(s-1) (s - r -  1)  \big( \frac{1}{L} -  \beta  \big) U_k - 4\beta^2(s-1)^2  \Expsk{k}{ \norms{e^k}^2 } \vspace{0.5ex}\\
&& + {~} \frac{4\beta r^2(2r+1)}{2r+1 +  2\beta\sigma(s-1)} \big(\frac{1}{2L} - \beta\big)\norms{Gx^{k-1}}^2.
\end{array}
\hspace{-4ex}
}
Next, from the second condition of \eqref{eq:AVFR4SNE_para_condition2}, we have $0 < \omega < \rho$.
Then,  from \eqref{eq:ub_SG_estimator}, using $\gamma_k = \gamma > 0$ and $\rho \in (0, 1)$, we can show that
\myeqn{
\arraycolsep=0.2em 
\left\{\begin{array}{llcl}
& \Expsn{k}{ \norms{ e^k}^2 }  & \leq &  \Expsk{k}{ \Delta_k },   \vspace{1ex}\\ 
& \Expsn{k}{ \Delta_k } & \leq &  \frac{1 - \rho}{\rho - \omega} \big[ (1 - \omega) \Delta_{k-1}  - \Expsn{k}{\Delta_k} \big] +  \frac{\hat{\Theta}}{\rho - \omega}\big[ (1 - \omega) U_{k-1} - U_k \big] + \frac{(1 - \omega)\Theta + \hat{\Theta}}{\rho - \omega }  U_k.
\end{array}\right.
}
Substituting these inequalities into \eqref{eq:AVFR4SNE_lm3_proof6}, we get
\myeqn{
\hspace{-2ex}
\arraycolsep=0.2em
\begin{array}{lcl}
(1-\omega) \Lc_{k-1} - \Expsn{k}{\Lc_k } & \geq &   \frac{4\beta r^2(2r+1)}{2r+1 +  2\beta\sigma(s-1)} \big(\frac{1}{2L} - \beta\big) \norms{Gx^{k-1}}^2 \\
&& - {~}     \frac{4 \beta^2(s-1)^2(1 - \rho )}{\rho - \omega} \big[ (1 - \omega) \Delta_{k-1}  - \Expsn{k}{ \Delta_k } \big]   \vspace{1ex} \\
&& - {~} \frac{4  \beta^2(s-1)^2\hat{\Theta}}{\rho - \omega}\big[ (1 - \omega) U_{k-1} - U_k \big] \vspace{1ex} \\
&& + {~} 4\beta(s-1)(s - r-1) \Big[  \frac{1}{L} -  \left( 1   + \frac{(s-1)[(1 - \omega)\Theta + \hat{\Theta}]}{(s-r-1)(\rho - \omega) } \right)  \beta \Big]  U_k.
\end{array}
\hspace{-4ex}
}
Rearranging this inequality, and using $\Ec_k$ from \eqref{eq:AVFR4SNE_Lfunc} and $\omega \in (0, 1)$,  we obtain \eqref{eq:AVFR4SNE_key_estimate}.
\Eproof
\endproof

\vspace{1ex}
\beforesubsec
\subsection*{\hspace{-3ex}\textbf{The proof of Corollary~\ref{co:complexity_for_AVFR4SNE}.}}\label{apdx:subsec:proof_of_Corollary4}
\aftersubsec
Since $\beta := \bar{\beta}$ defined by \eqref{eq:AVFR4SNE_par_conditions}, we have 
\myeq{eq:AVFR4SNE_co10_proof1}{
\arraycolsep=0.2em
\begin{array}{lll}
\frac{1}{\omega} = 2 + \frac{3}{2\beta\sigma} \geq 2 + \max\set{3\kappa, \frac{1-2\rho}{\rho}, \frac{N + \sqrt{N^2 + 12\rho M}}{4\rho} } \geq 2 + \frac{N + \sqrt{N^2 + 12\rho M}}{4\rho}.
\end{array}
}
For the SVRG estimator \eqref{eq:SVRG_estimator} with $\mbf{p} = \frac{1}{n^{1/3}}$ and $b = \lfloor n^{2/3}\rfloor$, we have $\Gamma \leq \frac{\mbf{p}}{2} + \frac{16}{b\mbf{p}} = \frac{33}{2n^{1/3}}$.
If $n \geq 17^3$, then $b\mbf{p} \geq n^{1/3} \geq \frac{16}{1-\mbf{p}}$.
Hence, we have $\rho M = 2\rho(2\Gamma-1)\kappa \leq \frac{[  16 - b\mbf{p}(1- \mbf{p}) ] \kappa }{b} \leq 0$.
In this case,  we can show that 
\myeqn{
\arraycolsep=0.2em
\begin{array}{lcl}
\frac{N + \sqrt{N^2 + 12\rho M}}{4\rho} & \leq &  \frac{N}{2\rho}  =  \frac{3\Gamma\kappa + 2(1-2\rho)}{2\rho} \leq \frac{3(b\mbf{p}^2 + 16)\kappa}{2b\mbf{p}^2} + \frac{2(1-\mbf{p})}{\mbf{p}} \leq \frac{51\kappa}{2} + 2n^{1/3}.
\end{array}
}
Using this bound into \eqref{eq:AVFR4SNE_co10_proof1} and noting that $k \geq \BigOs{ \frac{1}{\omega}\log(\frac{1}{\epsilon})}$, we get $k = \BigOs{(\kappa + n^{1/3}) \log(\frac{1}{\epsilon})}$.
The expected total oracle complexity of \eqref{eq:VRKM4ME} is
\myeqn{
\arraycolsep=0.2em
\begin{array}{lcl}
\Expn{ \Tc_{G_i}} = n + (n\mbf{p} + 3b)k = n + 4n^{2/3}k = \BigOs{n + \max\sets{n, n^{2/3}\kappa} \log(\frac{1}{\epsilon})}.
\end{array}
}
For  the SAGA estimator \eqref{eq:SAGA_estimator} with $b = \lfloor n^{2/3} \rfloor$, we also have $\Gamma = \rho + 2(\Theta + \hat{\Theta}) \leq \frac{33}{2n^{1/3}}$ and $\rho = \frac{b}{2n} = \frac{1}{2n^{1/3}}$.
Hence, the remaining proof of this case is similar to the proof of the SVRG one, and we omit it here.
\Eproof
\end{APPENDICES}


\bibliographystyle{informs2014} 

\end{document}